\documentclass[11pt]{amsart}

\usepackage{amsmath,amsthm,amssymb,amsfonts,mathtools}
\usepackage{enumitem}
\usepackage[colorlinks=true,linkcolor=blue,citecolor=green,
  urlcolor=blue]{hyperref}
\hypersetup{
  pdftitle={Twisted Diophantine Approximation I: Asymptotic Theory},
  pdfauthor={Taehyeong Kim and Vasiliy Neckrasov},
  pdfkeywords={twisted Diophantine approximation, shrinking targets,
    parametric geometry of numbers, successive minima, Hausdorff measure}
}

\setenumerate{leftmargin=*}
\setitemize{leftmargin=*}
\allowdisplaybreaks
\numberwithin{equation}{section}

\newtheorem{thm}{Theorem}[section]
\newtheorem{prop}[thm]{Proposition}
\newtheorem{lem}[thm]{Lemma}
\newtheorem{cor}[thm]{Corollary}
\newtheorem*{kurzweilthm}{Kurzweil's Theorem}
\newtheorem*{fuchskimthm}{Fuchs--Kim Theorem}
\newtheorem*{conjecture}{Conjecture}
\theoremstyle{definition}

\newtheorem*{question}{Question}
\theoremstyle{remark}
\newtheorem{rem}[thm]{Remark}

\DeclareMathOperator{\rank}{rank}
\newcommand{\Mat}{\mathrm{Mat}}
\newcommand{\spanR}{\operatorname{span}_{\mathbb R}}

\newcommand{\R}{\mathbb R}
\newcommand{\Z}{\mathbb Z}
\newcommand{\T}{\mathbb T}

\newcommand{\LL}{\Lambda}
\newcommand{\bq}{\mathbf q}
\newcommand{\bp}{\mathbf p}
\newcommand{\bb}{\mathbf b}
\newcommand{\bx}{\mathbf x}

\newcommand{\bv}{\mathbf v}
\newcommand{\bs}{\mathbf s}
\newcommand{\ba}{\mathbf a}
\newcommand{\norm}[1]{\left|#1\right|}
\newcommand{\set}[1]{\left\{#1\right\}}

\newcommand{\eps}{\varepsilon}
\newcommand{\Bad}{\operatorname{Bad}}
\DeclareRobustCommand{\Dmark}{%
  \texorpdfstring{\textbf{\textup{(D)}}}{(D)}}
\DeclareRobustCommand{\Dcond}{\eqref{eq:D}}

\renewcommand{\le}{\leqslant}
\renewcommand{\ge}{\geqslant}

\title[Twisted Diophantine Approximation I]
{Twisted Diophantine Approximation I:\ Asymptotic Theory}
\author{Taehyeong Kim}
\address{Department of Mathematics, Brandeis University,
Waltham, MA 02453, USA}
\email{taehyeongkim@brandeis.edu}
\author{Vasiliy Neckrasov}
\address{Department of Mathematics, Brandeis University,
Waltham, MA 02453, USA}
\email{vneckrasov@brandeis.edu}
\subjclass[2020]{11J83, 11J20}

\begin{document}

\begin{abstract}
We establish an exact zero--one law for twisted Diophantine approximation with an arbitrary fixed real matrix and a positive non-increasing approximation function satisfying dyadic regularity. 
The criterion is the convergence or divergence of a dyadic series whose summands are Khintchine--Groshev block volumes divided by homogeneous lattice-point counts. An equivalent formulation is using all successive minima of the associated diagonal lattice trajectory, capturing homogeneous clustering in every direction. 
We also obtain an almost-sure zero--infinity law for inhomogeneous
$\psi$-Lagrange constants, separate Hausdorff-measure criteria,
and dimension results.
Within this regularity class, our theorem recovers the Kurzweil and Fuchs--Kim criteria. At the critical exponent, this series determines whether the set of badly approximable shifts has zero or full Lebesgue measure.
\end{abstract}

\maketitle

\tableofcontents

\section{Introduction}

\subsection{Background and motivation}

Fix positive integers \(m,n\), and set \(\R_+=(0,\infty)\).  Write
\(\Mat_{m,n}\) for the real \(m\times n\) matrices and
\(\T^m=\R^m/\Z^m\).  We use the supremum
norm in every finite-dimensional vector space and write
\[
 \norm{\bx}_{\Z^m}:=\min_{\bp\in\Z^m}\norm{\bx-\bp}.
\]
For \(A\in\Mat_{m,n}\), \(\bb\in\T^m\), and a non-increasing function
\(\psi:\R_+\to\R_+\), the basic asymptotic inhomogeneous Diophantine
approximation problem asks whether
\begin{equation}\label{eq:psi-approximable}
 \norm{A\bq-\bb}_{\Z^m}<\psi(\norm{\bq})
\end{equation}
for infinitely many distinct \(\bq\in\Z^n\setminus\{0\}\).  When this
occurs, the pair \((A,\bb)\) is said to be
\emph{\(\psi\)-approximable}.

Fixing \(\bb\) and varying \(A\) in \eqref{eq:psi-approximable} gives
the classical inhomogeneous theory; \(\bb=\mathbf0\) is the homogeneous
specialization.  For fixed \(\bb\), the inhomogeneous
Khintchine--Groshev theorem says that the \(\psi\)-approximable matrices
form a null set in the convergence case and a conull set in the
divergence case of either of the following series \cite{Spr79}:
\begin{equation}\label{eq:KG-volume}
 \sum_{q=1}^{\infty}q^{n-1}\psi(q)^m,
 \quad\text{equivalently,}\quad
 \sum_{k=0}^{\infty}2^{kn}\psi(2^k)^m.
\end{equation}

This paper studies \emph{twisted Diophantine approximation}, where
\(A\) is fixed and \(\bb\) varies.  We write
\[
  W_A(\psi)
 :=\set{\bb\in\T^m:(A,\bb)\text{ is \(\psi\)-approximable}}
\]
for the set of \(\psi\)-approximable targets.  Here the arithmetic of
the orbit \(A\Z^n\) in \(\T^m\) becomes decisive.  Write \(\mathcal L^m\)
for normalized Lebesgue measure on \(\T^m\).  A matrix
\(A\in\Mat_{m,n}\) is said to be \emph{badly approximable} if
\[
 \inf_{\mathbf0\ne\bq\in\Z^n}
   \norm{\bq}^{n/m}\norm{A\bq}_{\Z^m}>0,
\]
and Kurzweil proved the following zero--one law \cite{Kur55}.

\begin{kurzweilthm}
If \(A\in\Mat_{m,n}\) is badly approximable, then, for every
non-increasing \(\psi:\R_+\to\R_+\),
\[
 \mathcal L^m( W_A(\psi))=
 \begin{cases}
  0&\displaystyle\text{if }
     \sum_{q=1}^{\infty}q^{n-1}\psi(q)^m<\infty,\\[6pt]
  1&\displaystyle\text{if }
     \sum_{q=1}^{\infty}q^{n-1}\psi(q)^m=\infty.
 \end{cases}
\]
\end{kurzweilthm}

Kurzweil also proved the converse: \(A\) is badly approximable if and
only if the preceding zero--one law holds for every non-increasing
\(\psi:\R_+\to\R_+\).

Kurzweil's theorem naturally raises the following question.

\begin{question}
If the assumption that \(A\) is badly approximable is relaxed, what
should replace the series above in the zero--one law?
\end{question}

In the one-dimensional case \(m=n=1\), this question was answered
completely by Fuchs and Kim \cite[Theorem~1.2]{FK16}.

\begin{fuchskimthm}
Let \(\alpha\) be irrational with convergents
\(p_k/q_k\), and let \(\psi:\R_+\to\R_+\) be non-increasing.  Then
\[
 \mathcal L^1( W_\alpha(\psi))=
 \begin{cases}
  0&\displaystyle\text{if }
     \sum_{k=0}^{\infty}\ \sum_{q=q_k}^{q_{k+1}-1}
     \min\{\psi(q),\norm{q_k\alpha}_{\Z}\}<\infty,\\[7pt]
  1&\displaystyle\text{if }
     \sum_{k=0}^{\infty}\ \sum_{q=q_k}^{q_{k+1}-1}
     \min\{\psi(q),\norm{q_k\alpha}_{\Z}\}=\infty.
 \end{cases}
\]
\end{fuchskimthm}

To state the higher-dimensional results, recall that a matrix
\(A\in\Mat_{m,n}\) is called \emph{singular} if, for every \(\eps>0\) and
all sufficiently large \(Q\), there is \(\bq\in\Z^n\) such that
\[
 0<\norm{\bq}\le Q,
 \qquad
 \norm{A\bq}_{\Z^m}<\eps Q^{-n/m}.
\]
Otherwise, \(A\) is called \emph{nonsingular}.
Note that in dimension one, nonsingularity is equivalent to irrationality.

For nonsingular matrices, the first-named author proved a sufficient
condition for full measure and posed the corresponding zero--one problem
\cite[Theorem~1.3 and Remark~1.5(2)]{Kim24}.  Hussain and Ward extended
this result to the weighted setting
\cite[Theorem~1]{HW24}; see Section~\ref{sec:weighted}.  Moshchevitin and
the second-named author obtained further sufficient conditions for full
and zero measure in terms of Diophantine properties of \(A^{\mathsf T}\)
\cite[Theorem~1.4(b),(c)]{MN25}.
These results go beyond the badly
approximable case, but do not give an exact necessary-and-sufficient
criterion for every fixed matrix.  In a related direction,
Hauke \cite{Hau26} proves Kurzweil-type theorems along
the primes and related sequences.
Twisted approximation on nondegenerate target manifolds is studied
in \cite{BSV26}.

Our main theorem answers this question for every fixed \(A\), provided
\(\psi\) is dyadically regular.  It gives an exact series by correcting
the target volume at each dyadic scale for \(A\)-dependent clustering.

\subsection{The main theorem}

For \(r,T>0\), define
\begin{align*}
 N_A(r,T)
 &:=\#\set{(\bp,\bq)\in\Z^m\times\Z^n:
       \norm{A\bq-\bp}\le r,\ \norm{\bq}\le T},
 \\*
 M_A(r,T)
 &:=\frac{r^mT^n}{N_A(r,T)}.
\end{align*}
The zero vector is included in \(N_A(r,T)\), so \(M_A(r,T)\) is always
defined.

\medskip
\noindent\textbf{Dyadic regularity \Dmark{}.}
Following the regularity terminology of
\cite[\S3, equation~(8)]{BDV06}, we say that \(\psi\) is
\emph{dyadically regular} if there exist \(\varrho\in(0,1)\) and
\(k_0\ge0\) such that
\begin{equation*}\tag{\textbf{D}}\label{eq:D}
 \psi(2^{k+1})\le \varrho\,\psi(2^k),
 \qquad k\ge k_0.
\end{equation*}
This condition is mild; for example, \(\psi_v(q):=q^{-v}\) satisfies it
for every \(v>0\).

Our main result is the following asymptotic zero--one law.

\begin{thm}\label{thm:main}
Let \(A\in\Mat_{m,n}\) be arbitrary, and let
\(\psi:\R_+\to\R_+\) be non-increasing and satisfy \Dcond{}.  Then
\begin{equation}\label{eq:main-mass}
 \mathcal L^m( W_A(\psi))=
 \begin{cases}
  0&\displaystyle\text{if }
     \sum_{k=0}^{\infty}M_A(\psi(2^k),2^k)<\infty,\\[6pt]
  1&\displaystyle\text{if }
     \sum_{k=0}^{\infty}M_A(\psi(2^k),2^k)=\infty.
 \end{cases}
\end{equation}
\end{thm}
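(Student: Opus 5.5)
We work with the dyadic target sets
\[
E_k:=\bigcup_{\substack{\bq\in\Z^n\\ 2^k\le\norm{\bq}<2^{k+1}}}\bigl(A\bq+B(\psi(2^k))\bigr)\subset\T^m ,
\]
where \(B(r)\) denotes the sup-norm ball of radius \(r\). Because \(\psi\) is non-increasing, \(W_A(\psi)\subseteq\limsup_k E_k\). Conversely, by \Dcond{} the radii \(\psi(2^{k+1})\) and \(\psi(2^k)\) differ by at most the fixed factor \(\varrho\), so \(\limsup_k E_k'\subseteq W_A(\psi)\) for the analogous sets \(E_k'\) built with radius \(\psi(2^{k+1})\). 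Replacing \(\psi\) by \(\varrho\psi\) changes each term \(M_A(\psi(2^k),2^k)\) by at most a constant factor; this is the standard comparability of \(N_A(cr,T)\) with \(N_A(r,T)\), obtained by covering boxes with translates. Hence it suffices to prove \(\sum_k\mathcal L^m(E_k)<\infty\) or \(=\infty\) according as \(\sum_k M_A<\infty\) or \(=\infty\), together with a quasi-independence estimate for the divergence half.

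\textbf{Step 1 (measure of one block).} We aim for
\[
\mathcal L^m(E_k)\asymp\min\{1,M_A(\psi(2^k),2^k)\}.
\]
Set \(r=\psi(2^k)\) and \(T=2^k\).

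For the upper bound, a point \(\bb\in E_k\) lies in \(A\bq+B(r)\) for some \(\bq\) in the block. If \(\bb\) lies in \(\ell\) of these balls, then the pairwise differences of the corresponding \(\bq\) give \(\ell\) points \(\bq-\bq_0\) with \(\norm{\bq-\bq_0}\le 2T\) and \(\norm{A(\bq-\bq_0)}_{\Z^m}\le 2r\).

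A cleaner route is Cauchy–Schwarz. Let \(f=\sum_{\bq}\mathbf 1_{A\bq+B(r)}\). Then \(\int f\asymp r^mT^n\), and \(\int f^2\) counts pairs \(\bq,\bq'\) with \(\norm{A(\bq-\bq')}_{\Z^m}\le 2r\), weighted by \(r^m\). This gives
\[
\int f^2\ll r^mT^n\,N_A(2r,2T)\asymp r^mT^n\,N_A(r,T),
\]
and therefore
\[
\mathcal L^m(\operatorname{supp}f)\ge\frac{(\int f)^2}{\int f^2}\gg M_A(r,T).
\]
So the lower bound comes for free.

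For the upper bound we need the reverse inequality, namely that most \(\bq\) do not sit in sparse clusters. For this, partition the \(\bq\)-box into translates of the "cluster box" \(\{\bq:\norm{\bq}\le T,\ \norm{A\bq}_{\Z^m}\le r\}\). Its lattice points form the set counted by \(N_A(r,T)\). The images of all points in one coset family \(\bq_0+S\) (where \(S\) is that set) lie within \(2r\) of each other. More precisely, we use a Minkowski/Davenport lattice-point argument on the unimodular lattice
\[
\Lambda=\begin{pmatrix}I_m&A\\0&I_n\end{pmatrix}\Z^{m+n}.
\]
The box \([-r,r]^m\times[-T,T]^n\) contains \(N_A(r,T)\) points. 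A counting lemma then shows that any box of this shape contains at least \(\gg N_A(r,T)\) points whenever it contains one, unless it is degenerate, and that the \(\bq\)-block can be covered by \(\ll T^n/N_A(r,T)\) cells, each mapping into a ball of radius \(\ll r\). This yields
\[
\mathcal L^m(E_k)\ll r^m T^n/N_A\asymp M_A .
\]
This is the step I expect to be the main obstacle: making the covering uniform when \(\Lambda\) has very unbalanced successive minima. I would try first to handle it via the successive minima \(\lambda_i(g_t\Lambda)\) of the diagonal flow \(g_t=\mathrm{diag}(e^{t/m},\dots,e^{-t/n})\). This is where \(N_A\asymp\prod_i\max(1,\lambda_i^{-1})\), up to rescaling the box to a cube, which is the equivalent formulation the abstract mentions. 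It also uses the fact that a lattice's points in a convex body are controlled by the product of those successive minima that are below 1.

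\textbf{Step 2 (convergence).} If \(\sum M_A<\infty\), Step 1 gives \(\sum\mathcal L^m(E_k)<\infty\), and Borel–Cantelli gives \(\mathcal L^m(W_A(\psi))=0\).

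\textbf{Step 3 (divergence).} We show quasi-independence on average,
\[
\sum_{k,l\le K}\mathcal L^m(E_k\cap E_l)\ll\Bigl(\sum_{k\le K}\mathcal L^m(E_k)\Bigr)^2 .
\]
Then the Kochen–Stone/Erdős–Rényi lemma gives \(\mathcal L^m(\limsup E_k)>0\). To get measure 1, we run the same estimate localized to an arbitrary ball \(B\subset\T^m\) to obtain \(\mathcal L^m(\limsup E_k\cap B)\gg\mathcal L^m(B)\), and then apply Lebesgue density. Alternatively, we can invoke ergodicity: \(\limsup E_k\) is invariant up to null sets under translation by \(A\Z^n+\Z^m\). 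This argument needs modification when \(A\Z^n\) is not dense, where we restrict to the closed subgroup.

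For \(k<l\), we bound the overlap by expanding the indicator sums as in Step 1. We have
\[
\int f_kf_l\ll\#\{(\bq,\bq'):\norm{A(\bq'-\bq)}_{\Z^m}\le 2\psi(2^k)\}\,\psi(2^l)^m .
\]
Using equidistribution of the block \(\{A\bq'\}\) at scale \(\psi(2^k)\) (again a lattice-point count for \(\Lambda\)), this is \(\ll\mathcal L^m(E_k)\,\mathcal L^m(E_l)\), plus a term that \Dcond{} makes geometrically summable in \(l-k\). We pass from \(\int f\) to \(\mathcal L^m(E_k)\) using the clustering structure from Step 1: replace \(f_k\) by its normalized version \(f_k/N_A\), so that \(\int f_k/N_A\asymp M_A\). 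Handling the case where \(\sum M_A\) diverges only through the saturated terms \(M_A\ge1\) is trivial, since then infinitely many \(E_k\) have measure \(\asymp1\) with the same quasi-independence.
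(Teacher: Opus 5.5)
\textbf{Gap in Step 3.} Step 3 is where essentially all of the difficulty of the theorem lies, and it is not proved. Write \(r_k=\psi(2^k)\), \(T_k=2^k\), \(M_k=M_A(r_k,T_k)\), \(S_N=\sum_{k\le N}M_k\).

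Your normalization idea, made precise, does give the right pairwise bound. With \(f_k^+:=\sum_{\norm{\bq}\le 2T_k}\mathbf 1_{B(\langle A\bq\rangle,2r_k)}\) one has \(\mathbf 1_{E_k}\le f_k^+/N_A(r_k,T_k)\). Integrating \(f_k^+f_\ell^+\) then gives \(\mathcal L^m(E_k\cap E_\ell)\ll M_kM_\ell/M_A(r_k,T_\ell)\), which is the paper's mixed-scale lemma.

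What fails is the next claim: that equidistribution at scale \(r_k\) makes this \(\ll M_kM_\ell\) up to an error geometrically summable in \(\ell-k\). You do get \(M_A(r_k,T_\ell)\asymp1\) when the points of \(\LL_A\) in the mixed box \(\mathcal B(r_k,T_\ell)\) span \(\R^d\). But for an arbitrary \(A\) (singular, Liouville-like in some direction, or with rational relations) these points can lie in a proper sublattice, and then \(M_A(r_k,T_\ell)\) can be arbitrarily small. No decay in \(\ell-k\) comes for free. In the paper's residual case the available bound is \(M_kM_\ell/\max\{\Phi_k,\Phi_\ell\}\), which for fixed \(k\) is largest for the \emph{largest} admissible \(\ell\).

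The missing mechanism, which turns these deficits into an \(O(S_N)\) term, has three parts:
\begin{enumerate}
\item Group the pairs by the primitive sublattice \(\Gamma_{k,\ell}=\LL_A\cap\spanR(\LL_A\cap\mathcal B(r_k,T_\ell))\). Full-rank groups have \(M_{k,\ell}\asymp1\) and give \(O(S_N^2)\).
\item For a fixed proper \(\Gamma\), exterior algebra gives \(M_{k,\ell}\asymp\Phi_\Gamma(r_k,T_\ell)\), with the lower bounds \(\Phi_\Gamma(r_k,T_\ell)\ge\Phi_k2^{a_k(\ell-k)}\) and \(\Phi_\Gamma(r_k,T_\ell)\ge\Phi_\ell\varrho^{-b_\ell(\ell-k)}\). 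Here \(\Phi_j=\Phi_\Gamma(r_j,T_j)\), and \(a_j,b_j\) are the exponents of \(T\) and \(r\) in a maximizing component of \(\omega_\Gamma\). These bounds, plus geometric monotonicity of \(\Phi_j\) along endpoints when \(a_k=b_\ell=0\) (this is where (D) enters), charge each pair to \(M_k\) or \(M_\ell\).
\item Nestedness of the mixed boxes shows that each index meets at most \(2d\) groups.
\end{enumerate}
Nothing in your sketch replaces this, so the quasi-independence estimate is unproven.

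\textbf{Smaller points.}
\begin{enumerate}
\item \emph{Condition (D) is one-sided.} It gives \(\psi(2^{k+1})\le\varrho\psi(2^k)\) but no lower bound (take \(\psi(2^k)=2^{-2^k}\)). So passing from \(E_k\) to \(E_k'\) by replacing \(\psi\) with \(\varrho\psi\) is invalid. Dilate in \(T\) instead. Balls of radius \(r_k\) about \(\langle A\bq\rangle\) with \(\norm{\bq}\le T_k\) have a limsup lying in \(W_A(\psi)\) off the countable set \(\{\langle A\bq\rangle\}\). Combined with \(N_A(r,2T)\asymp N_A(r,T)\), this repairs the reduction.
\item \emph{The full-measure upgrade needs more than invariance.} Neither \(W_A(\psi)\) nor your limsup is invariant under \(\bb\mapsto\bb+\langle A\mathbf e_i\rangle\), because the shift changes \(\norm{\bq}\) and hence the admissible radius. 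The paper decomposes \(W_A(\psi)\) by which coordinates of the solutions escape and with which signs. Monotonicity of \(\psi\) then gives one-sided inclusions \(E-\varepsilon_i\langle A\mathbf e_i\rangle\subset E\) on each piece. Pieces whose escaping generators are not dense are placed inside proper closed cosets, hence are null.
\item \emph{The Step 1 upper bound is easy.} Take \(f^+:=\sum_{\norm{\bq}\le 2T}\mathbf 1_{B(\langle A\bq\rangle,2r)}\) with \(r<1/4\). If \(\bb\in B(\langle A\bq_0\rangle,r)\) with \(\norm{\bq_0}\le T\), then every \(\bq'\) counted by \(N_A(r,T)\) gives \(\bb\in B(\langle A(\bq_0+\bq')\rangle,2r)\). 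Hence \(f^+(\bb)\ge N_A(r,T)\) on the union, and integrating \(f^+\) bounds its measure by \(\ll r^mT^n/N_A(r,T)\).
\end{enumerate}
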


\begin{rem}[Volume and clustering]\label{rem:volume-clustering}
The summand in Theorem~\ref{thm:main} has the form
\[
 M_A(\psi(2^k),2^k)
 =\frac{2^{kn}\psi(2^k)^m}
        {N_A(\psi(2^k),2^k)}
 =\frac{\text{Khintchine--Groshev block volume}}
        {\text{homogeneous clustering multiplicity}}.
\]
The numerator is the \(k\)-th term of the dyadic
Khintchine--Groshev series in \eqref{eq:KG-volume}, while the
denominator is the homogeneous clustering multiplicity.
The explicit inclusion of this factor is the principal feature distinguishing
our criterion from the earlier formulations discussed above.
\end{rem}

\begin{rem}[Choice of geometric scale]\label{rem:geometric-scale}
The base \(2\) is inessential.  For any \(\beta>1\),
Theorem~\ref{thm:main} remains valid with \(2^k\) replaced by
\(\beta^k\), provided that
\(\psi(\beta^{k+1})\le\varrho\psi(\beta^k)\) for some
\(\varrho\in(0,1)\) and all sufficiently large \(k\).
\end{rem}

To recast the criterion in the language of parametric geometry of numbers,
we express its summands in terms of the successive minima along a lattice
trajectory associated with \(A\).
Set \(d=m+n\), and define the lattice and flow
\begin{equation*} \LL_A=u_A\Z^d,\qquad
 u_A=\begin{pmatrix}I_m&A\\0&I_n\end{pmatrix},\qquad
 a_t=\begin{pmatrix}
      e^{t/m}I_m&0\\
      0&e^{-t/n}I_n
     \end{pmatrix}.
\end{equation*}
For a lattice \(\LL\subset\R^d\) and \(i=1,\ldots,d\), define the
\(i\)-th successive minimum by
\[
 \lambda_i(\LL)
 :=\inf\set{\rho>0:
   \LL\cap[-\rho,\rho]^d
   \text{ contains \(i\) linearly independent vectors}}.
\]
For \(k\ge0\), define
\begin{equation*} R_k=\bigl(2^{kn}\psi(2^k)^m\bigr)^{1/d},\qquad
 t_k=\frac{mn}{d}\log\frac{2^k}{\psi(2^k)}.
\end{equation*}

The criterion has the following successive-minima form.

\begin{cor}\label{cor:pgn-form}
Uniformly in \(k\),
\begin{equation}\label{eq:pgn-mass}
 M_A(\psi(2^k),2^k)
 \asymp
 \prod_{i=1}^{d}\min\{R_k,\lambda_i(a_{t_k}\LL_A)\}.
\end{equation}
Consequently the zero--one law in Theorem~\ref{thm:main} is determined
by the convergence or divergence of
\begin{equation*} \sum_{k=0}^{\infty}
 \prod_{i=1}^{d}\min\{R_k,\lambda_i(a_{t_k}\LL_A)\}.
\end{equation*}
\end{cor}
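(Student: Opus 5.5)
The plan is to rewrite $N_A(r,T)$ as a lattice-point count in a box for the lattice $a_t\LL_A$, and then use the standard estimate for lattice points in a convex body in terms of successive minima.

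\textbf{Step 1 (renormalization).} With $r=\psi(2^k)$ and $T=2^k$, the vectors $u_A(-\bp,\bq)^{\mathsf T}$ for $(\bp,\bq)\in\Z^d$ are exactly $(A\bq-\bp,\bq)$. Hence $N_A(r,T)$ counts the points of $\LL_A$ in the box $[-r,r]^m\times[-T,T]^n$. Apply $a_t$ with $e^{t/m}r=e^{-t/n}T$, that is, $e^{t(1/m+1/n)}=T/r$. This gives $t=\frac{mn}{d}\log(T/r)=t_k$. The box becomes the cube $[-R,R]^d$ with $R=e^{t/m}r$. Its volume is $(2R)^d=2^d r^mT^n$, because $a_t$ has determinant one. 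So $R^d=r^mT^n$, which matches the definition of $R_k$. Consequently
\[
N_A(\psi(2^k),2^k)=\#\bigl(a_{t_k}\LL_A\cap[-R_k,R_k]^d\bigr),
\qquad
M_A(\psi(2^k),2^k)=\frac{R_k^d}{\#\bigl(a_{t_k}\LL_A\cap[-R_k,R_k]^d\bigr)}.
\]

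\textbf{Step 2 (counting via successive minima).} For a unimodular lattice $\LL\subset\R^d$ and any $R>0$, I will use the classical estimate
\[
\#\bigl(\LL\cap[-R,R]^d\bigr)\asymp_d\prod_{i=1}^d\max\Bigl\{1,\frac{R}{\lambda_i(\LL)}\Bigr\}.
\]
To prove it, take a reduced basis $v_1,\dots,v_d$ with $|v_i|\asymp_d\lambda_i$, which exists by Minkowski or Mahler. For the lower bound, the integer combinations $\sum c_iv_i$ with $|c_i|\le R/(d\,|v_i|)$ lie in the cube, and there are $\gg\prod\max\{1,R/\lambda_i\}$ of them. For the upper bound, I use the near-orthogonality of a reduced basis. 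The coefficients of a lattice point in the cube then satisfy $|c_i|\ll R/\lambda_i$, and when $R<\lambda_i$ only boundedly many values occur, since a point with $c_i\ne0$ has norm $\gg\lambda_i$. The main obstacle is making this upper bound clean. If we rely on a Minkowski-reduced basis with the bound $|{\det}|\asymp\prod|v_i|$, the coefficient bounds follow from Cramer's rule, via the projection of the point onto the orthogonal complement of the other basis vectors. Alternatively one can cite the known result (Henk, or Betke--Henk--Wills).

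\textbf{Step 3 (algebra).} Minkowski's second theorem gives $\prod\lambda_i\asymp1$. Therefore
\[
\frac{R^d}{\prod_i\max\{1,R/\lambda_i\}}=\prod_i\frac{R}{\max\{1,R/\lambda_i\}}=\prod_i\min\{R,\lambda_i\},
\]
which is exactly \eqref{eq:pgn-mass}. All implied constants depend only on $d$, so the comparison is uniform in $k$. This holds even though $R_k$ may tend to $0$ or $\infty$, since the counting estimate holds for all $R>0$; when $R<\lambda_1$ the count is $1$ and both sides equal $R^d$. Finally, series whose terms are comparable up to constants converge or diverge together. Combining this with Theorem~\ref{thm:main} gives the final statement.
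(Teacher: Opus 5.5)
Your proposal is correct and is essentially the paper's argument: Lemma~\ref{lem:minima-mass} likewise uses $a_t$ to map $\mathcal B(r,T)$ onto $[-R,R]^d$ and applies the successive-minima count \eqref{eq:standard-lattice-count}, then does the same algebra. There the lower bound comes from combinations of minima-realizing vectors and the upper bound is cited from Henk rather than derived from a reduced basis; also, Minkowski's second theorem is not needed in your Step~3, since $R/\max\{1,R/\lambda_i\}=\min\{R,\lambda_i\}$ holds exactly term by term.
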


\begin{proof}
Equation~\eqref{eq:pgn-mass} follows directly from
Lemma~\ref{lem:minima-mass}.
\end{proof}

\begin{rem}[The badly approximable case]\label{rem:kurzweil}
Suppose that \(A\) is badly approximable and that \(\psi\) satisfies
the hypotheses of Theorem~\ref{thm:main}.  By the Dani correspondence
\cite[Theorem~2.20]{Dan85}, the forward orbit
\(\{a_t\LL_A:t\ge0\}\) is bounded in the space of unimodular lattices.
Hence all of its successive minima are bounded above and
away from zero, and
Corollary~\ref{cor:pgn-form} gives, for all sufficiently large \(k\),
\[
 M_A(\psi(2^k),2^k)
 \asymp
 \min\{1,R_k^d\}
 =\min\{1,2^{kn}\psi(2^k)^m\}.
\]
This series converges if and only if the dyadic series in
\eqref{eq:KG-volume} does.  Indeed, both diverge if
\(2^{kn}\psi(2^k)^m\ge1\) for infinitely many \(k\); otherwise their
tails agree.  Thus
Theorem~\ref{thm:main} recovers the
dyadically regular case of Kurzweil's theorem.
\end{rem}

\subsection{Applications}

Part~I develops the main theorem in three directions.

\medskip
\noindent\emph{Lagrange constants and bad targets.}
Let \(\psi:\R_+\to\R_+\) be an approximation function.
We define the \emph{inhomogeneous \(\psi\)-Lagrange constant}
of \(\bb\) with respect to \(A\) by
\begin{equation}\label{eq:affine-constant}
 \mathfrak c_{A,\psi}(\bb)
 :=\liminf_{\norm{\bq}\to\infty}
   \frac{\norm{A\bq-\bb}_{\Z^m}}{\psi(\norm{\bq})}.
\end{equation}
Corollary~\ref{cor:normalized-error} shows that this constant is almost
surely either \(0\) or \(+\infty\), according as the series in
Theorem~\ref{thm:main} diverges or converges.

For
\(\psi_{n/m}(q)=q^{-n/m}\), write
\begin{equation}\label{eq:critical-notation}
 \mathfrak c_A(\bb)
 :=\mathfrak c_{A,\psi_{n/m}}(\bb),
 \qquad
 \Bad_A:=\set{\bb:\mathfrak c_A(\bb)>0}.
\end{equation}
Set
\begin{equation}\label{eq:critical-data}
 \Delta_k(A)
 :=
 \prod_{i=1}^{d}
 \min\left\{1,\lambda_i\left(a_{nk\log2}\LL_A\right)\right\}.
\end{equation}
Proposition~\ref{prop:Bad-classification} gives the following exact
dichotomy for \(\mathcal L^m\)-almost every \(\bb\in\T^m\):
\[
 \mathfrak c_A(\bb)=
 \begin{cases}
  0&\text{if }\sum_k\Delta_k(A)=\infty,\\
  +\infty&\text{if }\sum_k\Delta_k(A)<\infty.
 \end{cases}
\]
Equivalently, \(\Bad_A\) is null in the first case and conull in the
second.  This gives an exact Lebesgue-measure
answer to the fixed-matrix characterization problem raised in
\cite[Remark~1.8(3)]{Kim24}, and to the Lebesgue-measure case of
\cite[Question~1.3]{MRS24}.

For nonsingular \(A\), the fact that \(\Bad_A\) has Lebesgue measure zero was established in
\cite{Kim07,Shapira13,Kim24,Mosh23,BDGW24,MRS24} in their respective settings;
Corollary~\ref{cor:nonsingular} recovers this conclusion in the unweighted
real setting.
For every \(A\), the set \(\Bad_A\) has full Hausdorff dimension \(m\)
\cite{BHKV10,ET11,Mosh11}.

\medskip
\noindent\emph{Hausdorff measure and dimension.}
Assuming \Dcond{}, Corollary~\ref{cor:hausdorff} gives, for \(0<s<m\),
separate sufficient conditions for
\(\mathcal H^s( W_A(\psi))=0\) and for
\(\mathcal H^s( W_A(\psi))=\infty\).  These conditions are not
complementary in general.  In dimension one, Kim--Rams--Wang proved an
exact Jarn\'ik-type dimension formula for every irrational \(\alpha\) and
every non-increasing \(\psi\) \cite{KRW18}.  See
Section~\ref{subsec:hausdorff-gap} for the higher-dimensional problem.

If \(\sum_k\Delta_k(A)=\infty\), then
Corollary~\ref{cor:power-dimension} gives, for every \(v>0\),
\[
 \dim_H W_A(\psi_v)
 =\min\left\{m,\frac nv\right\}.
\]
In particular, it holds for every nonsingular \(A\).  For \(m=n=1\),
it gives the same dimension formula as in the work of Bugeaud and
Schmeling--Troubetzkoy \cite{Bug03,ST03}.

\medskip
\noindent\emph{The Fuchs--Kim criterion.}
When \(m=n=1\) and \(A=\alpha\) is irrational,
Proposition~\ref{prop:scalar} shows that the series in
Theorem~\ref{thm:main} converges if and only if the Fuchs--Kim series
above does.  Thus Theorem~\ref{thm:main} gives the Fuchs--Kim
zero--one law for \(\psi\) satisfying \Dcond{}.

\subsection{Outline of the proof of Theorem~\ref{thm:main}}

We associate each summand in \eqref{eq:main-mass} with a union of
target balls.  Put
\[
 T_k:=2^k,\qquad r_k:=\psi(T_k),\qquad
 M_k:=M_A(r_k,T_k).
\]
For \(\bx\in\R^m\), write \(\langle\bx\rangle=\bx+\Z^m\), and let
\(B(\bb,r)\) denote the open ball of radius \(r\) about
\(\bb\in\T^m\).  At the \(k\)-th scale, consider the target balls
\[
 \set{B(\langle A\bq\rangle,r_k):
       \bq\in\Z^n,\ \norm{\bq}\le T_k}.
\]
Let \(\mathcal G_k\) be a pairwise disjoint Vitali subfamily and set
\(E_k:=\bigcup_{B\in\mathcal G_k}B\).  The lattice-counting estimates
in Lemma~\ref{lem:cumulative-mass} give, for all sufficiently large
\(k\),
\begin{equation*} \#\mathcal G_k\asymp
 \frac{T_k^n}{N_A(r_k,T_k)},
 \qquad
 \mathcal L^m(E_k)
 \asymp
 \frac{r_k^mT_k^n}{N_A(r_k,T_k)}
 =M_k.
\end{equation*}
Thus \(M_k\), the summand in Theorem~\ref{thm:main}, is comparable to
the measure of the selected union at the \(k\)-th scale;
\(N_A(r_k,T_k)\) records clustering among its centers.

For the convergence part, define
\[
 F_k:=
 \bigcup_{\substack{\bq\in\Z^n\\ \norm{\bq}\le T_{k+1}}}
 B(\langle A\bq\rangle,r_k).
\]
By monotonicity and Lemmata~\ref{lem:cumulative-mass}
and \ref{lem:dilation},
\[
  W_A(\psi)\subset\limsup_{k\to\infty}F_k,
 \qquad
 \mathcal L^m(F_k)\asymp M_k.
\]
Thus \(\sum_kM_k<\infty\) implies
\(\mathcal L^m( W_A(\psi))=0\) by the first Borel--Cantelli lemma.

Now suppose that \(\sum_kM_k=\infty\), and put
\(S_N:=\sum_{k=1}^NM_k\).  By the fixed-matrix zero--one law,
Proposition~\ref{prop:zero-one}, it suffices to prove
\(\mathcal L^m( W_A(\psi))>0\).
Following the approach of Beresnevich and Velani
\cite[Lemma~2 and Theorem~3]{BV10}, we prove the
quasi-independence-on-average estimate
\begin{equation}\label{eq:intro-QIA}
 \sum_{k,\ell=1}^N\mathcal L^m(E_k\cap E_\ell)
 \ll S_N^2+S_N.
\end{equation}
Since \(\mathcal L^m(E_k)\asymp M_k\), the divergent
Borel--Cantelli lemma gives
\[
 \mathcal L^m(\limsup_{k\to\infty}E_k)
 \ge
 \limsup_{N\to\infty}
 \frac{\bigl(\sum_{k=1}^N\mathcal L^m(E_k)\bigr)^2}
      {\sum_{k,\ell=1}^N\mathcal L^m(E_k\cap E_\ell)}
 >0.
\]
Moreover,
\[
 \left(\limsup_{k\to\infty}E_k\right)
 \setminus\{\langle A\bq\rangle:\bq\in\Z^n\}
 \subset W_A(\psi).
\]
Hence Proposition~\ref{prop:zero-one} gives
\(\mathcal L^m( W_A(\psi))=1\).

It remains to prove \eqref{eq:intro-QIA}.  For \(k<\ell\), put
\(M_{k,\ell}:=M_A(r_k,T_\ell)\).  The localized packing estimate of
Section~\ref{sec:mixed} gives
\[
 \mathcal L^m(E_k\cap E_\ell)
 \ll\frac{M_kM_\ell}{M_{k,\ell}}.
\]
Section~\ref{sec:modules} proves
\[
 \sum_{1\le k<\ell\le N}
 \frac{M_kM_\ell}{M_{k,\ell}}
 \ll S_N^2+S_N.
\]
The proof groups the pairs \((k,\ell)\) by the primitive
sublattice of \(\LL_A\) spanned by the lattice points counted by
\(N_A(r_k,T_\ell)\).  Exterior-algebra estimates give
\(M_{k,\ell}\asymp1\) when this sublattice has full rank, so those pairs
contribute \(O(S_N^2)\).  For proper sublattices, the same estimates,
together with the geometric growth of \(T_k\) and the decay in
\Dcond{}, give \(O(S_N)\).  This proves \eqref{eq:intro-QIA}.

\medskip
\noindent\textbf{Companion paper.}
The companion paper~\cite{KNU26} studies uniform twisted approximation
for arbitrary fixed matrices. Under appropriate regularity assumptions,
it gives exact Hausdorff dimension formulae for the intersection and
union of uniform approximation sets over positive multiplicative
constants. For power functions outside an explicit finite set of
possible exceptional exponents, it also determines the dimension for
every fixed positive constant. These formulae extend to continuous,
strictly decreasing functions tending to zero with the same logarithmic
order. The two papers form parts of the same project, with the same
ideas leading to complementary results in the asymptotic and uniform
settings.

\medskip
\noindent\textbf{Organization.}
Part~I develops the applications of Theorem~\ref{thm:main}.
Section~\ref{sec:critical} studies \(\psi\)-Lagrange constants and
bad targets.  Section~\ref{sec:hausdorff} proves the Hausdorff-measure
and dimension results, and Section~\ref{sec:classical} compares our
criterion with that of Fuchs and Kim.  Part~II proves
Theorem~\ref{thm:main}.  Section~\ref{sec:preliminaries} gives the
lattice-counting estimates, and Section~\ref{sec:zero-one} proves the
fixed-matrix zero--one law.  Section~\ref{sec:vitali} establishes the
Vitali estimates, proves convergence, and reduces divergence to
quasi-independence on average.  Section~\ref{sec:mixed} bounds
mixed-scale intersections, and Section~\ref{sec:modules} proves
quasi-independence on average.  Section~\ref{sec:conclusion} discusses
extensions and limitations of the method.

\medskip
\noindent\textbf{Conventions.}
For nonnegative quantities \(X\) and \(Y\), we write \(X\ll Y\) if
\(X\leq CY\) for some constant \(C>0\), write \(X\gg Y\) if \(Y\ll X\),
and write \(X\asymp Y\) if both \(X\ll Y\) and \(Y\ll X\).
Since the dimensions \(m,n\), the matrix \(A\), and the function \(\psi\)
are fixed throughout, dependence on them is suppressed in
\(\ll,\gg,\asymp\), and \(O(\cdot)\).
Dependence on any additional parameters is recorded by subscripts.

\medskip
\noindent\textbf{Use of artificial intelligence.} OpenAI Codex was used as an auxiliary tool in checking selected calculations and arguments and in refining the exposition. The authors independently verified every mathematical argument, determined the final content and wording, and take full responsibility for the manuscript.

\medskip
\noindent\textbf{Acknowledgments.}
The authors are grateful to Dmitry Kleinbock, Uri Shapira, and Nikolay
Moshchevitin for helpful discussions.

\part{Applications}

We apply Theorem~\ref{thm:main} to \(\psi\)-Lagrange constants and
Hausdorff measure and dimension, and compare our series criterion with
the Fuchs--Kim criterion in dimension one.

\section{Inhomogeneous Lagrange constants}\label{sec:critical}

This section studies \(\psi\)-Lagrange constants for a prescribed
approximation function \(\psi\).  We first prove an almost-sure
zero--infinity law in this generality and then specialize it at exponent
\(n/m\), where the criterion has a particularly simple successive-minima
form.

Recall that the \(\psi\)-Lagrange constant \(\mathfrak c_{A,\psi}\)
is defined in \eqref{eq:affine-constant}.  Theorem~\ref{thm:main}
immediately yields the following zero--infinity law.

\begin{cor}
\label{cor:normalized-error}
Under the hypotheses of Theorem~\ref{thm:main}, for
\(\mathcal L^m\)-almost every \(\bb\in\T^m\),
\[
 \mathfrak c_{A,\psi}(\bb)=
 \begin{cases}
  0&\text{if }\sum_kM_A(\psi(2^k),2^k)=\infty,\\
  +\infty&\text{if }\sum_kM_A(\psi(2^k),2^k)<\infty.
 \end{cases}
\]
\end{cor}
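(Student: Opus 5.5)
The plan is to apply Theorem~\ref{thm:main} to the rescaled functions \(c\psi\), where \(c>0\) ranges over a countable set, and then intersect or unite the resulting full- and null-measure sets. First observe that for every constant \(c>0\), the function \(c\psi\) is again non-increasing and satisfies \Dcond{} with the same \(\varrho\) and \(k_0\). So Theorem~\ref{thm:main} applies to \(c\psi\). The key point is that convergence or divergence of the series is unaffected by replacing \(\psi\) with \(c\psi\). To see this, compare \(M_A(c r,T)\) with \(M_A(r,T)\). Since \(r\mapsto N_A(r,T)\) is non-decreasing, for \(c\ge1\) we get
\[
 M_A(cr,T)=\frac{c^mr^mT^n}{N_A(cr,T)}\le c^mM_A(r,T).
\]
For the lower bound we need \(N_A(cr,T)\ll_c N_A(r,T)\). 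This is a standard doubling estimate for lattice-point counts in boxes: cover the box \(\{\norm{\bx}\le cr\}\times\{\norm{\by}\le T\}\) by \(O_c(1)\) translates of the box \(\{\norm{\bx}\le r/2\}\times\{\norm{\by}\le T\}\), and map each translate into the symmetric box by differences. I would either cite Lemma~\ref{lem:dilation} from Part~II, which is presumably exactly this statement, or rederive it in one line. Alternatively, Corollary~\ref{cor:pgn-form} gives the same comparability directly, since \(R_k\) and \(t_k\) change only by bounded factors and shifts. Either way \(M_A(c\psi(2^k),2^k)\asymp_c M_A(\psi(2^k),2^k)\), and the two series converge or diverge together.

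\emph{Divergent case.} For each \(j\ge1\), apply Theorem~\ref{thm:main} to \(\psi/j\). This gives a full-measure set of \(\bb\) for which \(\norm{A\bq-\bb}_{\Z^m}<\psi(\norm{\bq})/j\) for infinitely many distinct \(\bq\). Infinitely many distinct \(\bq\) forces \(\norm{\bq}\to\infty\) along them. Hence \(\mathfrak c_{A,\psi}(\bb)\le 1/j\) on this set. Intersecting over all \(j\) gives \(\mathfrak c_{A,\psi}(\bb)=0\) almost surely.

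\emph{Convergent case.} For each \(j\ge1\), apply Theorem~\ref{thm:main} to \(j\psi\): the set \(W_A(j\psi)\) is null. If \(\bb\notin W_A(j\psi)\), then only finitely many \(\bq\) satisfy \(\norm{A\bq-\bb}_{\Z^m}<j\psi(\norm{\bq})\), so the \(\liminf\) is at least \(j\). Taking the union of these null sets over \(j\), we get \(\mathfrak c_{A,\psi}(\bb)=+\infty\) outside a null set.

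The only step with any content is the dilation comparability \(N_A(cr,T)\ll_cN_A(r,T)\). I expect this to be routine via the covering-and-differences argument above, or to be already available from Lemma~\ref{lem:dilation}.
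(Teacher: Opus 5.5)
Your proposal is correct and is essentially the paper's own proof. Like the paper, you show the series is invariant under \(\psi\mapsto c\psi\) via Lemma~\ref{lem:dilation}, then take a countable intersection of the conull sets \(W_A(j^{-1}\psi)\) in the divergent case and a countable union of the null sets \(W_A(j\psi)\) in the convergent case.

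One small slip in your optional rederivation of the dilation bound: differences of lattice points in a translate of \(\{\norm{\bx}\le r/2\}\times\{\norm{\mathbf y}\le T\}\) only land in \(\mathcal B(r,2T)\), not \(\mathcal B(r,T)\). You should cover by translates of \(\mathcal B(r/2,T/2)\) instead, or simply cite Lemma~\ref{lem:dilation}, which is exactly the needed statement.
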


\begin{proof}
To apply Theorem~\ref{thm:main} to the rescaled functions
\(j^{-1}\psi\) and \(j\psi\) for every \(j\ge1\), first note that its series criterion is
invariant under fixed positive rescaling.  Indeed, \(c\psi\) satisfies
\Dcond{} with the same contraction factor, and for every fixed \(c>0\),
Lemma~\ref{lem:dilation} gives
\begin{equation*} M_A(c\psi(2^k),2^k)\asymp_cM_A(\psi(2^k),2^k)
\end{equation*}
In the divergent case,
Theorem~\ref{thm:main} gives full measure to
\( W_A(j^{-1}\psi)\) for every \(j\ge1\).  Hence their countable intersection
has full measure and is contained in
\(\{\mathfrak c_{A,\psi}=0\}\), which proves the divergent case.

In the convergent case, Theorem~\ref{thm:main} shows that
\( W_A(j\psi)\) is null for every \(j\ge1\).  Outside
their countable union, for each \(j\ge1\) the inequality
\[
 \norm{A\bq-\bb}_{\Z^m}<j\psi(\norm{\bq})
\]
has only finitely many solutions.  Thus
\(\mathfrak c_{A,\psi}(\bb)\ge j\) for every \(j\), so
\(\{\mathfrak c_{A,\psi}=+\infty\}\) has full measure, proving the
convergent case.
\end{proof}

For \(\psi_{n/m}(q)=q^{-n/m}\), recall the
notation \(\mathfrak c_A\), \(\Bad_A\), and \(\Delta_k(A)\) from
\eqref{eq:critical-notation} and \eqref{eq:critical-data}.  For
\(\eps>0\), set
\begin{align*}
 \Bad_A(\eps)
 &:=\T^m\setminus W_A(\eps\psi_{n/m}),
 \\
 \Bad_A(\infty)
 &:=\set{\bb:\mathfrak c_A(\bb)=+\infty}.
\end{align*}
By \eqref{eq:affine-constant}, these sets satisfy
\[
 \Bad_A=\bigcup_{j\ge1}\Bad_A(1/j),
 \qquad
 \Bad_A(\infty)=\bigcap_{j\ge1}\Bad_A(j).
\]
The following proposition gives the corresponding classification.

\begin{prop}
\label{prop:Bad-classification}
For every \(A\in\Mat_{m,n}\), uniformly in \(k\ge0\),
\begin{equation}\label{eq:critical-equivalence}
 M_A(\psi_{n/m}(2^k),2^k)
 =\frac{1}{N_A(\psi_{n/m}(2^k),2^k)}
 \asymp\Delta_k(A).
\end{equation}
Moreover, for \(\mathcal L^m\)-almost every \(\bb\in\T^m\),
\[
 \mathfrak c_A(\bb)=
 \begin{cases}
  0&\text{if }\sum_k\Delta_k(A)=\infty,\\
  +\infty&\text{if }\sum_k\Delta_k(A)<\infty.
 \end{cases}
\]
Equivalently, for every \(\eps>0\), the sets \(\Bad_A(\eps)\),
\(\Bad_A\), and \(\Bad_A(\infty)\) are all null in the first case and
all conull in the second.
\end{prop}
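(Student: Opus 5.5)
The equality in \eqref{eq:critical-equivalence} is immediate from the definition of \(M_A\). With \(r=\psi_{n/m}(2^k)=2^{-kn/m}\) and \(T=2^k\) we have \(r^mT^n=2^{-kn}2^{kn}=1\), so \(M_A(\psi_{n/m}(2^k),2^k)=1/N_A(\psi_{n/m}(2^k),2^k)\).

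For the comparison with \(\Delta_k(A)\), I will specialize Corollary~\ref{cor:pgn-form}, i.e.\ Lemma~\ref{lem:minima-mass}. For \(\psi=\psi_{n/m}\) the two parameters are
\[
R_k=(2^{kn}2^{-kn})^{1/d}=1,
\qquad
t_k=\frac{mn}{d}\log\bigl(2^{k}2^{kn/m}\bigr)=\frac{mn}{d}\cdot\frac{d}{m}\,k\log 2=nk\log2 .
\]
Then the product \(\prod_i\min\{R_k,\lambda_i(a_{t_k}\LL_A)\}\) is exactly \(\Delta_k(A)\). The implied constants in that corollary are uniform in \(k\) and depend only on \(m,n,A\) and \(\psi\), and \(\psi\) is fixed here. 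This gives \(M_A\asymp\Delta_k(A)\) uniformly in \(k\ge0\). I also need \(\psi_{n/m}\) to satisfy \Dcond{}, which holds with \(\varrho=2^{-n/m}\). It is used in the next step as a hypothesis of Theorem~\ref{thm:main}, and possibly by the lemma.

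The almost-sure dichotomy then follows from Corollary~\ref{cor:normalized-error} applied to \(\psi=\psi_{n/m}\), which is non-increasing and dyadically regular. Its series \(\sum_k M_A(\psi_{n/m}(2^k),2^k)\) converges if and only if \(\sum_k\Delta_k(A)\) does, by the comparison just proved.

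For the statements about the sets I will argue case by case.
\begin{itemize}
\item In the divergent case, the rescaling \(\eps\psi_{n/m}\) still satisfies \Dcond{}, and by Lemma~\ref{lem:dilation} its series is comparable to the original one, so it also diverges. Theorem~\ref{thm:main} then makes \(W_A(\eps\psi_{n/m})\) conull, so \(\Bad_A(\eps)\) is null for every \(\eps>0\). Since \(\Bad_A=\bigcup_j\Bad_A(1/j)\) and \(\Bad_A(\infty)\subset\Bad_A\), both of these are null as well.
\item In the convergent case, the same rescaling argument makes each \(W_A(\eps\psi_{n/m})\) null, so each \(\Bad_A(\eps)\) is conull. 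The countable intersection \(\Bad_A(\infty)=\bigcap_j\Bad_A(j)\) is then conull, and \(\Bad_A\supset\Bad_A(\infty)\) is conull too.
\end{itemize}

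The only step with real content is the identification \(R_k=1\), \(t_k=nk\log2\). It relies on Lemma~\ref{lem:minima-mass} holding with constants uniform in \(k\) for this particular \(\psi\). Everything else is bookkeeping.
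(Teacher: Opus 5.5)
Your proposal is correct and follows the paper's proof. Both compute \(r^mT^n=1\), \(R_k=1\), \(t_k=nk\log2\), specialize Corollary~\ref{cor:pgn-form} (i.e.\ Lemma~\ref{lem:minima-mass}, whose constants depend only on \(d\), so no regularity of \(\psi\) is needed there), and then apply Corollary~\ref{cor:normalized-error}. The only difference is cosmetic: for the statements about \(\Bad_A(\eps)\), \(\Bad_A\) and \(\Bad_A(\infty)\) you rerun Theorem~\ref{thm:main} on each rescaling \(\eps\psi_{n/m}\), whereas the paper reads them off directly from the almost-sure value of \(\mathfrak c_A\).
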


\begin{proof}
Take \(\psi=\psi_{n/m}\).  Then
\[
 \psi_{n/m}(2^{k+1})=2^{-n/m}\psi_{n/m}(2^k),
\]
so \Dcond{} holds with contraction factor \(2^{-n/m}\).  Moreover,
\(R_k=1\) and \(t_k=nk\log2\).  The definition of \(M_A\) and
Corollary~\ref{cor:pgn-form} give \eqref{eq:critical-equivalence}.
The conclusion about \(\mathfrak c_A\) follows from
Corollary~\ref{cor:normalized-error}.  If the almost-sure value is zero,
each \(\Bad_A(\eps)\), their countable union \(\Bad_A\), and
\(\Bad_A(\infty)\subset\Bad_A\) are null.  If the almost-sure value is
infinite, \(\Bad_A(\infty)\), and hence all the other displayed sets,
are conull.
\end{proof}

\begin{rem}\label{rem:critical-comparison}
\leavevmode
\begin{enumerate}[
  label=\textup{(\roman*)},
  topsep=.4\baselineskip,
  itemsep=.4\baselineskip
]
\item \emph{Lebesgue measure.}  The question in
\cite[Remark~1.8(3)]{Kim24} is answered by
Proposition~\ref{prop:Bad-classification}: for every
\(A\in\Mat_{m,n}\), the set \(\Bad_A(\eps)\) is null for some
(equivalently, every) \(\eps>0\) if and only if
\(\sum_k\Delta_k(A)=\infty\).  It also settles the Lebesgue-measure case of
\cite[Question~1.3]{MRS24}.

\item \emph{Hausdorff dimension.}  Although not needed here, it is worth
mentioning that \cite[Theorem~1.3]{KKL25} proves
\[
 A\text{ is \emph{singular on average}}
 \quad\Longleftrightarrow\quad
 \dim_H\Bad_A(\eps)=m\quad\text{for some }\eps>0.
\]
In dimension one, see
\cite[Theorem~1.1]{BKLR21}; see also \cite{LSS19}.
For results on \(\Bad_A(\infty)\), see also \cite{ET11,Kim25}.
\end{enumerate}
\end{rem}

\begin{cor}\label{cor:nonsingular}
If \(A\) is nonsingular, then
\[
 \sum_{k=1}^{\infty}\Delta_k(A)=\infty.
\]
Consequently, \(\mathcal L^m(\Bad_A)=0\), equivalently
\(\mathfrak c_A=0\) almost everywhere.
\end{cor}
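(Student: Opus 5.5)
Since \(\Delta_k(A)\asymp 1/N_A(2^{-kn/m},2^k)\) by Proposition~\ref{prop:Bad-classification}, it suffices to show that \(N_A(2^{-kn/m},2^k)\) stays bounded along a set of \(k\) that is infinite, or at least large enough that the reciprocals sum to infinity. Once divergence holds, Proposition~\ref{prop:Bad-classification} gives \(\mathcal L^m(\Bad_A)=0\) and \(\mathfrak c_A=0\) almost everywhere.

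Nonsingularity gives \(\eps>0\) and an unbounded sequence \(Q_j\to\infty\) with
\[
\norm{A\bq}_{\Z^m}\ge\eps Q_j^{-n/m}\quad\text{for all }0<\norm{\bq}\le Q_j .
\]
Choose \(k_j\) with \(2^{k_j}\le Q_j<2^{k_j+1}\); these \(k_j\) are infinitely many distinct integers. At \(k=k_j\), any nonzero \((\bp,\bq)\) counted by \(N_A(r,T)\) with \(r=2^{-kn/m}\), \(T=2^{k}\) has \(\norm{\bq}\le Q_j\) and \(\norm{A\bq-\bp}\le 2^{-kn/m}\le 2^{n/m}Q_j^{-n/m}\). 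This does not immediately contradict the lower bound, because \(\eps\) may be smaller than \(2^{n/m}\). So I rescale. By Lemma~\ref{lem:dilation} (dilation of \(r\) by a fixed constant changes \(N_A\) by a bounded factor, as used in Corollary~\ref{cor:normalized-error}), \(N_A(r,T)\asymp_{\eps} N_A(c r,T)\) with \(c=\eps 2^{-n/m}/2\). For \(N_A(cr,T)\), every nonzero counted point has \(\norm{\bq}\le Q_j\) and \(\norm{A\bq-\bp}<\eps Q_j^{-n/m}\). If \(\bq\neq0\), this contradicts the nonsingularity bound. If \(\bq=0\), then \(\norm{\bp}\le cr<1\) forces \(\bp=0\). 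Hence \(N_A(cr,T)=1\), so \(N_A(r,T)\ll_\eps 1\) and \(\Delta_{k_j}(A)\gg 1\) for all \(j\).

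Therefore \(\sum_k\Delta_k(A)\ge\sum_j\Delta_{k_j}(A)=\infty\), and Proposition~\ref{prop:Bad-classification} gives the consequences. The only delicate point is whether Lemma~\ref{lem:dilation} controls \(N_A\) under dilation of \(r\) by a fixed constant. If it only controls \(M_A\), I would apply it in that form, which is equivalent here because the factor \(r^mT^n\) changes by a fixed constant. If instead it only covers dilation in \(T\), I would avoid it. As an alternative, cover the box \(\norm{A\bq-\bp}\le r,\ \norm{\bq}\le T\) by \(O_\eps(1)\) translates of the smaller box of radius \(\tfrac12\eps Q_j^{-n/m}\) in the first coordinate and \(Q_j/2\) in the second. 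Each translate contains at most one point of \(\LL_A\), since differences of two points in it are excluded by nonsingularity as above. This gives \(N_A\ll_\eps1\) directly.
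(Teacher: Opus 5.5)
Your proof is correct, but it takes a different route from the paper's.

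The paper argues dynamically, in four steps:
\begin{enumerate}
\item The Dani correspondence turns nonsingularity into non-divergence of $\{a_t\LL_A:t\ge0\}$.
\item Mahler's criterion then gives $\lambda_1(a_t\LL_A)\ge c_0$ at arbitrarily large $t$.
\item The bounded variation of $\lambda_1$ over bounded time intervals passes this to the sampled times $t=nk\log2$.
\item Finally, $\Delta_k(A)\ge\min\{1,c\}^d$ is read off directly from the definition \eqref{eq:critical-data}, using $\lambda_i\ge\lambda_1$.
\end{enumerate}

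You stay on the counting side instead. Via the comparison $\Delta_k(A)\asymp1/N_A(2^{-kn/m},2^k)$ from Proposition~\ref{prop:Bad-classification}, you unpack the definition of nonsingularity to get $\LL_A\cap\mathcal B(c\,2^{-k_jn/m},2^{k_j})=\{\mathbf0\}$. You handle the discretization by choosing $2^{k_j}\le Q_j<2^{k_j+1}$ and absorbing the factor $2^{n/m}$ into $c$ through Lemma~\ref{lem:dilation}.

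Since $a_{nk_j\log2}$ maps $\mathcal B(c\,2^{-k_jn/m},2^{k_j})$ onto $[-c,c]^m\times[-1,1]^n$, your empty-box statement amounts to $\lambda_1(a_{nk_j\log2}\LL_A)\ge c$. So you are in effect reproving the easy direction of the Dani correspondence, at exactly the dyadic times needed.

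What your version buys is self-containedness. It needs no appeal to Dani, Mahler, or the continuity of $\lambda_1$ along the flow. The price is that it leans on the lattice-counting comparison (Lemma~\ref{lem:minima-mass}) behind Proposition~\ref{prop:Bad-classification}. The paper's version is shorter given the dynamical dictionary, and keeps the successive-minima picture in view.

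Two small remarks.
\begin{itemize}
\item Your hedging is unnecessary: Lemma~\ref{lem:dilation} is stated for $N_A(c_1r,c_2T)$ itself.
\item In your fallback covering argument there is a boundary issue. Differences of two points in a closed half-size translate land in the closed box $\mathcal B(\eps Q_j^{-n/m},Q_j)$. There, equality $\norm{A\bq-\bp}=\eps Q_j^{-n/m}$ is not excluded by the non-strict nonsingularity bound. So the translates should be taken slightly smaller, as your choice of $c$ already does in the main argument.
\end{itemize}
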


\begin{proof}
By the Dani correspondence \cite[Theorem~2.14]{Dan85}, singularity of
\(A\) is equivalent to divergence of the forward orbit
\(\{a_t\LL_A:t\ge0\}\) in the space of unimodular lattices.  Since
\(A\) is nonsingular, this orbit meets some compact set at arbitrarily
large times.  By Mahler's compactness criterion, its first minimum is
bounded away from zero there.  Thus there are \(c_0>0\) and arbitrarily
large \(t\) such that
\[
 \lambda_1(a_t\LL_A)\ge c_0.
\]
Since the first minimum changes by at most a fixed multiplicative factor
over bounded time intervals, sampling at \(t=nk\log2\) gives \(c>0\)
and infinitely many \(k\) such that
\(\lambda_1(a_{nk\log2}\LL_A)\ge c\).  Since
\(\lambda_i\ge\lambda_1\) for every
\(i\), at each such index
\[
 \Delta_k(A)\ge\min\{1,c\}^d,
\]
so \(\sum_k\Delta_k(A)=\infty\), and the remaining conclusions follow
from Proposition~\ref{prop:Bad-classification}.
\end{proof}

\section{Hausdorff measure and dimension}\label{sec:hausdorff}

We give separate sufficient conditions for the \(s\)-dimensional Hausdorff
measure \(\mathcal H^s( W_A(\psi))\) to be zero or infinite and use them to
obtain dimension bounds for the approximation functions
\(\psi_v(q)=q^{-v}\).  The convergence criterion uses a direct cover at radius
\(\psi(2^k)\), whereas the divergence criterion uses the mass transference
principle \cite{BV06} with the enlarged radius
\(\psi(2^k)^{s/m}\); this explains the different clustering counts below.

\begin{cor}\label{cor:hausdorff}
Let \(A\in\Mat_{m,n}\), let \(\psi:\R_+\to\R_+\) be non-increasing
and satisfy \Dcond{}, and let \(0<s<m\).  Then
\begin{equation}\label{eq:Hausdorff-cases}
 \mathcal H^s\bigl( W_A(\psi)\bigr)=
 \begin{cases}
  0&\displaystyle\text{if }
  \sum_{k=0}^{\infty}
  \frac{2^{kn}\psi(2^k)^s}
       {N_A(\psi(2^k),2^k)}<\infty,\\[10pt]
  \infty&\displaystyle\text{if }
  \sum_{k=0}^{\infty}
  \frac{2^{kn}\psi(2^k)^s}
       {N_A(\psi(2^k)^{s/m},2^k)}=\infty.
 \end{cases}
\end{equation}
These are separate implications; the two series conditions need not be
complementary.
\end{cor}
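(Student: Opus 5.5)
For the convergence half I will use a direct cover at radius \(\psi(2^k)\), as in the convergence part of Theorem~\ref{thm:main}. By monotonicity of \(\psi\), every \(\bb\in W_A(\psi)\) lies in infinitely many of the sets
\[
 F_k=\bigcup_{\norm{\bq}\le T_{k+1}}B(\langle A\bq\rangle,r_k),
\]
so \(W_A(\psi)\subset\limsup_k F_k\).

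The key input is a covering version of Lemma~\ref{lem:cumulative-mass} and Lemma~\ref{lem:dilation}. The centers \(\langle A\bq\rangle\), \(\norm{\bq}\le T_{k+1}\), split into \(\ll T_k^n/N_A(r_k,T_k)\) clusters. Each cluster lies in a ball of radius \(\asymp r_k\).

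This is the same Vitali count that gives \(\#\mathcal G_k\asymp T_k^n/N_A(r_k,T_k)\). The argument is that every point within \(2r_k\) of a selected center has at least \(\gg N_A(r_k,T_k)\) neighbours among the centers with \(\norm{\bq}\le 2T_{k+1}\). Since there are \(\asymp T_k^n\) centers in total, the number of disjoint selected balls is \(\ll T_k^n/N_A(r_k,T_k)\). The \(2\)- and \(3\)-fold dilations of radius and height cost only constants, by Lemma~\ref{lem:dilation}.

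Hence \(F_k\) is covered by \(\ll 2^{kn}/N_A(\psi(2^k),2^k)\) balls of radius \(\ll\psi(2^k)\). For every \(K\) this gives
\[
 \mathcal H^s_{\delta_K}(W_A(\psi))\ll\sum_{k\ge K}\frac{2^{kn}\psi(2^k)^s}{N_A(\psi(2^k),2^k)}\to0,
\]
where \(\delta_K=\psi(2^K)\to0\) by \Dcond{}. So the convergent series forces \(\mathcal H^s(W_A(\psi))=0\).

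For the divergence half I will use the mass transference principle \cite{BV06} for balls in \(\T^m\). Put \(\Psi(q)=\psi(q)^{s/m}\). This function is non-increasing, and it satisfies \Dcond{} with contraction factor \(\varrho^{s/m}<1\). For a sequence of balls \(B(\langle A\bq\rangle,\psi(\norm\bq))\), the principle says: if the enlarged balls \(B(\langle A\bq\rangle,\psi(\norm\bq)^{s/m})\) have a limsup set of full Lebesgue measure, then the original limsup set has \(\mathcal H^s\) equal to \(\mathcal H^s(\T^m)=\infty\) for \(s<m\). The enlarged limsup set is exactly \(W_A(\Psi)\), apart from the countable set of points \(\langle A\bq\rangle\) and the usual bookkeeping with strict inequalities, which can be absorbed by shrinking \(\psi\) by a constant. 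Likewise \(W_A(\psi)\) is the limsup of the original balls up to that countable set, which has \(\mathcal H^s\)-measure zero.

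By Theorem~\ref{thm:main} applied to \(\Psi\), the set \(W_A(\Psi)\) has full measure exactly when \(\sum_k M_A(\Psi(2^k),2^k)=\infty\). Here
\[
 M_A(\Psi(2^k),2^k)=\frac{2^{kn}\psi(2^k)^s}{N_A(\psi(2^k)^{s/m},2^k)},
\]
which is precisely the second series in the statement.

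I expect the main obstacle to be matching the form of the mass transference principle to the limsup set, since \(W_A(\psi)\) is a limsup over \(\bq\) rather than over a sequence indexed by radii tending to zero. The balls \(B(\langle A\bq\rangle,\psi(\norm\bq))\) do form a countable family with radii tending to zero, because \(\psi\to0\) by \Dcond{}, and the principle from \cite{BV06} applies to such a family directly. The remaining care is the \(\pm\) constant rescaling, which Lemma~\ref{lem:dilation} absorbs at the level of the series.
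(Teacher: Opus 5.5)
Your proposal is correct and follows the paper's proof. For convergence, you cover \(\bigcup_{\norm{\bq}\le T_{k+1}}B(\langle A\bq\rangle,r_k)\) by \(\ll 2^{kn}/N_A(\psi(2^k),2^k)\) Vitali balls \(3B\) using Lemmata~\ref{lem:cumulative-mass} and~\ref{lem:dilation}, then run a Hausdorff--Cantelli argument; for divergence, you apply Theorem~\ref{thm:main} to \(\psi^{s/m}\) and then the mass transference principle. Your hedges about countable exceptional sets and constant rescalings are unnecessary, because the limsup of the balls \(B(\langle A\bq\rangle,\psi(\norm{\bq}))\) over \(\bq\ne\mathbf 0\) is exactly \(W_A(\psi)\).
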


\begin{proof}
The convergence case is proved together with that of
Theorem~\ref{thm:main} in Section~\ref{subsec:convergence-case}.

For the divergence case, put \(\varphi=\psi^{s/m}\).
If \(\varrho\) is a contraction factor for \(\psi\), then
\(\varphi(2^{k+1})\le\varrho^{s/m}\varphi(2^k)\) for all sufficiently
large \(k\).  Thus \(\varphi\) is non-increasing and satisfies
\Dcond{}, while the divergent series in \eqref{eq:Hausdorff-cases} is
\[
 \sum_k M_A(\varphi(2^k),2^k).
\]
Theorem~\ref{thm:main} therefore gives
\(\mathcal L^m( W_A(\varphi))=1\).

For each \(\mathbf0\ne\bq\in\Z^n\), put
\[
 B_{\bq}:=B(\langle A\bq\rangle,\psi(\norm{\bq})),
 \qquad r(B_{\bq}):=\psi(\norm{\bq}),
\]
where \(r(B_{\bq})\) denotes the radius of \(B_{\bq}\).
The torus with the supremum metric satisfies
\(\mathcal H^m(B(\bb,r))\asymp r^m\) for all sufficiently small \(r\).
We apply the general mass transference principle
\cite[\S6.1, Theorem~3]{BV06} with
\(X=\T^m\), \(g(r)=r^m\), and \(f(r)=r^s\).  In its notation,
\[
 B_{\bq}^m=B_{\bq},
 \qquad
 B_{\bq}^s=B(\langle A\bq\rangle,\varphi(\norm{\bq})).
\]
Since \(r(B_{\bq})\to0\), \(r\mapsto r^{s-m}\) is monotonic, and
\(\limsup_{\norm{\bq}\to\infty}B_{\bq}^s= W_A(\varphi)\) is conull,
the mass transference principle gives
\[
 \mathcal H^s( W_A(\psi))
 =\mathcal H^s\!\left(\limsup_{\norm{\bq}\to\infty}B_{\bq}^m\right)
 =\infty.
\]
\end{proof}

\begin{cor}
\label{cor:power-dimension}
Let \(A\in\Mat_{m,n}\), and suppose that
\[
 \sum_{k=1}^{\infty}\Delta_k(A)=\infty.
\]
Then, for every \(v>0\),
\begin{equation}\label{eq:power-dimension}
 \dim_H W_A(\psi_v)
 =\min\left\{m,\frac nv\right\}.
\end{equation}
In particular, \eqref{eq:power-dimension} holds for every nonsingular
matrix \(A\).
\end{cor}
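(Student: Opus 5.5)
The plan is to prove the upper bound \(\dim_H W_A(\psi_v)\le\min\{m,n/v\}\) for every \(A\), and then to use the hypothesis \(\sum_k\Delta_k(A)=\infty\) only for the lower bound. The final sentence follows at once from Corollary~\ref{cor:nonsingular}, which shows that every nonsingular \(A\) satisfies the hypothesis.

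\emph{Upper bound.} The bound \(\dim_H W_A(\psi_v)\le m\) is trivial. For \(n/v<m\), fix \(s\in(n/v,m)\) and apply the convergence half of Corollary~\ref{cor:hausdorff} to \(\psi_v\), which satisfies \Dcond{} with contraction factor \(2^{-v}\). Since \(N_A(\cdot,\cdot)\ge1\) (the zero vector is counted), the relevant series is bounded by
\[
 \sum_k 2^{kn}2^{-kvs}<\infty .
\]
Hence \(\mathcal H^s(W_A(\psi_v))=0\) for every \(s>n/v\). Alternatively, one can cover \(W_A(\psi_v)\) directly by the balls \(B(\langle A\bq\rangle,2^{-kv})\) with \(\norm{\bq}\le2^{k+1}\), which gives the same series.

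\emph{Lower bound.} By Proposition~\ref{prop:Bad-classification}, the divergence of \(\sum_k\Delta_k(A)\) gives \(\mathfrak c_A=0\) almost everywhere. In particular \(W_A(\psi_{n/m})\) is conull. I split into two cases.

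\begin{enumerate}[label=\textup{(\alph*)}]
\item If \(v\le n/m\), then \(\psi_v\ge\psi_{n/m}\) on \([1,\infty)\). Hence \(W_A(\psi_v)\supset W_A(\psi_{n/m})\) up to finitely many \(\bq\), so \(W_A(\psi_v)\) has full measure and dimension \(m\).
\item If \(v>n/m\), put \(s=n/v\in(0,m)\). Apply the mass transference principle \cite[\S6.1, Theorem~3]{BV06} exactly as in the proof of Corollary~\ref{cor:hausdorff}, with \(X=\T^m\), \(g(r)=r^m\), \(f(r)=r^s\), and balls \(B_{\bq}=B(\langle A\bq\rangle,\norm{\bq}^{-v})\). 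The enlarged balls are \(B_{\bq}^s=B(\langle A\bq\rangle,\norm{\bq}^{-vs/m})=B(\langle A\bq\rangle,\norm{\bq}^{-n/m})\). Their limsup is \(W_A(\psi_{n/m})\), which is conull. Therefore \(\mathcal H^{n/v}(W_A(\psi_v))=\infty\), and so \(\dim_H W_A(\psi_v)\ge n/v\).
\end{enumerate}

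Feeding \(\psi_{n/m}\) directly into the mass transference principle avoids having to check the divergence series in Corollary~\ref{cor:hausdorff}, which involves the enlarged-radius count \(N_A(\psi^{s/m},\cdot)\). This is the step I expected to be the main obstacle; routing through the critical function removes it entirely.

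What remains are small technical points. Radii must tend to zero, which holds. The torus must be locally Ahlfors \(m\)-regular, which holds. The exceptional countable set \(\{\langle A\bq\rangle\}\) must not matter; it has measure zero. Combining the two bounds gives \eqref{eq:power-dimension}.
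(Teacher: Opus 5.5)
Your proof is correct and follows the paper's argument: the upper bound comes from the convergence half of Corollary~\ref{cor:hausdorff} using $N_A\ge1$, the case $v\le n/m$ from the inclusion $W_A(\psi_{n/m})\subseteq W_A(\psi_v)$, and the case $v>n/m$ from mass transference. Feeding $W_A(\psi_{n/m})$ directly into the mass transference principle is just the paper's divergence argument for Corollary~\ref{cor:hausdorff} unpacked, since there $\varphi=\psi_v^{s/m}=\psi_{n/m}$; the paper checks the enlarged-radius series in one line via \eqref{eq:critical-equivalence}, so it was never a real obstacle.
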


\begin{proof}
For every \(v>0\), the function \(\psi_v\) is non-increasing and
\[
 \psi_v(2^{k+1})=2^{-v}\psi_v(2^k),
\]
so it satisfies \Dcond{}.

Suppose first that \(0<v\le n/m\).  Since
\(\psi_v(q)\ge\psi_{n/m}(q)\) for \(q\ge1\),
\[
  W_A(\psi_{n/m})\subseteq W_A(\psi_v),
\]
while Proposition~\ref{prop:Bad-classification} shows that
\( W_A(\psi_{n/m})\) is conull.  It follows that
\( W_A(\psi_v)\) is conull and has Hausdorff dimension \(m\), as
asserted in \eqref{eq:power-dimension}.

Now suppose that \(v>n/m\), and put \(s=n/v\), so \(0<s<m\).  Since
\[
 2^{kn}\psi_v(2^k)^s=1,
 \qquad
 \psi_v(2^k)^{s/m}=\psi_{n/m}(2^k),
\]
we have, by \eqref{eq:critical-equivalence},
\[
 \sum_{k=0}^{\infty}
 \frac{2^{kn}\psi_v(2^k)^s}
      {N_A(\psi_v(2^k)^{s/m},2^k)}
 =\sum_{k=0}^{\infty}
 \frac{1}{N_A(\psi_{n/m}(2^k),2^k)}
 \asymp\sum_{k=0}^{\infty}\Delta_k(A)=\infty.
\]
The divergence part of Corollary~\ref{cor:hausdorff} therefore gives
\(\mathcal H^{n/v}( W_A(\psi_v))=\infty\), and hence
\(\dim_H W_A(\psi_v)\ge n/v\).

For the reverse inequality, fix \(s'\) with \(n/v<s'<m\).  The zero
vector counted in \(N_A\) gives
\(N_A(\psi_v(2^k),2^k)\ge1\), while \(n-vs'<0\).  Hence
\[
 \sum_k
 \frac{2^{k(n-vs')}}{N_A(\psi_v(2^k),2^k)}
 \le\sum_k2^{k(n-vs')}<\infty,
\]
and the convergence part of Corollary~\ref{cor:hausdorff} gives
\(\mathcal H^{s'}( W_A(\psi_v))=0\).  Since this holds for every
\(s'>n/v\), it follows that \(\dim_H W_A(\psi_v)\le n/v\).
The last assertion follows from Corollary~\ref{cor:nonsingular}.
\end{proof}

For \(m=n=1\), this gives the same dimension value as in the work of
Bugeaud and Schmeling--Troubetzkoy \cite{Bug03,ST03}.

\section{The Fuchs--Kim criterion}\label{sec:classical}

We now specialize the criterion to an irrational number \(\alpha\).
Let \(q_k\) denote the denominators of its continued-fraction
convergents.

\begin{prop}\label{prop:scalar}
Let \(\psi:\R_+\to\R_+\) be non-increasing and satisfy \Dcond{}.
\begin{equation}\label{eq:scalar-equivalence}
 \sum_{j=0}^{\infty}M_\alpha(\psi(2^j),2^j)<\infty
 \quad\Longleftrightarrow\quad
 \sum_{k=0}^{\infty}\ \sum_{q=q_k}^{q_{k+1}-1}
 \min\{\psi(q),\norm{q_k\alpha}_{\Z}\}<\infty.
\end{equation}
\end{prop}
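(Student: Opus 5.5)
The plan is to compute $N_\alpha(r,T)$ explicitly from the continued fraction of $\alpha$, which turns each summand $M_\alpha(\psi(2^j),2^j)$ into an explicit expression in $q_k$ and $\norm{q_k\alpha}_{\Z}$, and then to compare the two series block by block. One could instead start from Corollary~\ref{cor:pgn-form} with $d=2$: the minima $\lambda_1,\lambda_2$ of $a_t\LL_\alpha$ are realized by consecutive convergent vectors. A direct count seems just as easy, and I would keep the successive-minima form only as a check.

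\textbf{Step 1: counting.} Fix $T\ge1$ and let $k$ be the index with $q_k\le T<q_{k+1}$. Write $\delta_k:=\norm{q_k\alpha}_{\Z}\asymp 1/q_{k+1}$. By the best-approximation property, $\delta_k$ is the smallest value of $\norm{q\alpha}_{\Z}$ over $0<\norm{q}\le T$. Moreover, the lattice $\LL_\alpha$ has the reduced basis $(q_k,\,q_k\alpha-p_k)$, $(q_{k-1},\,q_{k-1}\alpha-p_{k-1})$ at the scale of the box $[-r,r]\times[-T,T]$. From this I expect three regimes, uniformly in $r,T$:
\[
 N_\alpha(r,T)\asymp
 \begin{cases}
  1, & r<\delta_k,\\
  1+\min\{T/q_k,\ r/\delta_k\}, & \delta_k\le r<\delta_{k-1},\\
  rT, & r\ge\delta_{k-1}.
 \end{cases}
\]
Consequently
\[
 M_\alpha(r,T)\asymp
 \begin{cases}
  rT, & r<\delta_k,\\
  \max\{rq_k,\ T\delta_k\}, & \delta_k\le r<\delta_{k-1},\\
  1, & r\ge\delta_{k-1}.
 \end{cases}
\]
In every regime, up to constants, this can be written as
\[
 M_\alpha(r,T)\asymp\min\bigl\{1,\ T\min\{r,\delta_k\}+q_k\min\{r,\delta_{k-1}\}\bigr\}.
\]
This is the form I would aim to prove, presumably via Lemma~\ref{lem:minima-mass}, or directly by Minkowski's second theorem in dimension two.

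\textbf{Step 2: block comparison.} For $2^j\in[q_k,q_{k+1})$ and monotone $\psi$, the dyadic piece of the Fuchs--Kim block satisfies
\[
 \sum_{2^j\le q<2^{j+1}}\min\{\psi(q),\delta_k\}\asymp 2^j\min\{\psi(2^j),\delta_k\}.
\]
This holds up to a one-step index shift, which is harmless because $\psi(2^{j+1})\le\psi(2^j)$, so the shifted series are sandwiched. Dyadic intervals straddling some $q_{k+1}$ are split into two pieces. Since $\delta_k\le 1/q_{k+1}\le 2^{-j}$, we have $2^j\min\{\psi,\delta_k\}\le1$, so the outer $\min\{1,\cdot\}$ does not matter for this term. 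The main term $T\min\{r,\delta_k\}$ of Step~1 therefore matches the Fuchs--Kim summands exactly, and the dyadic Fuchs--Kim series is $\asymp$ the full Fuchs--Kim series.

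\textbf{Step 3: the error term (main obstacle).} It remains to show that the extra term $q_k\min\{\psi(2^j),\delta_{k-1}\}$, summed over $j$, is dominated by the Fuchs--Kim series plus a constant. Here I would use \Dcond{}. For fixed $k$, the values $\psi(2^j)$ with $2^j\ge q_k$ decay geometrically, so
\[
 \sum_{j:\,q_k\le 2^j<q_{k+1}} q_k\min\{\psi(2^j),\delta_{k-1}\}\ll q_k\min\{\psi(q_k),\delta_{k-1}\}.
\]
I then compare this with the tail of the previous block,
\[
 \sum_{q_k/2\le q<q_k}\min\{\psi(q),\delta_{k-1}\}\ge \tfrac{q_k}{2}\min\{\psi(q_k),\delta_{k-1}\},
\]
which is part of the $(k-1)$-st Fuchs--Kim block. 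This needs $q_{k-1}\le q_k/2$. When instead $q_k<2q_{k-1}$, i.e.\ $a_k=1$, I would fall back on the preceding dyadic interval inside $[q_{k-2},q_{k-1})$, or use $q_{k+1}\ge 2q_{k-1}$ and reindex by pairs; I expect this bookkeeping to need the most care. The remaining cases are handled as follows. The regime $r\ge\delta_{k-1}$, where $M\asymp1$, forces the corresponding Fuchs--Kim dyadic piece to be $\gg 2^j\delta_{k-1}\gg q_k/q_k\asymp 1$ as well. The $\min\{1,\cdot\}$ truncation only lowers the left side, and it can occur infinitely often only if both series diverge. Combining Steps~2 and~3 gives the two inequalities between the series, which proves \eqref{eq:scalar-equivalence}.
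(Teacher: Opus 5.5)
Your route is the paper's. You condense the non-increasing function $h(q)=\min\{\psi(q),\delta_s\}$ for $q_s\le q<q_{s+1}$. This also removes the need to split dyadic intervals, which is just as well, since one dyadic interval can contain two convergent denominators. You then use $M_\alpha(\psi(2^j),2^j)\gg 2^jh(2^j)$ for one implication, and $M_\alpha(\psi(2^j),2^j)\ll 2^jh(2^j)+q_k\psi(2^j)$ together with (D) for the other.

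Your three-regime count is correct. Write a lattice point $(x,y)=(q\alpha-p,q)$ as $a\mathbf v_k+b\mathbf v_{k-1}$. Then $b=\delta_ky-(-1)^kq_kx$, so $|b|\le\delta_kT+q_kr<2$ when $T<q_{k+1}$ and $r<\delta_{k-1}$. The implication ``left series converges $\Rightarrow$ Fuchs--Kim series converges'' is therefore complete.

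The gap is in the converse, at exactly the step the paper imports from \cite[proof of Lemma~2.1]{FK16}: that Fuchs--Kim convergence gives $\sum_kq_k\psi(q_k)<\infty$. Step~3 as written does not establish this, for three reasons.
\begin{enumerate}
\item Your display $\sum_{q_k\le2^j<q_{k+1}}q_k\min\{\psi(2^j),\delta_{k-1}\}\ll q_k\min\{\psi(q_k),\delta_{k-1}\}$ does not follow from geometric decay. For example, take $\psi(q)=q^{-1/2}$ and $q_{k+1}\ge q_k^2$. All $\asymp\log q_k$ scales with $q_k\le2^j\le q_k^2$ have $\psi(2^j)\ge1/q_k>\delta_{k-1}$, and each contributes $q_k\delta_{k-1}>1/2$. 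The right side, however, is $q_k\delta_{k-1}<1$.
\item In the regime $\psi(2^j)\ge\delta_{k-1}$ you charge to ``the corresponding Fuchs--Kim dyadic piece''. For $2^j\in[q_k,q_{k+1})$ that piece has summands at most $\delta_k$ and total below $2^j\delta_k<1$. When $2^j\approx q_k$ it is only about $q_k/q_{k+1}$, so it is not $\gg1$.
\item The case $a_k=1$ is left open.
\end{enumerate}

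All three are repaired by an elementary lemma. Put $F_i:=\sum_{q_i\le q<q_{i+1}}\min\{\psi(q),\delta_i\}$. Then for all large $k$,
\[
 q_k\min\{\psi(q_k),\delta_{k-1}\}\ll F_{k-1}+F_{k-2}.
\]
\begin{enumerate}
\item If $a_k\ge2$, use $[q_k/2,q_k)\subset[q_{k-1},q_k)$.
\item If $a_k=1$ and $a_{k-1}\ge2$, use $[q_{k-1}/2,q_{k-1})\subset[q_{k-2},q_{k-1})$, together with $q_k\le2q_{k-1}$ and $\delta_{k-2}>\delta_{k-1}$.
\item If $a_k=a_{k-1}=1$, use the whole block $[q_{k-1},q_k)$, whose length $q_{k-2}$ is at least $q_k/3$.
\end{enumerate}
In every case each summand is at least $\min\{\psi(q_k),\delta_{k-1}\}$ by monotonicity, and each block is charged at most twice.

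Since $q_k\delta_{k-1}>1/2$, Fuchs--Kim convergence then forces $\psi(q_k)<\delta_{k-1}$ for all large $k$. Consequently the regime $r\ge\delta_{k-1}$ occurs only finitely often and needs no charging at all, and the lemma yields $\sum_kq_k\psi(q_k)<\infty$. Only at this point does (D) legitimately give
\[
 \sum_{q_k\le2^j<q_{k+1}}q_k\min\{\psi(2^j),\delta_{k-1}\}\le\frac{q_k\psi(q_k)}{1-\varrho}.
\]
With this in place your argument closes exactly as the paper's does.
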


\begin{proof}
For \(q_s\le q<q_{s+1}\), put
\[
 h(q):=\min\{\psi(q),\norm{q_s\alpha}_{\Z}\}.
\]
The function \(h\) is non-increasing, so Cauchy condensation gives
\[
 \sum_qh(q)<\infty
 \quad\Longleftrightarrow\quad
 \sum_j2^jh(2^j)<\infty.
\]
For all large \(j\), we have \(\psi(2^j)<1/2\);
choose \(s\) with \(q_s\le2^j<q_{s+1}\).

Suppose first that the series on the left of
\eqref{eq:scalar-equivalence} converges.
The best approximation property of the convergent denominator \(q_s\)
gives \(\norm{q\alpha}_{\Z}\ge\norm{q_s\alpha}_{\Z}\) for
\(1\le q<q_{s+1}\) \cite[\S2.2]{FK16}.
Hence the points \(q\alpha-p\in[-\psi(2^j),\psi(2^j)]\), where
\(q,p\in\Z\) and \(0\le q\le2^j\), are
\(\norm{q_s\alpha}_{\Z}\)-separated.
Counting them and restoring both signs gives
\[
\begin{aligned}
 N_\alpha(\psi(2^j),2^j)
 &\le1+\frac{4\psi(2^j)}{\norm{q_s\alpha}_{\Z}}
 \le\frac{5\psi(2^j)}{h(2^j)},\\
 M_\alpha(\psi(2^j),2^j)&\ge\frac15 2^jh(2^j).
\end{aligned}
\]
The right-hand series therefore converges by condensation.

Conversely, suppose that \(\sum_qh(q)<\infty\).
The estimate in \cite[proof of Lemma~2.1]{FK16} implies
\(\sum_sq_s\psi(q_s)<\infty\).
By the homogeneous counting bound \cite[\S6.1, (6.9)]{BHV20},
adjoining the zero vector and using symmetry gives
\[
\begin{aligned}
 N_\alpha(\psi(2^j),2^j)
 &\ge1+2\left\lfloor
       \min\left\{\psi(2^j)q_{s+1},\frac{2^j}{q_s}\right\}
       \right\rfloor\\
 &\gg\max\left\{1,
       \min\left\{\frac{\psi(2^j)}{\norm{q_s\alpha}_{\Z}},
                   \frac{2^j}{q_s}\right\}
       \right\},
\end{aligned}
\]
where we used \(q_{s+1}\norm{q_s\alpha}_{\Z}\ge1/2\).
If \(\psi(2^j)\le\norm{q_s\alpha}_{\Z}\), then
\(h(2^j)=\psi(2^j)\), and \(N_\alpha(\psi(2^j),2^j)\ge1\) gives
\[
 M_\alpha(\psi(2^j),2^j)
 =\frac{2^j\psi(2^j)}{N_\alpha(\psi(2^j),2^j)}
 \le2^j\psi(2^j)=2^jh(2^j).
\]
If \(\psi(2^j)>\norm{q_s\alpha}_{\Z}\), then
\(h(2^j)=\norm{q_s\alpha}_{\Z}\), and the preceding counting bound gives
\[
\begin{aligned}
 M_\alpha(\psi(2^j),2^j)
 &\ll\frac{2^j\psi(2^j)}
 {\min\left\{\dfrac{\psi(2^j)}{\norm{q_s\alpha}_{\Z}},
                    \dfrac{2^j}{q_s}\right\}}\\
 &=\max\{2^jh(2^j),q_s\psi(2^j)\}.
\end{aligned}
\]
Thus, in both cases,
\[
 M_\alpha(\psi(2^j),2^j)\ll2^jh(2^j)+q_s\psi(2^j).
\]
Finally, let \(j_s=\lceil\log_2q_s\rceil\), so that \(2^{j_s}\ge q_s\).
For all sufficiently large \(s\), iterating \Dcond{} and using
monotonicity gives
\[
 \psi(2^{j_s+\ell})\le\varrho^\ell\psi(2^{j_s})
 \le\varrho^\ell\psi(q_s)\qquad(\ell\ge0).
\]
Since every \(j\) with \(q_s\le2^j<q_{s+1}\) satisfies \(j\ge j_s\),
we obtain
\[
 \sum_{\{j:q_s\le2^j<q_{s+1}\}}q_s\psi(2^j)
 \le q_s\psi(q_s)\sum_{\ell=0}^{\infty}\varrho^\ell
 =\frac{q_s\psi(q_s)}{1-\varrho}.
\]
Summing over \(s\), and using the convergence of
\(\sum_sq_s\psi(q_s)\) and \(\sum_j2^jh(2^j)\), proves convergence
of the left-hand series.
\end{proof}

\part{Proof of the asymptotic zero--one law}

We now prove Theorem~\ref{thm:main}, using lattice counts to control
the measures and intersections of dyadic target sets.
The convergence case follows from
covering estimates and the Borel--Cantelli lemma.  In the divergence
case, quasi-independence on average gives positive measure, and the
fixed-matrix zero--one law then yields full measure.

\section{Lattice-counting preliminaries}
\label{sec:preliminaries}

We collect the lattice-counting estimates used throughout Part~II.
They relate \(M_A(r,T)\) to successive minima and
show that fixed dilations change it only by constant factors.

Recall that \(\langle\bx\rangle=\bx+\Z^m\), and that \(B(\bb,r)\)
denotes the open ball of radius \(r\) about \(\bb\in\T^m\).
Throughout Part~II, we retain the notation
\[
 T_k=2^k,\qquad r_k=\psi(T_k).
\]
After modifying \(\psi\) on a bounded initial interval, we may assume
that \Dcond{} holds for all \(k\ge0\). Thus
\begin{equation}\label{eq:radius-decay}
 r_\ell\le \varrho^{\ell-k}r_k\qquad(k<\ell),
\end{equation}
and \(r_k\to0\).

For \(r,T>0\), write
\[
 \mathcal B(r,T):=[-r,r]^m\times[-T,T]^n.
\]
Thus
\[
 N_A(r,T)=\#\bigl(\LL_A\cap\mathcal B(r,T)\bigr).
\]
We recall two standard lattice-point counting estimates.

\begin{lem}\label{lem:basic-lattice-counting}
Let \(\LL\subset\R^d\) be a full-rank lattice.  For every centrally
symmetric convex body \(K\subset\R^d\) centered at the origin and \(C\ge1\),
\begin{equation}\label{eq:lattice-doubling}
 \#(\LL\cap CK)
 \ll_C\#(\LL\cap K).
\end{equation}
Moreover, for every \(R>0\),
\begin{equation}\label{eq:standard-lattice-count}
 \#(\LL\cap[-R,R]^d)
 \asymp\prod_{i=1}^{d}\max\{1,R/\lambda_i(\LL)\}.
\end{equation}
\end{lem}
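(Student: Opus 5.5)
The plan is to prove the second estimate \eqref{eq:standard-lattice-count} first, by working in a reduced basis, and then deduce the doubling estimate \eqref{eq:lattice-doubling} from a covering argument.

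\emph{Upper bound in \eqref{eq:standard-lattice-count}.} Choose linearly independent $\bv_1,\dots,\bv_d\in\LL$ with $\norm{\bv_i}=\lambda_i(\LL)$. By the standard Mahler--Weyl reduction there is a basis $\bs_1,\dots,\bs_d$ of $\LL$ with $\norm{\bs_i}\ll\lambda_i$ and with uniformly bounded distortion. Concretely, every $\bx=\sum c_i\bs_i\in\R^d$ satisfies $\norm{\bx}\asymp\max_i\norm{c_i}\lambda_i$; this follows from Minkowski's second theorem, $\prod\lambda_i\asymp\det\LL$, together with Hadamard's inequality applied to the flags $\spanR(\bs_1,\dots,\bs_j)$. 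Granting this, a lattice point $\sum c_i\bs_i$ with integer $c_i$ lies in $[-R,R]^d$ only if $\norm{c_i}\le CR/\lambda_i$ for each $i$. The number of such integer vectors is at most $\prod_i(1+2CR/\lambda_i)\ll\prod_i\max\{1,R/\lambda_i\}$.

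\emph{Lower bound in \eqref{eq:standard-lattice-count}.} Use the same basis and the reverse direction of the norm equivalence. All integer combinations with $\norm{c_i}\le cR/\lambda_i$, for a small constant $c$, lie in $[-R,R]^d$. There are at least $\prod_i(1+2\lfloor cR/\lambda_i\rfloor)$ of them. For each $i$ the factor is $\ge1$, and it is $\gg R/\lambda_i$ once $R/\lambda_i\ge1/c$; in the intermediate range $1\le R/\lambda_i<1/c$ the bound $\ge1$ already gives comparability with $\max\{1,R/\lambda_i\}$ up to the constant $1/c$. The step I expect to be the main obstacle is establishing the uniform norm equivalence $\norm{\sum c_i\bs_i}\asymp\max_i\norm{c_i}\lambda_i$, with constants depending only on $d$. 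I would cite the classical fact (e.g.\ Cassels, or Lemma~8 of Mahler's work) that a basis can be chosen whose Gram--Schmidt orthogonalization has lengths $\asymp\lambda_i$; the equivalence then follows by triangular elimination.

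\emph{Proof of \eqref{eq:lattice-doubling}.} Cover $CK$ by $\ll_C 1$ translates $\bx_j+\tfrac12K$; this is possible by a volume/packing argument, since $K$ is convex and symmetric, so the number of translates depends only on $C$ and $d$. For each $j$, take two lattice points $\bv,\bw\in\LL\cap(\bx_j+\tfrac12K)$. Then $\bv-\bw\in\tfrac12K-\tfrac12K=K$. Fixing one such $\bw$ per translate gives an injection $\LL\cap(\bx_j+\tfrac12K)\to\LL\cap K$ via $\bv\mapsto\bv-\bw$. Summing over $j$ gives $\#(\LL\cap CK)\ll_C\#(\LL\cap K)$. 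This part is routine.
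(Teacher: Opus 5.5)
Your proof is correct, but it takes a different route from the paper in two of its three steps.

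\textbf{Doubling bound.} The paper applies Henk's Lemma~2.1 to the sublattice \(a\LL\), where \(a=\lceil 2C\rceil\), which has index \(a^d\). Pigeonholing \(\LL\cap CK\) into cosets of \(a\LL\), differences within a coset lie in \(a\LL\cap 2CK\subset a(\LL\cap K)\). You instead partition space: you cover \(CK\) by at most \((4C+1)^d\) translates of \(\tfrac12K\), obtained from a maximal packing of translates of \(\tfrac14K\), and subtract a base point in each translate. The two are dual pigeonhole arguments, one in the lattice and one in space, and they give the same kind of constant \(\ll_{C,d}1\).

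\textbf{Upper bound in the count.} The paper invokes Henk's theorem \(\#(\LL\cap K)\le\prod_i\lfloor 2/\lambda_i(K,\LL)+1\rfloor\) for an arbitrary symmetric convex body, so no basis is needed. You go through a Mahler--Weyl reduced basis. Your coordinate description \(\lvert\sum c_i\mathbf s_i\rvert\asymp\max_i\lvert c_i\rvert\lambda_i\) does hold with constants depending only on \(d\). Minkowski's second theorem, Hadamard's inequality and \(\lvert\mathbf s_i\rvert\ll\lambda_i\) force \(\lvert\mathbf s_i^*\rvert\asymp\lvert\mathbf s_i\rvert\asymp\lambda_i\). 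Back-substitution through the Gram--Schmidt coordinates then works, because \(\lvert\mu_{ji}\rvert\,\lvert\mathbf s_i^*\rvert\le\lvert\mathbf s_j\rvert\ll\lambda_j\). Your route yields more structure, namely an explicit parametrization of the lattice points in the box by an integer coordinate box, at the cost of invoking reduction theory. The paper's citation is shorter and has explicit constants.

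\textbf{Lower bound.} Here the two arguments essentially coincide. The only difference is that you use the reduced basis, while the paper uses linearly independent vectors realizing the minima; linear independence alone already makes the integer combinations distinct, so no basis is needed.
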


\begin{proof}
Put \(a=\lceil2C\rceil\).  Applying \cite[Lemma~2.1]{Henk02} to
\(CK\) and the sublattice \(a\LL\) gives
\[
 \#(\LL\cap CK)
 \le a^d\#\bigl(\LL\cap(2C/a)K\bigr)
 \le a^d\#(\LL\cap K),
\]
which proves \eqref{eq:lattice-doubling}.

For the lower bound in \eqref{eq:standard-lattice-count}, choose
linearly independent \(\bv_i\in\LL\) with
\(\norm{\bv_i}\le\lambda_i(\LL)\).
The distinct vectors \(\sum_i a_i\bv_i\), where \(a_i\in\Z\) and
\(\lvert a_i\rvert\le R/(d\lambda_i(\LL))\), lie in \([-R,R]^d\).
Counting them gives
\[
 \#(\LL\cap[-R,R]^d)
 \ge\prod_{i=1}^{d}
 \left(2\left\lfloor\frac{R}{d\lambda_i(\LL)}\right\rfloor+1\right)
 \gg\prod_{i=1}^{d}\max\{1,R/\lambda_i(\LL)\}.
\]
The upper bound follows from \cite[Theorem~1.5]{Henk02} applied to
\([-R,R]^d\), whose successive minima are \(\lambda_i(\LL)/R\).
\end{proof}

We now express \(N_A(r,T)\) and \(M_A(r,T)\) in terms of the
successive minima of \(a_t\LL_A\).

\begin{lem}\label{lem:minima-mass}
For \(r,T>0\), put
\[
 R=(T^nr^m)^{1/d},\qquad
 t=\frac{mn}{d}\log(T/r).
\]
Then
\begin{align}
 N_A(r,T)
 &\asymp\prod_{i=1}^{d}
    \max\{1,R/\lambda_i(a_t\LL_A)\},
 \label{eq:minima-count}\\
 M_A(r,T)
 &\asymp\prod_{i=1}^{d}
    \min\{R,\lambda_i(a_t\LL_A)\}.
 \label{eq:minima-mass}
\end{align}
\end{lem}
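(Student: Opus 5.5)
The plan is to transport the box \(\mathcal B(r,T)\) to a cube by the diagonal flow, and then invoke \eqref{eq:standard-lattice-count}.

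First, with \(R\) and \(t\) as in the statement, I will check that
\[
 a_t\mathcal B(r,T)=[-e^{t/m}r,e^{t/m}r]^m\times[-e^{-t/n}T,e^{-t/n}T]^n .
\]
We have \(t/m=\tfrac{n}{d}\log(T/r)\), so
\[
 e^{t/m}r=T^{n/d}r^{1-n/d}=T^{n/d}r^{m/d}=R .
\]
Similarly \(t/n=\tfrac{m}{d}\log(T/r)\), so
\[
 e^{-t/n}T=T^{1-m/d}r^{m/d}=T^{n/d}r^{m/d}=R .
\]
Hence \(a_t\mathcal B(r,T)=[-R,R]^d\). Since \(a_t\) is a linear bijection,
\[
 N_A(r,T)=\#\bigl(\LL_A\cap\mathcal B(r,T)\bigr)=\#\bigl(a_t\LL_A\cap[-R,R]^d\bigr).
\]
Applying \eqref{eq:standard-lattice-count} to the full-rank lattice \(a_t\LL_A\) gives \eqref{eq:minima-count} immediately.

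For \eqref{eq:minima-mass}, note that \(r^mT^n=R^d\). Therefore
\[
 M_A(r,T)=\frac{R^d}{N_A(r,T)}\asymp\prod_{i=1}^d\frac{R}{\max\{1,R/\lambda_i\}},
\]
where \(\lambda_i=\lambda_i(a_t\LL_A)\). Each factor satisfies
\[
 \frac{R}{\max\{1,R/\lambda_i\}}=\min\{R,\lambda_i\}.
\]
Indeed, if \(R\le\lambda_i\) the factor is \(R\), and otherwise it is \(\lambda_i\). This gives \eqref{eq:minima-mass}.

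The implied constants come from Lemma~\ref{lem:basic-lattice-counting} and depend only on \(d\). So the estimates are uniform in \(r\) and \(T\), which is what Corollary~\ref{cor:pgn-form} needs. There is no real obstacle here. The only point requiring care is the exponent bookkeeping that makes both sides of the box equal to \(R\), which is exactly why \(t\) and \(R\) are chosen as in the statement.
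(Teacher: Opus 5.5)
Your proposal is correct and follows the paper's proof: you use \(a_t\) to map \(\mathcal B(r,T)\) onto \([-R,R]^d\), apply \eqref{eq:standard-lattice-count} to \(a_t\LL_A\), and then divide \(R^d=r^mT^n\) by the count factor by factor. Your exponent bookkeeping, showing \(e^{t/m}r=e^{-t/n}T=R\), is exactly the verification the paper leaves implicit.
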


\begin{proof}
The definitions of \(R\) and \(t\) give
\(re^{t/m}=Te^{-t/n}=R\), so \(a_t\) maps
\(\mathcal B(r,T)\) onto \([-R,R]^d\).
Applying \eqref{eq:standard-lattice-count} to \(a_t\LL_A\), we obtain
\[
\begin{aligned}
 N_A(r,T)
 &=\#\bigl(a_t\LL_A\cap[-R,R]^d\bigr)\\
 &\asymp\prod_{i=1}^{d}\max\{1,R/\lambda_i(a_t\LL_A)\},
\end{aligned}
\]
which proves \eqref{eq:minima-count}.
Since \(r^mT^n=R^d\), the definition of \(M_A\) then gives
\[
\begin{aligned}
 M_A(r,T)
 &=\frac{R^d}{N_A(r,T)}
 \asymp\prod_{i=1}^{d}
 \frac{R}{\max\{1,R/\lambda_i(a_t\LL_A)\}}\\
 &=\prod_{i=1}^{d}\min\{R,\lambda_i(a_t\LL_A)\},
\end{aligned}
\]
proving \eqref{eq:minima-mass}.
\end{proof}

We shall also use the following robustness under fixed dilations.

\begin{lem}\label{lem:dilation}
For fixed \(c_1,c_2>0\),
\begin{equation*} N_A(c_1r,c_2T)\asymp_{c_1,c_2}N_A(r,T),\qquad
 M_A(c_1r,c_2T)\asymp_{c_1,c_2}M_A(r,T).
\end{equation*}
\end{lem}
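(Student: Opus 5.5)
The plan is to derive both comparisons directly from the doubling estimate \eqref{eq:lattice-doubling} of Lemma~\ref{lem:basic-lattice-counting}, applied to the lattice \(\LL_A\) and the boxes \(\mathcal B(r,T)\). Put \(C:=\max\{c_1,c_2,1/c_1,1/c_2,1\}\ge1\). Every box in question is a centrally symmetric convex body centered at the origin, so \eqref{eq:lattice-doubling} applies to each of them.

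\textbf{Upper bound.} Since \(c_1\le C\) and \(c_2\le C\), we have the inclusion
\[
 \mathcal B(c_1r,c_2T)\subset C\,\mathcal B(r,T)=\mathcal B(Cr,CT).
\]
Hence \eqref{eq:lattice-doubling} gives
\[
 N_A(c_1r,c_2T)\le\#\bigl(\LL_A\cap C\mathcal B(r,T)\bigr)\ll_{C}N_A(r,T).
\]

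\textbf{Lower bound.} Since \(1/c_1\le C\) and \(1/c_2\le C\), we have
\[
 \mathcal B(r,T)\subset C\,\mathcal B(c_1r,c_2T).
\]
Applying \eqref{eq:lattice-doubling} to the body \(K=\mathcal B(c_1r,c_2T)\) gives \(N_A(r,T)\ll_C N_A(c_1r,c_2T)\). Together with the upper bound, this yields
\[
 N_A(c_1r,c_2T)\asymp_{c_1,c_2}N_A(r,T).
\]

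\textbf{The \(M_A\) statement.} This follows at once from the definition of \(M_A\):
\[
 M_A(c_1r,c_2T)=\frac{c_1^mc_2^n\,r^mT^n}{N_A(c_1r,c_2T)}.
\]
The factor \(c_1^mc_2^n\) is a fixed constant, and the denominator is comparable to \(N_A(r,T)\) by the previous step. Therefore \(M_A(c_1r,c_2T)\asymp_{c_1,c_2}M_A(r,T)\).

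\textbf{Possible obstacle.} The only point needing care is that the implied constants must be uniform in \(r,T>0\). Lemma~\ref{lem:basic-lattice-counting} provides exactly this, because its constant depends only on \(C\) and \(d\), not on the body \(K\). An alternative route is through Lemma~\ref{lem:minima-mass}. Dilating \((r,T)\) by fixed factors changes \(R\) by a bounded factor, and changes \(t\) by a bounded amount, so \(a_t\LL_A\) moves by a bounded element and each \(\lambda_i\) changes by a bounded factor. This also gives the result, but the doubling argument is more direct.
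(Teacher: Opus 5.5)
Your proposal is correct and follows essentially the same argument as the paper: the same constant \(C\), the same two box inclusions, an application of the doubling estimate \eqref{eq:lattice-doubling} in each direction, and then the \(M_A\) comparison read off from the definition with the fixed factor \(c_1^mc_2^n\).
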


\begin{proof}
Set \(C=\max\{c_1,c_1^{-1},c_2,c_2^{-1}\}\).  Then
\[
 \mathcal B(c_1r,c_2T)\subset C\mathcal B(r,T),
 \qquad
 \mathcal B(r,T)\subset C\mathcal B(c_1r,c_2T).
\]
The first inclusion and \eqref{eq:lattice-doubling} give
\[
 N_A(c_1r,c_2T)
 \le\#\bigl(\LL_A\cap C\mathcal B(r,T)\bigr)
 \ll_C N_A(r,T).
\]
The second inclusion gives the reverse comparison, proving the
assertion for \(N_A\).  The assertion for \(M_A\) follows from
\[
 \frac{M_A(c_1r,c_2T)}{M_A(r,T)}
 =c_1^mc_2^n\frac{N_A(r,T)}{N_A(c_1r,c_2T)}
 \asymp_{c_1,c_2}1.\qedhere
\]
\end{proof}

\section{The fixed-matrix zero--one law}
\label{sec:zero-one}

The divergent argument will first produce a limsup set of positive
measure.  This section proves the fixed-matrix zero--one law that
upgrades such a conclusion to full measure.

We first record the standard Fourier consequence of one-sided invariance
under generators of a dense translation subgroup.

\begin{lem}\label{lem:dense-translations}
Let \(\bv_1,\ldots,\bv_r\in\T^m\), and suppose that
\(\Gamma:=\sum_{j=1}^r\Z\bv_j\) is dense in \(\T^m\).  If a measurable set
\(E\subset\T^m\) satisfies
\[
 E-\bv_j\subset E\qquad(1\le j\le r),
\]
then \(E\) is null or conull.
\end{lem}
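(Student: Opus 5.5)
The plan is to use Fourier analysis on \(\T^m\) via the indicator function \(f=\mathbf 1_E\in L^2(\T^m)\). First I upgrade the one-sided inclusion to an equality up to null sets. Translation preserves Lebesgue measure, so \(\mathcal L^m(E-\bv_j)=\mathcal L^m(E)\). Combined with \(E-\bv_j\subset E\), this shows that \(E\setminus(E-\bv_j)\) is null. Hence \(E-\bv_j=E\) modulo null sets, and so \(f(\bx+\bv_j)=f(\bx)\) for almost every \(\bx\) and each \(j\). (Here \(\mathbf 1_{E-\bv_j}(\bx)=\mathbf 1_E(\bx+\bv_j)\).) Measure preservation again shows that the set of \(\gamma\) with \(f(\cdot+\gamma)=f\) a.e. is a subgroup of \(\T^m\). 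It contains every \(\bv_j\), so it contains \(\Gamma\).

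Next I compare Fourier coefficients. For \(\xi\in\Z^m\), invariance under \(\gamma\in\Gamma\) gives
\[
 \hat f(\xi)=e^{2\pi i\langle\xi,\gamma\rangle}\hat f(\xi).
\]
Suppose \(\hat f(\xi)\ne0\) for some \(\xi\ne0\). Then \(e^{2\pi i\langle\xi,\gamma\rangle}=1\) for all \(\gamma\in\Gamma\). The character \(\chi_\xi(\bx)=e^{2\pi i\langle\xi,\bx\rangle}\) is continuous, and \(\Gamma\) is dense, so \(\chi_\xi\equiv1\) on \(\T^m\). This is impossible for \(\xi\ne0\). Hence \(\hat f(\xi)=0\) for all \(\xi\ne0\).

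By Plancherel (completeness of characters), \(f\) equals the constant \(\hat f(0)=\mathcal L^m(E)\) almost everywhere. Since \(f\) takes only the values \(0\) and \(1\), this constant is \(0\) or \(1\). Therefore \(E\) is null or conull.

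No step is a serious obstacle. The only point needing care is the first one, passing from the one-sided inclusion to genuine a.e. invariance, and measure preservation of translations handles it.
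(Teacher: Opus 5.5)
Your proposal is correct and follows essentially the same route as the paper: upgrade the one-sided inclusions to almost-everywhere invariance via measure preservation, then use Fourier coefficients, density of \(\Gamma\), and continuity of characters to show \(f=\mathbf 1_E\) is a.e.\ constant. The only cosmetic difference is that the paper applies the coefficient identity to each generator \(\bv_j\) and then extends \(e^{2\pi i\mathbf k\cdot\gamma}=1\) to all of \(\Gamma\) by multiplicativity of the character. You instead first show that the a.e.\ stabilizer of \(f\) is a subgroup containing \(\Gamma\); both are fine.
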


\begin{proof}
By translation invariance of Lebesgue measure, each inclusion is an equality
modulo null sets.  The indicator \(f:=\mathbf1_E\) satisfies
\(f(\,\cdot+\bv_j)=f\) almost everywhere for every \(j\).  The Fourier
coefficients of \(f\) are
\[
 \widehat f(\mathbf k)
 :=\int_{\T^m}f(\bx)e^{-2\pi i\mathbf k\cdot\bx}
 \,d\mathcal L^m(\bx)\qquad(\mathbf k\in\Z^m).
\]
For every \(j\), a change of variables gives
\[
 \bigl(1-e^{2\pi i\mathbf k\cdot\bv_j}\bigr)
 \widehat f(\mathbf k)=0.
\]
Fix \(\mathbf k\ne0\).  If \(\widehat f(\mathbf k)\ne0\), the identity
above would force
\(e^{2\pi i\mathbf k\cdot\bv_j}=1\) for every \(j\), and hence
\(e^{2\pi i\mathbf k\cdot\gamma}=1\) for every \(\gamma\in\Gamma\).
Since \(\Gamma\) is dense and the character
\(\bx\mapsto e^{2\pi i\mathbf k\cdot\bx}\) is continuous, it would follow
that this character is identically \(1\) on \(\T^m\), contradicting
\(\mathbf k\ne0\).  Hence \(\widehat f(\mathbf k)=0\) for every
\(\mathbf k\ne0\).  Uniqueness of Fourier series gives
\(f=\widehat f(\mathbf0)\) almost everywhere.  Since \(f\) is an indicator
function, \(E\) is null or conull.
\end{proof}

We now deduce the fixed-matrix zero--one law.

\begin{prop}\label{prop:zero-one}
Let \(A\in\Mat_{m,n}\), and let \(\psi(N)\to0\) be non-increasing.
Then
\[
 \mathcal L^m( W_A(\psi))\in\{0,1\}.
\]
\end{prop}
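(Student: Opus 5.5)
The plan is to reduce to Lemma~\ref{lem:dense-translations}, with the translation vectors \(\bv_j:=\langle A\mathbf e_j\rangle\) (\(1\le j\le n\)) together with the standard basis vectors of \(\Z^m\), which are trivial in \(\T^m\). Put \(\Gamma:=\sum_j\Z\bv_j=\langle A\Z^n\rangle\subset\T^m\). There are two cases, according to whether \(\Gamma\) is dense.

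\emph{Case 1: \(\Gamma\) is not dense.} Then \(\overline\Gamma\) is a proper closed subgroup of \(\T^m\): a finite union of translates of a subtorus of dimension \(<m\). It is therefore null. Since \(\psi(N)\to0\), every \(\bb\in W_A(\psi)\) satisfies \(\norm{A\bq-\bb}_{\Z^m}\to0\) along infinitely many distinct \(\bq\), so \(\bb\in\overline\Gamma\). Hence \(W_A(\psi)\subset\overline\Gamma\) is null.

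\emph{Case 2: \(\Gamma\) is dense.} I want one-sided invariance of a suitable set under each \(\bv_j\), up to sign. The proof of Lemma~\ref{lem:dense-translations} only uses \(\mathbf1_E(\cdot+\bv_j)=\mathbf1_E\) a.e., and this follows from either \(E-\bv_j\subset E\) or \(E+\bv_j\subset E\). The key elementary observation concerns the sup norm: for \(\bq\neq0\) and a sign \(\sigma\in\{\pm1\}\),
\[
 \norm{\bq-\sigma\mathbf e_j}\le\norm{\bq}
 \quad\text{whenever } \sigma q_j\ge1 \text{ or } q_j=0 .
\]
Moreover, \(A(\bq-\sigma\mathbf e_j)-(\bb-\sigma\bv_j)=A\bq-\bb\). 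Because \(\psi\) is non-increasing, a solution \(\bq\) for \(\bb\) with \(\sigma q_j\ge0\) therefore yields the solution \(\bq-\sigma\mathbf e_j\) for \(\bb-\sigma\bv_j\), and distinct \(\bq\) give distinct shifted vectors. The natural sets to use are the orthant pieces
\[
 G_\varepsilon:=\{\bb:\ \text{infinitely many solutions } \bq \text{ with } \varepsilon_jq_j\ge0\ \forall j\},
 \qquad \varepsilon\in\{\pm1\}^n,
\]
since \(W_A(\psi)=\bigcup_\varepsilon G_\varepsilon\) is a finite union. If each \(G_\varepsilon\) satisfies \(G_\varepsilon-\varepsilon_j\bv_j\subset G_\varepsilon\) for all \(j\), then the lemma makes each \(G_\varepsilon\) null or conull, and so is \(W_A(\psi)\).

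The main obstacle is that the shift must keep the new solutions inside the same orthant. When \(\varepsilon_jq_j\ge1\), the shifted vector \(q_j-\varepsilon_j\) still satisfies \(\varepsilon_j(q_j-\varepsilon_j)\ge0\). When \(q_j=0\), however, it leaves the closed orthant \(\varepsilon\) and enters the one with \(\varepsilon_j\) flipped. I would first handle this by refining the decomposition according to the set \(S\subset\{1,\dots,n\}\) of coordinates that are unbounded along the solution sequence. Passing to a subsequence of solutions, the coordinates outside \(S\) are eventually constant, and those in \(S\) have fixed signs with \(\norm{q_j}\to\infty\). For \(j\in S\), the shift by \(-\varepsilon_j\bv_j\) then preserves the pattern, because \(\norm{q_j}\ge2\) eventually. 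For \(j\notin S\), with \(q_j\) eventually equal to a constant \(c_j\), the shift by \(\pm\bv_j\) only changes that constant. So I would define the pieces by the set \(S\) and the signs on \(S\), allowing the bounded coordinates to take any eventual values. Each such piece is then one-sidedly invariant under every \(\bv_j\).

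The remaining point to check is the norm inequality for \(j\notin S\). Since \(\norm{\bq}\to\infty\) is attained on \(S\), we have \(\norm{\bq\mp\mathbf e_j}=\norm{\bq}\) for all large solutions, so monotonicity of \(\psi\) again suffices. There are finitely many pieces, each null or conull, and their union is \(W_A(\psi)\). This gives \(\mathcal L^m(W_A(\psi))\in\{0,1\}\).
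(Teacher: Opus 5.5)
Your proof is correct, and it takes a genuinely different route from the paper's.

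\emph{The paper's route.} The paper decomposes $W_A(\psi)$ into countably many pieces $E_{I,\boldsymbol{\varepsilon},\mathbf c}$, in which the bounded coordinates are also frozen at prescribed values $\mathbf c$. It then shifts only along the unbounded coordinates $i\in I$, always toward $0$. So the relevant group is $\Gamma_I=\sum_{i\in I}\Z\langle A\mathbf e_i\rangle$. For each $I$ the paper must split according to whether $\Gamma_I$ is dense. When it is not, it uses $\psi\to0$ to place the piece inside the null coset $\bx_{\mathbf c}+\overline{\Gamma_I}$.

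\emph{Your route.} You take the union over the eventual constants. This makes each piece invariant also under $\pm\langle A\mathbf e_j\rangle$ for the bounded coordinates $j\notin S$. The reason is that the sup norm is eventually attained on $S$, so $\norm{\bq\mp\mathbf e_j}=\norm{\bq}$. This is what lets you work with the full group $\langle A\Z^n\rangle$. You get a finite decomposition and a single density dichotomy, and you dispose of the non-dense case globally in one line.

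\emph{What each buys.} Your step uses the max structure of the norm. For the Euclidean norm, say, shifting a coordinate equal to $0$ strictly increases $\norm{\bq}$, and the monotonicity argument breaks. The paper's shifts only decrease the absolute value of one coordinate. So its argument survives replacing the sup norm by any norm that is monotone in the absolute values of the coordinates.

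\emph{One detail to add.} Lemma~\ref{lem:dense-translations} requires $G_{S,\varepsilon}$ to be measurable, and you should say why it is. It is the countable union over $\mathbf c\in\Z^{S^c}$ of the $G_\delta$ sets $\bigcap_{R\ge1}\bigcup_{\bq}B(\langle A\bq\rangle,\psi(\norm{\bq}))$. Here $\bq$ ranges over vectors with $q_j=c_j$ for $j\notin S$ and $\varepsilon_jq_j\ge R$ for $j\in S$. This set coincides with your sequential definition of the piece.
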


\begin{proof}
We first decompose according to which coordinates escape to infinity.
Write \([n]=\{1,\ldots,n\}\).
For \(\varnothing\ne I\subset[n]\),
\(\boldsymbol{\varepsilon}=(\varepsilon_i)_{i\in I}\in\{\pm1\}^{I}\), and
\(\mathbf c=(c_j)_{j\notin I}\in\Z^{I^c}\), set
\[
 \mathcal Q_R(I,\boldsymbol{\varepsilon},\mathbf c)
 :=\set{\bq\in\Z^n:
          q_j=c_j\ (j\notin I),\ \varepsilon_iq_i\ge R\ (i\in I)}
\]
For this set, define
\[
 E_{I,\boldsymbol{\varepsilon},\mathbf c}
 :=\bigcap_{R=1}^{\infty}
   \bigcup_{\bq\in\mathcal Q_R(I,\boldsymbol{\varepsilon},\mathbf c)}
   B(\langle A\bq\rangle,\psi(\norm{\bq})).
\]
\smallskip
\noindent\textbf{Claim 1.}
We have the decomposition
\begin{equation}\label{eq:claim-one-decomposition}
  W_A(\psi)=
 \bigcup_{\varnothing\ne I\subset[n]}
 \ \bigcup_{\boldsymbol{\varepsilon}\in\{\pm1\}^I}
 \ \bigcup_{\mathbf c\in\Z^{I^c}}
 E_{I,\boldsymbol{\varepsilon},\mathbf c}.
\end{equation}
\begin{proof}[Proof of Claim 1]
To prove the inclusion \(\subseteq\), suppose that
\(\bb\in W_A(\psi)\), and choose pairwise distinct solutions
\(\bq^{(s)}\) of \eqref{eq:psi-approximable}, so that
\[
 \bb\in B(\langle A\bq^{(s)}\rangle,\psi(\lvert\bq^{(s)}\rvert))
 \qquad(s\ge1).
\]
Successively passing to subsequences in
each coordinate, we may arrange that every \(q_i^{(s)}\) is either constant
or tends to infinity with a fixed sign.  Let \(I\) be the set of coordinates
of the latter type, let \(\boldsymbol{\varepsilon}\) record their signs, and let
\(\mathbf c\) record the constant coordinates.  At least one coordinate is of
the latter type, since otherwise \(\bq^{(s)}\) would be constant in \(s\).
Hence \(I\ne\varnothing\).  For every \(R\), all
sufficiently large \(s\) satisfy
\(\bq^{(s)}\in\mathcal Q_R(I,\boldsymbol{\varepsilon},\mathbf c)\); hence
\(\bb\in E_{I,\boldsymbol{\varepsilon},\mathbf c}\).

For the inclusion \(\supseteq\), suppose that
\(\bb\in E_{I,\boldsymbol{\varepsilon},\mathbf c}\).  Then for every \(R\) there is
\(\bq_R\in\mathcal Q_R(I,\boldsymbol{\varepsilon},\mathbf c)\) such that
\[
 \bb\in B(\langle A\bq_R\rangle,\psi(\norm{\bq_R})).
\]
Since
\(I\ne\varnothing\), we have \(\norm{\bq_R}\ge R\).  Thus the collection
\(\{\bq_R:R\ge1\}\) is infinite and consists of nonzero vectors, so
\(\bb\in W_A(\psi)\).
\end{proof}

The union in \eqref{eq:claim-one-decomposition} is countable.  Hence, to prove
the proposition, it suffices to establish the following claim.

\smallskip
\noindent\textbf{Claim 2.}
For each triple \((I,\boldsymbol{\varepsilon},\mathbf c)\) as above, the set
\(E_{I,\boldsymbol{\varepsilon},\mathbf c}\) is null or conull.

\begin{proof}[Proof of Claim 2]
Fix a triple \((I,\boldsymbol{\varepsilon},\mathbf c)\), and abbreviate the
corresponding sets by \(\mathcal Q_R\) and \(E\).  Put
\[
 \Gamma_I:=\sum_{i\in I}\Z\langle A\mathbf e_i\rangle,
 \qquad H_I:=\overline{\Gamma_I},
 \qquad
 \bx_{\mathbf c}:=\left\langle
     \sum_{j\notin I}c_jA\mathbf e_j\right\rangle.
\]
For every \(\bq\in\mathcal Q_R\), we have
\[
 \langle A\bq\rangle
 =\bx_{\mathbf c}+\sum_{i\in I}q_i\langle A\mathbf e_i\rangle
 \in\bx_{\mathbf c}+\Gamma_I.
\]
Moreover, if \(\bb\in E\), then for every \(R\) there is a
\(\bq_R\in\mathcal Q_R\) such that
\[
 \operatorname{dist}(\bb,\bx_{\mathbf c}+H_I)
 \le \psi(\norm{\bq_R})\le\psi(R).
\]
Since \(\psi(R)\to0\), we obtain
\(E\subset\bx_{\mathbf c}+H_I\).  If \(H_I\ne\T^m\), this proper closed
coset has Lebesgue measure zero, and hence \(E\) is null.

It remains to consider \(H_I=\T^m\).  Fix \(i\in I\).  For
\(\bq\in\mathcal Q_{R+1}\), put
\[
 \bq'=\bq-\varepsilon_i\mathbf e_i.
\]
Then \(\bq'\in\mathcal Q_R\) and
\(\norm{\bq'}\le\norm{\bq}\).  With
\(\bv_i:=\varepsilon_i\langle A\mathbf e_i\rangle\), translation invariance of
the metric and monotonicity of \(\psi\) give
\begin{align*}
 B(\langle A\bq\rangle,\psi(\norm{\bq}))-\bv_i
 &=B(\langle A\bq'\rangle,\psi(\norm{\bq}))\\
 &\subset B(\langle A\bq'\rangle,\psi(\norm{\bq'})).
\end{align*}
Taking the union over \(\bq\in\mathcal Q_{R+1}\), and then the intersection
over \(R\), gives
\[
 E-\bv_i\subset E.
\]
Since \(i\in I\) was arbitrary and
\(\sum_{i\in I}\Z\bv_i=\Gamma_I\) is dense in \(\T^m\),
Lemma~\ref{lem:dense-translations} shows that \(E\) is null or conull.
\end{proof}

Claims~1 and~2 complete the proof.
\end{proof}

\section{Vitali covering estimates and proof reduction}
\label{sec:vitali}

We now begin the proof of Theorem~\ref{thm:main}.  We first establish the
Vitali covering estimates used in both the convergence and divergence
arguments.

For \(r>0\) and \(T\ge1\), consider the family
\begin{equation*} \mathcal P_A(r,T):=
 \bigl\{B(\langle A\bq\rangle,r):
 \bq\in[-T,T]^n\cap\Z^n\bigr\}.
\end{equation*}
By the Vitali covering lemma, we may choose a pairwise disjoint
subfamily
\[
 \mathcal G_A(r,T)\subset\mathcal P_A(r,T)
\]
such that
\begin{equation}\label{eq:vitali-cover}
 \bigcup_{B\in\mathcal G_A(r,T)}B
 \subset\bigcup_{B\in\mathcal P_A(r,T)}B
 \subset\bigcup_{B\in\mathcal G_A(r,T)}3B,
\end{equation}
where \(3B\) has the same center as \(B\) and three times its radius.

We shall use the following estimates.

\begin{lem}\label{lem:cumulative-mass}
Uniformly in \(A\in\Mat_{m,n}\), \(0<r<1/4\), and \(T\ge1\),
\begin{equation}\label{eq:vitali-cardinality}
 \#\mathcal G_A(r,T)\asymp\frac{T^n}{N_A(r,T)},
\end{equation}
and
\begin{equation*} \mathcal L^m\!\left(\bigcup_{B\in\mathcal G_A(r,T)}B\right)
 \asymp
 \mathcal L^m\!\left(\bigcup_{B\in\mathcal P_A(r,T)}B\right)
 \asymp M_A(r,T).
\end{equation*}
\end{lem}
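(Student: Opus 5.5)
The plan is to compare everything with the number of centers \(\#\{\bq\in[-T,T]^n\cap\Z^n\}\asymp T^n\) and with the local multiplicity of centers near a given center. The key quantity is, for a fixed \(\bq_0\) with \(\norm{\bq_0}\le T\),
\[
 \nu(\bq_0):=\#\set{\bq\in\Z^n:\norm{\bq}\le T,\ \norm{A\bq-A\bq_0}_{\Z^m}< cr},
\]
for a fixed constant \(c\). Differences \(\bq-\bq_0\) lie in \([-2T,2T]^n\), and together with the integer vector realizing \(\norm{\cdot}_{\Z^m}\) they give lattice points of \(\LL_A\cap\mathcal B(cr,2T)\). Since \(r<1/4\), the integer vector is unique for \(c\le 2\), so the map is injective. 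Hence \(\nu(\bq_0)\le N_A(cr,2T)\asymp N_A(r,T)\) by Lemma~\ref{lem:dilation}. For the lower bound I will use a pigeonhole/half-box argument: for lattice points \((\bp,\bq)\in\LL_A\cap\mathcal B(r/2,T/2)\), the vector \(\bq_0+\bq\) stays within the box \([-T,T]^n\) whenever \(\norm{\bq_0}\le T/2\). Thus \(\nu(\bq_0)\ge N_A(r/2,T/2)\gg N_A(r,T)\) for the \(\gg T^n\) centers with \(\norm{\bq_0}\le T/2\). If the \(\bq\)'s are not distinct as vectors in \(\Z^n\) (the same \(\bq\) with two \(\bp\)'s), the bound \(r<1/4\) again excludes this. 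I also need the multiplicity of \emph{distinct points} \(\langle A\bq\rangle\) versus distinct \(\bq\), which is handled by counting \(\bq\) with multiplicity throughout.

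\emph{Upper bound for \(\#\mathcal G\).} The balls in \(\mathcal G_A(r,T)\) are disjoint, so their centers are \(2r\)-separated in \(\T^m\). Around each selected center \(\langle A\bq_0\rangle\), consider the set \(S(\bq_0)\) of \(\bq\), \(\norm{\bq}\le T\), with \(\langle A\bq\rangle\in B(\langle A\bq_0\rangle,r/2)\)... but this only gives \(\#S(\bq_0)\ge1\). Instead I will argue by averaging: each \(\bq\) with \(\norm{\bq}\le T\) lies within distance \(3r\) of some selected center, since the cover property \eqref{eq:vitali-cover} holds at the centers. Hence
\[
T^n\asymp\sum_{\bq_0\in\mathcal G}\#\set{\bq:\norm{A\bq-A\bq_0}_{\Z^m}<3r}\le\#\mathcal G\cdot N_A(6r,2T),
\]
which gives \(\#\mathcal G\gg T^n/N_A(r,T)\) after Lemma~\ref{lem:dilation}. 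For the reverse inequality I need many centers inside each selected ball, but the selected \(\bq_0\) might lie near the boundary of the box. So I will instead bound the measure. The balls \(B(\langle A\bq\rangle,r)\) over all \(\norm{\bq}\le T\) cover each point at most \(\max\nu\ll N_A(r,T)\) times, while every point in a ball \(B(\langle A\bq_0\rangle, r/4)\) with \(\norm{\bq_0}\le T/2\) is covered at least \(\gg N_A(r,T)\) times by the previous lower bound. I expect making this two-sided multiplicity count precise to be the main obstacle, in particular passing from \(\bq_0\) with \(\norm{\bq_0}\le T/2\) to all centers.

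\emph{Measures.} The union \(U\) of \(\mathcal P_A(r,T)\) satisfies
\[
\mathcal L^m(U)\cdot\sup\text{mult}\ge\sum_{\bq}\mathcal L^m(B(\langle A\bq\rangle,r))\asymp T^nr^m,
\]
so \(\mathcal L^m(U)\gg T^nr^m/N_A(r,T)=M_A(r,T)\). Conversely, \eqref{eq:vitali-cover} gives \(\mathcal L^m(U)\le3^m(2r)^m\#\mathcal G\), and the measure of the disjoint union is exactly \((2r)^m\#\mathcal G\), valid since \(r<1/4\). Thus the two measures and \(r^m\#\mathcal G\) are all comparable, and it remains to show \(\#\mathcal G\ll T^n/N_A(r,T)\), equivalently \(\mathcal L^m(U)\ll M_A(r,T)\). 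For this I apply the lower multiplicity bound to the shrunken family over \(\norm{\bq_0}\le T/2\) with radius \(r/4\). Because the pairwise disjoint selected balls are each hit by \(\gg N_A(r,T)\) distinct \(\bq\) (via a \(\bq_0\) in the half box near each, using the cover by the half-box family at radius \(r\) together with Lemma~\ref{lem:dilation}), we get \(\#\mathcal G\cdot N_A(r,T)\ll T^n\), up to replacing \(T\) by \(2T\) and using dilation invariance.
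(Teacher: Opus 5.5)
Your argument is correct and is essentially the paper's. The lower bound $\#\mathcal G_A(r,T)\gg T^n/N_A(r,T)$, obtained by counting for each selected ball $B$ the $\bq$ with $\langle A\bq\rangle\in 3B$, is identical. Your closing move to scale $2T$ is exactly the paper's upper bound: translate each selected center $\bs$ by the second coordinates $\bq'$ of the $N_A(r/2,T)$ points of $\LL_A\cap\mathcal B(r/2,T)$; the translates land in the pairwise disjoint selected balls and in $[-2T,2T]^n$, so $\#\mathcal G_A(r,T)\,N_A(r/2,T)\ll T^n$.

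Consequently the boundary worry, the multiplicity detour, and the search for a half-box center near each selected ball are unnecessary (that search would in fact fail at the original scale $T$). Just take $\bq_0=\bs$ and let the translates leave $[-T,T]^n$.
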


\begin{proof}
We first prove \eqref{eq:vitali-cardinality}.  For the lower bound, the
second inclusion in \eqref{eq:vitali-cover} shows that, for every
\(\bq\in[-T,T]^n\cap\Z^n\), there is \(B\in\mathcal G_A(r,T)\) such
that \(\langle A\bq\rangle\in3B\).  Hence
\[
 \#\bigl([-T,T]^n\cap\Z^n\bigr)
 \le\sum_{B\in\mathcal G_A(r,T)}
 \#\set{\bq\in[-T,T]^n\cap\Z^n:
          \langle A\bq\rangle\in3B}.
\]
Fix \(B\in\mathcal G_A(r,T)\), and write
\[
 B=B(\langle A\bs\rangle,r),
 \qquad \bs\in[-T,T]^n\cap\Z^n.
\]
If \(\langle A\bq\rangle\in3B\), then there is \(\bp\in\Z^m\) such
that
\[
 u_A\binom{-\bp}{\bq-\bs}
 =\binom{A(\bq-\bs)-\bp}{\bq-\bs}
 \in\LL_A\cap\mathcal B(3r,2T).
\]
Distinct \(\bq\)'s give distinct second coordinates
\(\bq-\bs\).  Hence
\[
 \#\set{\bq\in[-T,T]^n\cap\Z^n:\langle A\bq\rangle\in3B}
 \le \#\bigl(\LL_A\cap\mathcal B(3r,2T)\bigr)
 =N_A(3r,2T).
\]
Lemma~\ref{lem:dilation} therefore gives
\[
 T^n\asymp\#\bigl([-T,T]^n\cap\Z^n\bigr)
 \le\#\mathcal G_A(r,T)\,N_A(3r,2T)
 \asymp\#\mathcal G_A(r,T)\,N_A(r,T),
\]
which is the lower bound in \eqref{eq:vitali-cardinality}.

For the upper bound, for each \(B\in\mathcal G_A(r,T)\), choose
\(\bs_B\in[-T,T]^n\cap\Z^n\) and \(\bp_B\in\Z^m\) such that
\[
 B=B(\langle A\bs_B\rangle,r),
 \qquad
 A\bs_B-\bp_B\in[0,1)^m.
\]
Then
\[
 u_A\binom{-\bp_B}{\bs_B}
 =\binom{A\bs_B-\bp_B}{\bs_B}\in\LL_A.
\]
Translation by this lattice point therefore gives
\[
 \#\left(
 \LL_A\cap
 \left[
 \binom{A\bs_B-\bp_B}{\bs_B}+\mathcal B(r/2,T)
 \right]\right)
 =N_A(r/2,T).
\]
The first-coordinate projection of the translated box is
\[
 A\bs_B-\bp_B+[-r/2,r/2]^m.
\]
Its image in \(\T^m\) is contained in \(B\).  Hence these translated
boxes are pairwise disjoint as \(B\) ranges over
\(\mathcal G_A(r,T)\).  Moreover, since
\(A\bs_B-\bp_B\in[0,1)^m\), \(\norm{\bs_B}\le T\), and \(r<1/4\),
\[
 \binom{A\bs_B-\bp_B}{\bs_B}+\mathcal B(r/2,T)
 \subset[-2,2]^m\times[-2T,2T]^n.
\]
Consequently,
\[
\begin{aligned}
 &\bigsqcup_{B\in\mathcal G_A(r,T)}
 \left(
 \LL_A\cap
 \left[
 \binom{A\bs_B-\bp_B}{\bs_B}+\mathcal B(r/2,T)
 \right]\right)\\
 &\qquad\subset
 \LL_A\cap\bigl([-2,2]^m\times[-2T,2T]^n\bigr).
\end{aligned}
\]
Every point of the set on the right has the form
\((A\bq+\bp,\bq)\).  There are \(O(T^n)\) choices for
\(\bq\in[-2T,2T]^n\cap\Z^n\), and, for fixed \(\bq\), only
\(O(1)\) choices of \(\bp\in\Z^m\) for which
\(A\bq+\bp\in[-2,2]^m\).  Taking cardinalities therefore gives
\[
 \#\mathcal G_A(r,T)\,N_A(r/2,T)\ll T^n.
\]
Together with Lemma~\ref{lem:dilation}, this proves the upper bound in
\eqref{eq:vitali-cardinality}.

The selected balls are disjoint, so
\[
 \mathcal L^m\!\left(\bigcup_{B\in\mathcal G_A(r,T)}B\right)
 =\#\mathcal G_A(r,T)\,(2r)^m\asymp M_A(r,T).
\]
Together with the two inclusions in \eqref{eq:vitali-cover}, this also
gives
\[
 \mathcal L^m\!\left(\bigcup_{B\in\mathcal P_A(r,T)}B\right)
 \asymp M_A(r,T).
\]
\end{proof}

\subsection{Proof of the convergence cases}
\label{subsec:convergence-case}

\begin{proof}[Proof of the convergence parts of
Theorem~\ref{thm:main} and Corollary~\ref{cor:hausdorff}]
\leavevmode\par\noindent
For any
\(\bb\in W_A(\psi)\), the set of solutions \(\bq\) to
\eqref{eq:psi-approximable} is unbounded.  If
\(T_k\le\norm{\bq}<T_{k+1}\), then monotonicity gives
\[
 \norm{A\bq-\bb}_{\Z^m}<\psi(\norm{\bq})\le r_k.
\]
Therefore
\[
  W_A(\psi)
 \subset
 \limsup_{k\to\infty}
 \bigcup_{B\in\mathcal P_A(r_k,T_{k+1})}B
 \subset
 \limsup_{k\to\infty}
 \bigcup_{B\in\mathcal G_A(r_k,T_{k+1})}3B.
\]
For all sufficiently large \(k\), Lemmata~\ref{lem:cumulative-mass}
and~\ref{lem:dilation} give
\begin{align*}
 \mathcal L^m\!\left(
  \bigcup_{B\in\mathcal P_A(r_k,T_{k+1})}B
 \right)
 &\asymp M_A(r_k,T_k),\\
 \#\mathcal G_A(r_k,T_{k+1})
 &\asymp \frac{T_k^n}{N_A(r_k,T_k)}.
\end{align*}
If \(\sum_kM_A(r_k,T_k)<\infty\), the first estimate and the first
Borel--Cantelli lemma give
\(\mathcal L^m( W_A(\psi))=0\), proving the convergence part of
Theorem~\ref{thm:main}.  If the first series in
\eqref{eq:Hausdorff-cases} converges, the second estimate gives
\[
 \sum_k\#\mathcal G_A(r_k,T_{k+1})(6r_k)^s
 \ll\sum_k\frac{T_k^nr_k^s}{N_A(r_k,T_k)}<\infty.
\]
Since the covering balls \(3B\) have diameter at most \(6r_k\to0\),
the Hausdorff--Cantelli lemma \cite[Lemma~3.10]{BD99} gives
\(\mathcal H^s( W_A(\psi))=0\), proving the convergence part of
Corollary~\ref{cor:hausdorff}.
\end{proof}

\subsection{Proof of Theorem~\ref{thm:main}: the divergence case}

For each \(k\), consider the set
\[
 E_k:=\bigcup_{B\in\mathcal G_A(r_k,T_k)}B.
\]

We follow the classical quasi-independence-on-average strategy.  The
following lemma provides a lower bound for the measure of a limsup set
and is a convenient form of the divergent Borel--Cantelli lemma; see
\cite[Lemma~5]{Spr79}.

\begin{lem}\label{lem:generalized-BC}
Let \((F_k)\) be measurable subsets of a probability space and suppose
that \(\sum_k\mu(F_k)=\infty\).  Then
\begin{equation*} \mu(\limsup_{k\to\infty}F_k)
 \ge
 \limsup_{N\to\infty}
 \frac{\bigl(\sum_{k=1}^{N}\mu(F_k)\bigr)^2}
      {\sum_{k,\ell=1}^{N}\mu(F_k\cap F_\ell)}.
\end{equation*}
\end{lem}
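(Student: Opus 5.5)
We prove Lemma~\ref{lem:generalized-BC} by the Chung--Erd\H{o}s / Paley--Zygmund second-moment argument applied to tails. Fix \(M\ge1\), and for \(N>M\) put
\[
 Z_{M,N}:=\sum_{k=M}^{N}\mathbf 1_{F_k},
 \qquad
 U_{M,N}:=\bigcup_{k=M}^{N}F_k=\{Z_{M,N}>0\}.
\]
Cauchy--Schwarz gives \(\bigl(\int Z_{M,N}\,d\mu\bigr)^2\le\mu(U_{M,N})\int Z_{M,N}^2\,d\mu\). Hence
\[
 \mu\Bigl(\bigcup_{k\ge M}F_k\Bigr)\ge
 \frac{\bigl(\sum_{k=M}^N\mu(F_k)\bigr)^2}{\sum_{k,\ell=M}^N\mu(F_k\cap F_\ell)} .
\]

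Next we compare the tail quotient with the full quotient. Write \(S_N=\sum_{k=1}^N\mu(F_k)\) and \(D_N=\sum_{k,\ell=1}^N\mu(F_k\cap F_\ell)\). The tail numerator equals \((S_N-S_{M-1})^2\), where \(S_{M-1}\) is fixed as \(N\to\infty\) and \(S_N\to\infty\). Thus \((S_N-S_{M-1})^2=S_N^2(1-o(1))\). For the denominator, the tail double sum is at most \(D_N\), since all terms are nonnegative. Therefore
\[
 \mu\Bigl(\bigcup_{k\ge M}F_k\Bigr)\ge(1-o(1))\frac{S_N^2}{D_N}.
\]
Taking \(\limsup_{N\to\infty}\) gives \(\mu(\bigcup_{k\ge M}F_k)\ge\limsup_N S_N^2/D_N\) for every \(M\).

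Finally, the sets \(\bigcup_{k\ge M}F_k\) decrease in \(M\) to \(\limsup_k F_k\), and \(\mu\) is a finite measure. Continuity from above then yields the claimed inequality.

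The only point needing care is the degenerate case \(D_N=0\). Since \(D_N\ge S_N\) (the diagonal terms alone give \(S_N\)) and \(S_N\to\infty\), we have \(D_N>0\) for all large \(N\), so the quotients are well defined eventually. I expect no real obstacle; the step requiring attention is the comparison of the tail quotient with the full quotient, which uses both the divergence \(S_N\to\infty\) and the nonnegativity of the intersection terms.
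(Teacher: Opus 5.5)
Your argument is correct. It is the standard Chung--Erd\H{o}s (Kochen--Stone) second-moment proof: Cauchy--Schwarz for $Z_{M,N}$ on the tails, dropping the finitely many initial terms via $S_N\to\infty$ and the nonnegativity of the intersection terms, then continuity from above. The paper gives no proof of its own and simply cites Sprind\v{z}uk's Lemma~5, which rests on this same second-moment argument.
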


We will use the following quasi-independence-on-average
estimate for the events \(E_k\).

\begin{prop}
\label{prop:vitali-QIA}
For every \(N\ge1\),
\begin{equation}\label{eq:vitali-QIA}
 \sum_{k,\ell=1}^{N}\mathcal L^m(E_k\cap E_\ell)
 \ll
 \left(\sum_{k=1}^{N}M_A(r_k,T_k)\right)^2
 +\sum_{k=1}^{N}M_A(r_k,T_k).
\end{equation}
\end{prop}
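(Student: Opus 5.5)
The estimate splits into the diagonal terms and the off-diagonal terms. The diagonal terms satisfy \(\mathcal L^m(E_k)\asymp M_A(r_k,T_k)\) by Lemma~\ref{lem:cumulative-mass} once \(r_k<1/4\); the finitely many initial \(k\) contribute \(O(1)\), which is absorbed after shifting indices. So \(\sum_k\mathcal L^m(E_k)\ll S_N+O(1)\). By symmetry it suffices to treat \(k<\ell\). Write \(M_k:=M_A(r_k,T_k)\) and \(M_{k,\ell}:=M_A(r_k,T_\ell)\). I would prove the estimate in two stages, following the outline in the introduction.

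\textbf{Step 1 (localized packing).} Fix \(k<\ell\). For a ball \(B=B(\langle A\bs\rangle,r_k)\in\mathcal G_A(r_k,T_k)\), count the centres \(\langle A\bq\rangle\) of balls of \(\mathcal G_A(r_\ell,T_\ell)\) meeting \(B\). Any such \(\bq\) satisfies
\[
 u_A\binom{-\bp}{\bq-\bs}\in\LL_A\cap\mathcal B(2r_k,2T_\ell),
\]
which gives at most \(N_A(2r_k,2T_\ell)\) candidate points. This bound is too crude, because the selected \(\ell\)-balls are sparse: each absorbs about \(N_A(r_\ell,T_\ell)\) orbit points. I would instead use the disjoint-translated-box argument from the upper bound in Lemma~\ref{lem:cumulative-mass}. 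Around each selected \(\ell\)-centre, the translated boxes \(\mathcal B(r_\ell/2,T_\ell)\) are disjoint and each contains \(N_A(r_\ell/2,T_\ell)\) lattice points. All of these boxes lie in \(\binom{A\bs-\bp}{\bs}+\mathcal B(3r_k,3T_\ell)\). Hence the number of \(\ell\)-balls meeting \(B\) is
\[
 \ll \frac{N_A(r_k,T_\ell)}{N_A(r_\ell,T_\ell)},
\]
using Lemma~\ref{lem:dilation}. Multiplying by \(\#\mathcal G_A(r_k,T_k)\asymp T_k^n/N_A(r_k,T_k)\) and by \(r_\ell^m\) gives
\[
 \mathcal L^m(E_k\cap E_\ell)\ll\frac{T_k^n r_\ell^m N_A(r_k,T_\ell)}{N_A(r_k,T_k)N_A(r_\ell,T_\ell)}=\frac{M_kM_\ell}{M_{k,\ell}},
\]
after checking that the powers \(r_k^m T_\ell^n\) cancel.

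\textbf{Step 2 (summing \(M_kM_\ell/M_{k,\ell}\)).} By Lemma~\ref{lem:minima-mass}, \(M_{k,\ell}\asymp\prod_i\min\{R,\lambda_i(a_t\LL_A)\}\) with \(R=(T_\ell^nr_k^m)^{1/d}\). Let \(V_{k,\ell}\) be the span of \(\LL_A\cap\mathcal B(r_k,T_\ell)\); its primitive sublattice has some rank \(j\).

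\emph{Case \(j=d\).} Minkowski's second theorem gives \(\prod_i\lambda_i\asymp1\), and every \(\lambda_i\le R\), so \(M_{k,\ell}\asymp1\). These pairs contribute at most \(\sum M_kM_\ell\le S_N^2\).

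\emph{Case \(j<d\).} Here \(N_A(r_k,T_\ell)\) is essentially the count of points of the rank-\(j\) sublattice \(\Delta\) in the box. I would bound it by \(\ll \prod_{i\le j}R/\lambda_i\asymp R^j/\operatorname{covol}(a_t\Delta)\) and use exterior-algebra estimates on \(\operatorname{covol}\) of \(\Delta\) inside the boxes \(\mathcal B(r_k,T_k)\) and \(\mathcal B(r_\ell,T_\ell)\). The goal is
\[
 \frac{M_kM_\ell}{M_{k,\ell}}\ll \varrho^{c(\ell-k)}M_k+2^{-c(\ell-k)}M_\ell,
\]
which yields \(O(S_N)\) after summing over the geometric factor. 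The mechanism is as follows. A fixed primitive \(\Delta\) of rank \(j<d\) has counts in \(\mathcal B(r,T)\) that grow at most like \(T^{n_\Delta}r^{m_\Delta}\) with \(n_\Delta+m_\Delta=j<d\), whereas the volume factor grows like \(T^nr^m\). The ratio therefore loses a factor of \(2^{-(\ell-k)}\) or \(\varrho^{\ell-k}\) in the missing direction.

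The main obstacle is Step 2 for proper sublattices. I would first try to compare \(N_A(r_k,T_\ell)\) with \(N_A(r_k,T_k)\) and \(N_A(r_\ell,T_\ell)\) via the submultiplicativity of the box counts of \(\Delta\) (a Pl\"ucker-coordinate bound \(\#(\Delta\cap\mathcal B)\asymp\max_{\text{faces}}\prod\)), and then extract the decay from \eqref{eq:radius-decay}.
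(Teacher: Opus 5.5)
\textbf{Where the proposal breaks.} Your Step~1 and the full-rank case match the paper: these are Lemmas~\ref{lem:localized-vitali} and~\ref{lem:mixed-packet}, and the last assertion of Lemma~\ref{lem:wedge-mass}. The gap is the per-pair target in the proper case,
\[
 \frac{M_kM_\ell}{M_{k,\ell}}\ll\varrho^{c(\ell-k)}M_k+2^{-c(\ell-k)}M_\ell .
\]
This inequality is false in general.

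\textbf{A counterexample.} Take \(m=n=1\) and \(\psi(q)=1/q\), so \(\varrho=1/2\), \(r_j=2^{-j}\) and \(T_j=2^j\). Let \(\alpha\) have unbounded partial quotients, and let \(q_s\) be a convergent denominator with \(q_{s+1}/q_s\) large. Put \(\delta=\norm{q_s\alpha}_{\Z}\asymp1/q_{s+1}\), and choose \(j^*\) with \(2^{2j^*}\asymp q_s/\delta\). Set \(k=j^*-a\) and \(\ell=j^*+a\), where \(2^a\le c\sqrt{q_{s+1}/q_s}\).

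Each of the boxes \(\mathcal B(r_k,T_k)\), \(\mathcal B(r_\ell,T_\ell)\) and \(\mathcal B(r_k,T_\ell)\) contains \((q_s\alpha-p_s,q_s)\). Any lattice vector in such a box independent of it would give \(1\le q_sr+T\delta<1\), so each box sees only multiples of this vector. Counting those multiples gives
\[
 M_k\asymp M_\ell\asymp M_{k,\ell}\asymp2^a\sqrt{q_s/q_{s+1}}.
\]
Hence \(M_kM_\ell/M_{k,\ell}\asymp M_k\asymp M_\ell\), while \(\ell-k=2a\) is unbounded along such \(s\).

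Your heuristic that the ratio ``loses a factor in the missing direction'' fails exactly here. In the notation of \eqref{eq:Phi}, the dominant Pl\"ucker component of \(\Gamma\) at \((r_k,T_k)\) already carries all \(T\)-directions (\(a_k=0\)). The dominant one at \((r_\ell,T_\ell)\) carries all \(r\)-directions (\(b_\ell=0\)). So neither \(T_\ell/T_k\) nor \(r_k/r_\ell\) enters \(\Phi_\Gamma(r_k,T_\ell)\), and there is no decay in \(\ell-k\).

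\textbf{What is missing.} These residual pairs cannot be bounded pair by pair, only collectively for a fixed sublattice. The paper's fix has two ingredients, and your Step~2 has neither.

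First, group pairs by \(\Gamma=\Gamma_{k,\ell}\). For fixed \(\Gamma\) in the residual case, use
\[
 \Phi_\Gamma(r_k,T_\ell)\ge\max\{\Phi_\Gamma(r_k,T_k),\Phi_\Gamma(r_\ell,T_\ell)\}.
\]
Here \(\Phi_\Gamma(r_k,T_k)\propto r_k^{d-\sigma}\) decays geometrically along left endpoints, and \(\Phi_\Gamma(r_\ell,T_\ell)\propto T_\ell^{d-\sigma}\) grows geometrically along right endpoints. Charge each pair to the endpoint with the larger \(\Phi\) and sum the resulting geometric series (Lemma~\ref{lem:fixed-module}). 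This yields only \(\ll\sum_{j\in\partial\mathcal R_\Gamma}M_j\) for each sublattice.

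Second, you need a multiplicity bound to sum over \(\Gamma\): each index \(j\) is an endpoint for at most \(2d\) distinct proper \(\Gamma_{k,\ell}\). This follows from the nestedness of \(\mathcal B(r_j,T_\ell)\) in \(\ell\) and of \(\mathcal B(r_k,T_j)\) in \(k\), which forces chains of primitive sublattices (Lemma~\ref{lem:incidence}). With both pieces the proper pairs contribute \(O(S_N)\); without them the proof does not close.
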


The proof of Proposition~\ref{prop:vitali-QIA} is given in
Sections~\ref{sec:mixed} and \ref{sec:modules}.  Assuming
Proposition~\ref{prop:vitali-QIA}, we now prove the divergence
part of Theorem~\ref{thm:main}.

\begin{proof}[Proof of the divergent case of
Theorem~\ref{thm:main}]
Suppose that
\(\sum_kM_A(r_k,T_k)=\infty\).  By
Lemma~\ref{lem:cumulative-mass},
\(\sum_k\mathcal L^m(E_k)=\infty\).  Applying
Lemma~\ref{lem:generalized-BC} to \((E_k)_{k\ge1}\) and using
Lemma~\ref{lem:cumulative-mass} together with \eqref{eq:vitali-QIA},
we obtain
\[
 \frac{\bigl(\sum_{k=1}^{N}\mathcal L^m(E_k)\bigr)^2}
      {\sum_{k,\ell=1}^{N}\mathcal L^m(E_k\cap E_\ell)}
 \gg
 \frac{\left(\sum_{k=1}^{N}M_A(r_k,T_k)\right)^2}
 {\left(\sum_{k=1}^{N}M_A(r_k,T_k)\right)^2
  +\sum_{k=1}^{N}M_A(r_k,T_k)}
\]
for all sufficiently large \(N\).
Since \(\sum_{k=1}^{N}M_A(r_k,T_k)\to\infty\), it follows that
\begin{equation}\label{eq:positive-vitali-limsup}
 \mathcal L^m(\limsup_{k\to\infty}E_k)>0.
\end{equation}

There is one minor distinction from the classical limsup decomposition.
Let \(\bb\in\limsup_kE_k\).  For every \(k\) such that \(\bb\in E_k\),
choose \(B_k\in\mathcal G_A(r_k,T_k)\) containing \(\bb\), and choose
\(\bq_k\in[-T_k,T_k]^n\cap\Z^n\) such that
\[
 B_k=B(\langle A\bq_k\rangle,r_k).
\]
If the vectors \(\bq_k\) took only finitely many values, one of them
would occur for infinitely many \(k\).  Since \(r_k\to0\), this would
force
\[
 \bb\in\set{\langle A\bq\rangle:\bq\in\Z^n}.
\]
Hence, outside this countable set, infinitely many distinct nonzero
vectors \(\bq_k\) occur.  For each such vector, monotonicity gives
\[
 r_k=\psi(T_k)\le\psi(\norm{\bq_k}).
\]
Consequently
\[
 (\limsup_{k\to\infty}E_k)
 \setminus\set{\langle A\bq\rangle:\bq\in\Z^n}
 \subset W_A(\psi).
\]
Together with \eqref{eq:positive-vitali-limsup}, this gives
\(\mathcal L^m( W_A(\psi))>0\).  Proposition~\ref{prop:zero-one}
upgrades the conclusion to full measure.
\end{proof}

\section{Mixed-scale intersections}\label{sec:mixed}

In this section, we estimate the measure of the mixed-scale intersections
\(E_k\cap E_\ell\) for \(k<\ell\).  For simplicity of notation, in this
section and the next we write
\[
 \mathcal G_k:=\mathcal G_A(r_k,T_k),\qquad
 M_k:=M_A(r_k,T_k),
\]
and, for \(k<\ell\),
\[
 M_{k,\ell}:=M_A(r_k,T_\ell).
\]
We begin with the local
packing estimate that counts how many selected balls at a later scale
can meet a fixed ball at an earlier scale.

\begin{lem}\label{lem:localized-vitali}
For \(k<\ell\) and \(B\in\mathcal G_k\),
\[
 \#\set{B'\in\mathcal G_\ell:B\cap B'\ne\varnothing}
 \ll\frac{N_A(r_k,T_\ell)}{N_A(r_\ell,T_\ell)}.
\]
\end{lem}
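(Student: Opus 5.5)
Fix \(B=B(\langle A\bs\rangle,r_k)\in\mathcal G_k\) with \(\bs\in[-T_k,T_k]^n\cap\Z^n\). The plan is a packing argument modeled on the upper bound in Lemma~\ref{lem:cumulative-mass}: attach to each \(B'\in\mathcal G_\ell\) meeting \(B\) a box of \(N_A(r_\ell/2,T_\ell)\) lattice points, show these boxes are disjoint and all lie in one fixed dilate of \(\mathcal B(r_k,T_\ell)\), and then compare counts.

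\emph{Step 1 (lifting the balls).} Let \(B'=B(\langle A\bs'\rangle,r_\ell)\in\mathcal G_\ell\) meet \(B\), with \(\bs'\in[-T_\ell,T_\ell]^n\). Since \(r_\ell\le r_k\) by \eqref{eq:radius-decay}, \(\langle A\bs'\rangle\) lies within \(2r_k\) of \(\langle A\bs\rangle\). Hence there is a unique \(\bp'\in\Z^m\) with
\[
 \bv_{B'}:=u_A\binom{-\bp'}{\bs'-\bs}=\binom{A(\bs'-\bs)-\bp'}{\bs'-\bs}\in\LL_A\cap\mathcal B(2r_k,2T_\ell),
\]
uniqueness holding once \(r_k<1/4\), which is true for large \(k\) (small \(k\) only affect constants).

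\emph{Step 2 (disjoint translated boxes).} Consider \(\bv_{B'}+\mathcal B(r_\ell/2,T_\ell)\). Translating by the lattice point \(\bv_{B'}\), each such box contains exactly \(N_A(r_\ell/2,T_\ell)\) points of \(\LL_A\). Its first-coordinate projection reduces modulo \(\Z^m\), after adding \(\langle A\bs\rangle\), into \(B'\). The balls \(B'\in\mathcal G_\ell\) are pairwise disjoint, so the boxes are pairwise disjoint. Each box lies in \(\mathcal B(2r_k+r_\ell/2,3T_\ell)\subset 3\mathcal B(r_k,T_\ell)\).

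\emph{Step 3 (counting).} Comparing cardinalities gives
\[
 \#\{B'\}\cdot N_A(r_\ell/2,T_\ell)\le N_A(3r_k,3T_\ell).
\]
Lemma~\ref{lem:dilation} converts both counts to \(N_A(r_\ell,T_\ell)\) and \(N_A(r_k,T_\ell)\), which yields the claim.

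The only delicate point is the disjointness in Step 2. Distinct \(B'\) must give distinct lifts, and the boxes must not overlap through different integer translates \(\bp'\). Both follow because the first-coordinate projections, taken mod \(\Z^m\), land in the disjoint balls \(B'\). This works since the box radius \(r_\ell/2\) is below the ball radius \(r_\ell\).
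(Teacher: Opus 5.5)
Your proposal is correct and follows essentially the same packing argument as the paper. You lift each \(B'\) meeting \(B\) to a lattice point of \(\LL_A\) relative to the center of \(B\), attach pairwise disjoint translated boxes containing \(N_A(r_\ell/2,\cdot)\) lattice points each, place them all inside a fixed dilate of \(\mathcal B(r_k,T_\ell)\), and conclude with Lemma~\ref{lem:dilation}. The paper uses the boxes \(\mathcal B(r_\ell/2,T_\ell/2)\) inside \(\mathcal B(5r_k/2,2T_\ell)\), which is an inessential difference; the uniqueness of \(\bp'\) you mention is also not needed, since disjointness holds for any choice of lift.
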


\begin{proof}
Choose \(\ba\in[-T_k,T_k]^n\cap\Z^n\) such that
\[
 B=B(\langle A\ba\rangle,r_k).
\]
For every \(B'\in\mathcal G_\ell\) meeting \(B\), choose
\(\bq_{B'}\in[-T_\ell,T_\ell]^n\cap\Z^n\) such that
\[
 B'=B(\langle A\bq_{B'}\rangle,r_\ell).
\]
Since \(B\cap B'\ne\varnothing\) and \(r_\ell\le r_k\), there is
\(\bp_{B'}\in\Z^m\) such that
\[
 \norm{A(\bq_{B'}-\ba)-\bp_{B'}}
 <r_k+r_\ell\le2r_k.
\]
Note that
\[
 \binom{A(\bq_{B'}-\ba)-\bp_{B'}}{\bq_{B'}-\ba}
 =u_A\binom{-\bp_{B'}}{\bq_{B'}-\ba}
 \in\LL_A.
\]
Translation by this lattice point gives
\[
 \#\left(
 \LL_A\cap\left[
 \binom{A(\bq_{B'}-\ba)-\bp_{B'}}{\bq_{B'}-\ba}
 +\mathcal B(r_\ell/2,T_\ell/2)
 \right]\right)
 =N_A(r_\ell/2,T_\ell/2).
\]
The first-coordinate projection of the translated box is
\[
 A(\bq_{B'}-\ba)-\bp_{B'}+[-r_\ell/2,r_\ell/2]^m.
\]
After translation by \(A\ba\), its image in \(\T^m\) is contained in
\(B'\).  Hence these translated boxes are pairwise disjoint as \(B'\)
ranges over the balls in \(\mathcal G_\ell\) meeting \(B\).  Moreover,
since \(r_\ell\le r_k\) and
\(\norm{\bq_{B'}-\ba}\le T_\ell+T_k\le3T_\ell/2\),
\[
 \binom{A(\bq_{B'}-\ba)-\bp_{B'}}{\bq_{B'}-\ba}
 +\mathcal B(r_\ell/2,T_\ell/2)
 \subset\mathcal B(5r_k/2,2T_\ell).
\]
Consequently,
\[
\begin{aligned}
 &\bigsqcup_{\substack{B'\in\mathcal G_\ell\\B\cap B'\ne\varnothing}}
 \left(
 \LL_A\cap
 \left[
 \binom{A(\bq_{B'}-\ba)-\bp_{B'}}{\bq_{B'}-\ba}
 +\mathcal B(r_\ell/2,T_\ell/2)
 \right]\right)\\
 &\qquad\subset
 \LL_A\cap\mathcal B(5r_k/2,2T_\ell).
\end{aligned}
\]
Taking cardinalities gives
\[
 \#\set{B'\in\mathcal G_\ell:B\cap B'\ne\varnothing}
 N_A(r_\ell/2,T_\ell/2)
 \le N_A(5r_k/2,2T_\ell).
\]
The desired bound follows from Lemma~\ref{lem:dilation}.
\end{proof}

The transition count gives the following mixed intersection estimate.

\begin{lem}\label{lem:mixed-packet}
For all sufficiently large \(k<\ell\),
\begin{equation*} \mathcal L^m(E_k\cap E_\ell)
 \ll\frac{M_kM_\ell}{M_{k,\ell}}.
\end{equation*}
\end{lem}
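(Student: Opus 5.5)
We bound \(\mathcal L^m(E_k\cap E_\ell)\) by summing over the earlier-scale balls and counting the later-scale balls that meet each one. Since the balls in \(\mathcal G_\ell\) are pairwise disjoint and each has measure \((2r_\ell)^m\), we have
\[
 \mathcal L^m(E_k\cap E_\ell)
 \le\sum_{B\in\mathcal G_k}\ \sum_{\substack{B'\in\mathcal G_\ell\\ B\cap B'\ne\varnothing}}\mathcal L^m(B')
 =(2r_\ell)^m\sum_{B\in\mathcal G_k}\#\set{B'\in\mathcal G_\ell:B\cap B'\ne\varnothing}.
\]

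Lemma~\ref{lem:localized-vitali} bounds the inner count by \(N_A(r_k,T_\ell)/N_A(r_\ell,T_\ell)\). Lemma~\ref{lem:cumulative-mass}, which applies once \(r_k<1/4\) (that is, for all large \(k\)), gives \(\#\mathcal G_k\ll T_k^n/N_A(r_k,T_k)\). Combining these,
\[
 \mathcal L^m(E_k\cap E_\ell)
 \ll r_\ell^m\,\frac{T_k^n}{N_A(r_k,T_k)}\cdot\frac{N_A(r_k,T_\ell)}{N_A(r_\ell,T_\ell)}.
\]

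It remains to identify this with \(M_kM_\ell/M_{k,\ell}\). Expanding the definitions,
\[
 \frac{M_kM_\ell}{M_{k,\ell}}
 =\frac{r_k^mT_k^n}{N_A(r_k,T_k)}\cdot\frac{r_\ell^mT_\ell^n}{N_A(r_\ell,T_\ell)}\cdot\frac{N_A(r_k,T_\ell)}{r_k^mT_\ell^n}
 =\frac{r_\ell^m\,T_k^n\,N_A(r_k,T_\ell)}{N_A(r_k,T_k)\,N_A(r_\ell,T_\ell)},
\]
because the factors \(r_k^m\) and \(T_\ell^n\) cancel. This is exactly the bound from the previous step, so the lemma follows.

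The argument has no real obstacle: the substantive input is Lemma~\ref{lem:localized-vitali}, which is already proved. The points to check are that the \(\ll\) constants are uniform, which holds because Lemmas~\ref{lem:cumulative-mass} and~\ref{lem:dilation} use fixed dilation factors, and that the hypothesis ``sufficiently large \(k\)'' secures \(r_k<1/4\). Since \(r_\ell\le r_k\) by \eqref{eq:radius-decay}, both \(k\) and \(\ell\) are then in the range where Lemma~\ref{lem:cumulative-mass} applies.
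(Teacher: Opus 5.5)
Your proposal is correct and follows the paper's proof essentially step for step. You bound the intersection by summing $\mathcal L^m(B')\le(2r_\ell)^m$ over meeting pairs $B\in\mathcal G_k$, $B'\in\mathcal G_\ell$, then control $\#\mathcal G_k$ by Lemma~\ref{lem:cumulative-mass} (valid once $r_k<1/4$) and the inner count by Lemma~\ref{lem:localized-vitali}, and finish with the same cancellation identifying the result with $M_kM_\ell/M_{k,\ell}$. One small point: disjointness of the balls in $\mathcal G_\ell$ is not actually used in your first inequality, which is just a union bound.
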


\begin{proof}
Since \(r_k\to0\), we have \(r_\ell\le r_k<1/4\) for all sufficiently
large \(k<\ell\), as required by Lemma~\ref{lem:cumulative-mass}.
By Lemmata~\ref{lem:cumulative-mass} and \ref{lem:localized-vitali}, we have
\[
\begin{aligned}
 \mathcal L^m(E_k\cap E_\ell)
 &\le\sum_{B\in\mathcal G_k}
       \sum_{\substack{B'\in\mathcal G_\ell\\B\cap B'\ne\varnothing}}
       \mathcal L^m(B\cap B')\\
 &\ll
 \frac{T_k^n}{N_A(r_k,T_k)}\,
 \frac{N_A(r_k,T_\ell)}{N_A(r_\ell,T_\ell)}\,r_\ell^m\\
 &=\frac{M_kM_\ell}{M_{k,\ell}}.
\end{aligned}
\]
\end{proof}

\section{Quasi-independence on average}\label{sec:modules}

We now estimate the double sum in
Proposition~\ref{prop:vitali-QIA}.  For \(N\ge1\), put
\[
 S_N:=\sum_{k=1}^{N}M_k,\qquad
 K_N:=\sum_{1\le k<\ell\le N}
      \frac{M_kM_\ell}{M_{k,\ell}}.
\]
By Lemma~\ref{lem:cumulative-mass}, the diagonal terms contribute
\(O(S_N)\).  For each exceptional initial index \(i\) and all
sufficiently large \(\ell\),
\[
 \mathcal L^m(E_i\cap E_\ell)
 \le\mathcal L^m(E_\ell)\ll M_\ell.
\]
Thus these intersections also contribute \(O(S_N)\); the finite initial
block is absorbed using \(S_N\ge M_1>0\).  Applying
Lemma~\ref{lem:mixed-packet} to the remaining off-diagonal terms gives
\begin{equation}\label{eq:second-moment-reduction}
 \sum_{k,\ell=1}^{N}\mathcal L^m(E_k\cap E_\ell)
 \ll S_N+K_N.
\end{equation}
Consequently, it is enough to prove
\begin{equation}\label{eq:mixed-kernel-target}
 K_N\ll S_N^2+S_N.
\end{equation}

We prove \eqref{eq:mixed-kernel-target} by separating two geometrically
different contributions.  For \(k<\ell\), let
\[
 \Gamma_{k,\ell}
 :=\LL_A\cap
 \spanR\bigl(\LL_A\cap\mathcal B(r_k,T_\ell)\bigr).
\]
This is the primitive sublattice spanned by the lattice points visible
in the mixed box.  If \(\Gamma_{k,\ell}\) has full rank, then
\(M_{k,\ell}\asymp 1\), and the corresponding pairs contribute the
quadratic term \(O(S_N^2)\).  We shall prove that all pairs with
\(\Gamma_{k,\ell}\) proper contribute only the linear term
\(O(S_N)\).

The proper-sublattice estimate has two ingredients.  First, all pairs
associated with one fixed proper sublattice can be charged to the
one-scale terms at their endpoints.  Second, nestedness of the mixed
boxes implies that each endpoint is charged by only \(O(1)\) distinct
proper sublattices.  We begin with the exterior-algebra description of
\(M_{k,\ell}\), which identifies the full-rank contribution and controls
each fixed proper sublattice.

\subsection{Exterior-product estimates}
\label{subsec:exterior-mass}

Let \(\Gamma\subset\LL_A\) be a primitive sublattice of rank \(\sigma\).
Choose a \(\Z\)-basis \(\bv_1,\ldots,\bv_\sigma\) and let
\(\omega_\Gamma=\bv_1\wedge\cdots\wedge\bv_\sigma\).
For \(g\in\operatorname{GL}_d(\R)\), write
\[
 g\omega_\Gamma:=(g\bv_1)\wedge\cdots\wedge(g\bv_\sigma).
\]
Its Euclidean norm is the covolume of \(g\Gamma\).  We use the supremum
norm in the standard exterior basis, so norm equivalence gives
\[
 \operatorname{covol}(g\Gamma)\asymp\norm{g\omega_\Gamma}.
\]

Let \(\mathbf e_1,\ldots,\mathbf e_d\) be the standard basis of
\(\R^d\).  For \(I=\{i_1<\cdots<i_\sigma\}\subset\{1,\ldots,d\}\),
write
\[
 \mathbf e_I:=\mathbf e_{i_1}\wedge\cdots\wedge\mathbf e_{i_\sigma}.
\]
Expand
\[
 \omega_\Gamma=\sum_{|I|=\sigma}c_I\mathbf e_I
\]
and define
\[
 \omega_{\Gamma,p}
 :=\sum_{\substack{|I|=\sigma\\|I\cap\{1,\ldots,m\}|=p}}
 c_I\mathbf e_I.
\]
The indexing conditions require \(p\le m\) and \(\sigma-p\le n\).
With
\[
 p_-(\sigma)=\max\{0,\sigma-n\},\quad
 p_+(\sigma)=\min\{m,\sigma\},
\]
we have
\begin{equation*} \omega_\Gamma
 =\sum_{p=p_-(\sigma)}^{p_+(\sigma)}\omega_{\Gamma,p}.
\end{equation*}
Under a change of \(\Z\)-basis by
\(U\in\operatorname{GL}_\sigma(\Z)\),
\[
 \omega_{\Gamma,p}'=(\det U)\omega_{\Gamma,p},
 \qquad \det U=\pm1.
\]
Thus \(\norm{\omega_{\Gamma,p}}\) is independent of the chosen basis.
Define
\begin{equation}\label{eq:Phi}
 \Phi_\Gamma(r,T)
 :=\max_{p_-(\sigma)\le p\le p_+(\sigma)}
 \norm{\omega_{\Gamma,p}}r^{m-p}T^{n-\sigma+p}.
\end{equation}
For \(\Gamma=\{0\}\), define
\(\Phi_\Gamma(r,T):=r^mT^n\).

For the box \(\mathcal B(r,T)\), put
\begin{equation*} \Gamma(r,T):=\LL_A\cap
 \spanR(\LL_A\cap\mathcal B(r,T)),\qquad
 \sigma(r,T):=\rank\Gamma(r,T).
\end{equation*}

The choice \(\Gamma(r,T)\) gives a two-sided estimate for \(M_A(r,T)\),
whereas an arbitrary \(\Gamma\) gives an upper bound.

\begin{lem}\label{lem:wedge-mass}
For the sublattice \(\Gamma(r,T)\),
\begin{equation}\label{eq:active-Phi}
 M_A(r,T)\asymp
 \Phi_{\Gamma(r,T)}(r,T).
\end{equation}
For every primitive rank-\(\sigma\) sublattice \(\Gamma\subset\LL_A\),
\begin{equation}\label{eq:arbitrary-Phi}
 M_A(r,T)\ll\Phi_\Gamma(r,T).
\end{equation}
When \(\sigma(r,T)=d\), both sides of \eqref{eq:active-Phi} are comparable
to one.
\end{lem}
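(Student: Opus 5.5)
Throughout, put \(R=(T^nr^m)^{1/d}\) and \(t=\frac{mn}{d}\log(T/r)\), and work with the unimodular lattice \(a_t\LL_A\) exactly as in Lemma~\ref{lem:minima-mass}. The first step is to convert \(\Phi_\Gamma\) into a covolume. Since \(a_t\) acts diagonally, it multiplies \(\mathbf e_I\) by \(e^{pt/m-(\sigma-p)t/n}\), where \(p=|I\cap\{1,\ldots,m\}|\). Using \(e^{t/m}=R/r\) and \(e^{-t/n}=R/T\), we get
\[
 \norm{a_t\omega_{\Gamma,p}}
 =\norm{\omega_{\Gamma,p}}\,R^{\sigma}r^{-p}T^{-(\sigma-p)}.
\]
Because the components \(\omega_{\Gamma,p}\) have disjoint supports in the standard exterior basis, the supremum norm gives \(\norm{a_t\omega_\Gamma}=\max_p\norm{a_t\omega_{\Gamma,p}}\). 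Multiplying by \(r^mT^n=R^d\) then yields the identity
\[
 \Phi_\Gamma(r,T)\asymp R^{d-\sigma}\operatorname{covol}(a_t\Gamma).
\]
This identity also holds for \(\Gamma=\{0\}\). So both assertions reduce to comparing \(M_A(r,T)\asymp\prod_i\min\{R,\lambda_i\}\) (Lemma~\ref{lem:minima-mass}, with \(\lambda_i=\lambda_i(a_t\LL_A)\)) against \(R^{d-\sigma}\operatorname{covol}(a_t\Gamma)\).

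For \eqref{eq:active-Phi}, take \(\Gamma=\Gamma(r,T)\). Its image \(a_t\Gamma\) is the primitive sublattice spanned by the lattice vectors of \(a_t\LL_A\) in \([-R,R]^d\), so its rank \(\sigma\) is the number of indices with \(\lambda_i\le R\). I will show that \(\lambda_1,\ldots,\lambda_\sigma\) are comparable to the successive minima of \(a_t\Gamma\) itself. Since \(a_t\Gamma\subset a_t\LL_A\), we have \(\lambda_i(a_t\Gamma)\ge\lambda_i\). Conversely, the \(\sigma\) independent vectors realizing \(\lambda_1,\ldots,\lambda_\sigma\) all lie in the box, hence in \(a_t\Gamma\), which gives the reverse inequality \(\lambda_i(a_t\Gamma)\le\lambda_i\). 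Minkowski's second theorem applied to \(a_t\Gamma\) inside \(\operatorname{span}(a_t\Gamma)\) gives \(\operatorname{covol}(a_t\Gamma)\asymp\prod_{i\le\sigma}\lambda_i\), with constants depending only on \(d\). Combining, \(\Phi_\Gamma\asymp\prod_{i\le\sigma}\lambda_i\,R^{d-\sigma}=\prod_i\min\{R,\lambda_i\}\asymp M_A(r,T)\). If \(\sigma=d\), then \(\Gamma=\LL_A\), \(\operatorname{covol}(a_t\LL_A)=1\), and both sides are \(\asymp1\).

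For \eqref{eq:arbitrary-Phi}, I need \(\prod_i\min\{R,\lambda_i\}\ll R^{d-\sigma}\operatorname{covol}(a_t\Gamma)\) for an arbitrary primitive rank-\(\sigma\) sublattice \(\Gamma\). By Minkowski, \(\operatorname{covol}(a_t\Gamma)\asymp\prod_{i\le\sigma}\lambda_i(a_t\Gamma)\), and \(\lambda_i(a_t\Gamma)\ge\lambda_i\) since \(a_t\Gamma\subset a_t\LL_A\). So it suffices to prove
\[
 \prod_{i=1}^d\min\{R,\lambda_i\}\le R^{d-\sigma}\prod_{i=1}^{\sigma}\lambda_i .
\]
This holds because \(\min\{R,\lambda_i\}\le\lambda_i\) for \(i\le\sigma\) and \(\min\{R,\lambda_i\}\le R\) for \(i>\sigma\). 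The only delicate points are the disjoint-support claim, which makes the sup norm split cleanly across the components \(\omega_{\Gamma,p}\), and the application of Minkowski's second theorem inside the subspace \(\operatorname{span}(a_t\Gamma)\) using the restricted sup-norm ball, which is a centrally symmetric convex body with volume bounded above and below in terms of \(d\). I expect neither to be a serious obstacle.
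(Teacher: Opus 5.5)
Your proposal is correct and follows the paper's proof essentially step for step: the diagonal action of $a_t$ on the components $\omega_{\Gamma,p}$ gives $\Phi_\Gamma(r,T)\asymp R^{d-\sigma}\operatorname{covol}(a_t\Gamma)$, Minkowski's second theorem in $\spanR(a_t\Gamma)$ replaces the covolume by the product of the successive minima of $a_t\Gamma$, and comparing these with $\lambda_i(a_t\LL_A)$ (inequality in general, equality for $i\le\sigma$ when $\Gamma=\Gamma(r,T)$), together with Lemma~\ref{lem:minima-mass}, gives both estimates. The only blemish is wording: in the first step you multiply by $R^{d-\sigma}$ and then substitute $R^d=r^mT^n$, rather than multiplying by $r^mT^n$, but the identity you obtain is the right one.
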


\begin{proof}
As in Lemma~\ref{lem:minima-mass}, the choices
\(R=(T^nr^m)^{1/d}\) and \(t=\frac{mn}{d}\log(T/r)\) give
\[
 r e^{t/m}=T e^{-t/n}=R,
 \qquad a_t\mathcal B(r,T)=R[-1,1]^d.
\]
We first relate \(\Phi_\Gamma(r,T)\) to the covolume of
\(a_t\Gamma\).  The map \(a_t\) scales the first \(m\) standard basis
vectors by \(R/r\) and the remaining \(n\) by \(R/T\).  Hence
\[
 a_t\omega_{\Gamma,p}
 =\left(\frac Rr\right)^p
  \left(\frac RT\right)^{\sigma-p}\omega_{\Gamma,p}.
\]
Taking the supremum norm componentwise and multiplying by
\(R^{d-\sigma}\), we obtain
\begin{equation}\label{eq:flow-wedge}
 R^{d-\sigma}\norm{a_t\omega_\Gamma}
 \asymp
 \max_p\norm{\omega_{\Gamma,p}}r^{m-p}T^{n-\sigma+p}
 =\Phi_\Gamma(r,T).
\end{equation}

Let \(\lambda_1\le\cdots\le\lambda_d\) be the successive minima of
\(a_t\LL_A\), and let \(\mu_1\le\cdots\le\mu_\sigma\) be those of
\(a_t\Gamma\), all with respect to the supremum norm.
In \(\spanR(a_t\Gamma)\), the section of \([-1,1]^d\) lies between
the Euclidean balls of radii \(1\) and \(\sqrt d\).
Minkowski's second theorem and \eqref{eq:flow-wedge} therefore give
\[
 \Phi_\Gamma(r,T)
 \asymp R^{d-\sigma}\operatorname{covol}(a_t\Gamma)
 \asymp R^{d-\sigma}\prod_{i=1}^{\sigma}\mu_i.
\]
For \(\sigma=0\), the same formula holds with the empty product and
the covolume both equal to one.

We now prove \eqref{eq:arbitrary-Phi}.  The inclusion
\(a_t\Gamma\subset a_t\LL_A\) gives \(\lambda_i\le\mu_i\) for
\(1\le i\le\sigma\).  Hence Lemma~\ref{lem:minima-mass} yields
\begin{align*}
 M_A(r,T)
 &\asymp\prod_{i=1}^{d}\min\{R,\lambda_i\}
 \le R^{d-\sigma}\prod_{i=1}^{\sigma}\lambda_i\\
 &\le R^{d-\sigma}\prod_{i=1}^{\sigma}\mu_i
 \asymp\Phi_\Gamma(r,T).
\end{align*}

For \eqref{eq:active-Phi}, take \(\Gamma=\Gamma(r,T)\) and
\(\sigma=\sigma(r,T)\).  Since \(a_t\mathcal B(r,T)=R[-1,1]^d\),
\[
 a_t\Gamma
 =a_t\LL_A\cap\spanR\bigl(a_t\LL_A\cap R[-1,1]^d\bigr).
\]
Thus \(\sigma\) is the number of successive minima not exceeding \(R\):
\begin{equation*} \sigma=\#\{i\in\{1,\ldots,d\}:\lambda_i\le R\},
 \qquad \lambda_\sigma\le R<\lambda_{\sigma+1},
\end{equation*}
where \(\lambda_0=0\) and \(\lambda_{d+1}=\infty\).
Vectors realizing \(\lambda_1,\ldots,\lambda_\sigma\) lie in
\(a_t\LL_A\cap R[-1,1]^d\), and therefore belong to \(a_t\Gamma\).
This gives the reverse inequalities \(\mu_i\le\lambda_i\), so
\(\mu_i=\lambda_i\) for \(1\le i\le\sigma\).
Using these equalities in Lemma~\ref{lem:minima-mass}, we obtain
\[
 M_A(r,T)
 \asymp R^{d-\sigma}\prod_{i=1}^{\sigma}\lambda_i
 =R^{d-\sigma}\prod_{i=1}^{\sigma}\mu_i
 \asymp\Phi_\Gamma(r,T),
\]
which is \eqref{eq:active-Phi}.

Finally, if \(\sigma(r,T)=d\), then \(\Gamma(r,T)=\LL_A\).
Both \(a_t\) and \(\LL_A\) are unimodular, so
\(\operatorname{covol}(a_t\Gamma(r,T))=1\), proving the last assertion.
\end{proof}

\subsection{A fixed-sublattice estimate}
\label{subsec:fixed-sublattice}

We next bound the contribution from a fixed proper sublattice by
the sum over the indices appearing in its pairs.
For a finite set \(\mathcal R\) of pairs \((k,\ell)\) with \(k<\ell\),
define
\[
 \partial\mathcal R:=\bigcup_{(k,\ell)\in\mathcal R}\{k,\ell\}.
\]

\begin{lem}\label{lem:fixed-module}
Fix a primitive rank-\(\sigma\) sublattice \(\Gamma\subset\LL_A\), where
\(0\le \sigma<d\).  For every such \(\mathcal R\),
\begin{equation*} \sum_{(k,\ell)\in\mathcal R}
 \frac{M_kM_\ell}{\Phi_\Gamma(r_k,T_\ell)}
 \ll\sum_{j\in\partial\mathcal R}M_j.
\end{equation*}
The implied constant is uniform in \(\Gamma\) and \(\mathcal R\).
\end{lem}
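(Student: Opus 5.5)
The approach is to compare, monomial by monomial, the mixed quantity \(\Phi_\Gamma(r_k,T_\ell)\) with the one-scale quantities \(\Phi_\Gamma(r_k,T_k)\) and \(\Phi_\Gamma(r_\ell,T_\ell)\). These bound \(M_k\) and \(M_\ell\) from above by \eqref{eq:arbitrary-Phi} of Lemma~\ref{lem:wedge-mass}. The goal is a pairwise estimate of the form
\[
 \frac{M_kM_\ell}{\Phi_\Gamma(r_k,T_\ell)}
 \ll \theta^{\ell-k}\,(M_k+M_\ell),
 \qquad \theta:=\max\{\varrho,1/2\}<1 .
\]
Granting this, each \(j\in\partial\mathcal R\) is charged at most \(\sum_{i\ge1}2\theta^{i}\ll1\) times, which gives the lemma with a constant uniform in \(\Gamma\) and \(\mathcal R\). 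Write \(\tau_p(r,T):=\norm{\omega_{\Gamma,p}}r^{m-p}T^{n-\sigma+p}\), so that \(\Phi_\Gamma=\max_p\tau_p\). In the case \(\sigma=0\) the statement is immediate, since then \(\Phi_\Gamma(r_k,T_\ell)=r_k^mT_\ell^n\), \(M_\ell\ll r_\ell^mT_\ell^n\), and \eqref{eq:radius-decay} gives the factor \(\varrho^{m(\ell-k)}\).

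For \(\sigma\ge1\), the first step is to bound \(M_\ell\). Let \(p\) attain the maximum in \(\Phi_\Gamma(r_\ell,T_\ell)\). Using \(\Phi_\Gamma(r_k,T_\ell)\ge\tau_p(r_k,T_\ell)\) and \eqref{eq:radius-decay},
\[
 \frac{M_\ell}{\Phi_\Gamma(r_k,T_\ell)}
 \ll\frac{\tau_p(r_\ell,T_\ell)}{\tau_p(r_k,T_\ell)}
 =\Bigl(\frac{r_\ell}{r_k}\Bigr)^{m-p}\le\varrho^{(m-p)(\ell-k)} .
\]
If \(p<m\), this ratio decays geometrically, and multiplying by \(M_k\) gives the desired bound \(\varrho^{\ell-k}M_k\).

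The second step treats the case \(p=m\). Here one bounds \(M_k\) instead. Let \(p'\) attain the maximum in \(\Phi_\Gamma(r_k,T_k)\); then
\[
 \frac{M_k}{\Phi_\Gamma(r_k,T_\ell)}
 \ll\Bigl(\frac{T_k}{T_\ell}\Bigr)^{n-\sigma+p'}
 =2^{-(n-\sigma+p')(\ell-k)} .
\]
This gives \(2^{-(\ell-k)}M_\ell\) whenever \(n-\sigma+p'\ge1\). Since \(d-\sigma\ge1\), every monomial has \(m-p\ge1\) or \(n-\sigma+p\ge1\). So the only remaining configuration is the degenerate one, \(p=m\) at scale \(\ell\) and \(p'=\sigma-n\) at scale \(k\). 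This forces \(p\ne p'\) (since \(\sigma<d\)), and \(\sigma\ge\max\{m,n\}\).

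I expect this degenerate configuration to be the main obstacle: neither single-monomial comparison yields decay in \(\ell-k\). Both \(\tau_m\) and \(\tau_{\sigma-n}\) are then simultaneously available as lower bounds for \(\Phi_\Gamma(r_k,T_\ell)\). My first attempt would interpolate between them, \(\Phi_\Gamma(r_k,T_\ell)\ge\tau_m(r_k,T_\ell)^{1-\alpha}\tau_{\sigma-n}(r_k,T_\ell)^{\alpha}\), with \(\alpha\) tuned so that the resulting exponents of \(r_\ell/r_k\) and \(T_k/T_\ell\) are both positive. I would combine this with the upper bounds \(M_k\le\tau_{\sigma-n}(r_k,T_k)\) and \(M_\ell\le\tau_m(r_\ell,T_\ell)\), together with the trivial bound \(M_j\ll1\) (from \eqref{eq:minima-mass}, since \(\prod_i\lambda_i\asymp1\)). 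The aim is to produce \(\min\{M_k,M_\ell\}^{1+\delta}\)-type savings, or a genuine factor \(\theta^{\ell-k}\).

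If that interpolation does not produce decay, the fallback is to use the specific structure of \(\omega_\Gamma\) coming from \(\LL_A=u_A\Z^d\). Its top-degree component \(\omega_{\Gamma,\sigma-n}\) is constrained by the \(\Z^n\)-part of \(\Gamma\), which should rule out both extreme monomials dominating at different scales without an intermediate monomial \(\tau_{p''}\), \(\sigma-n<p''<m\). Such an intermediate monomial restores both exponents \(m-p''\ge1\) and \(n-\sigma+p''\ge1\), and with it geometric decay on either side.
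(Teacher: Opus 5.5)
Your \(\sigma=0\) case and your first two steps are correct, and they match the paper's first two arms. A maximizing index \(p<m\) at scale \(\ell\) gives \(\varrho^{\ell-k}M_k\), and \(p'>\sigma-n\) at scale \(k\) gives \(2^{-(\ell-k)}M_\ell\).

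\textbf{The gap.} You leave the degenerate configuration \(p'=\sigma-n\), \(p=m\) open. In that configuration your pairwise target \(\ll\theta^{\ell-k}(M_k+M_\ell)\) is false, so neither of your proposed routes can close it.

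\emph{Why interpolation gives nothing.} The monomial \(\tau_{\sigma-n}(r,T)=\norm{\omega_{\Gamma,\sigma-n}}r^{d-\sigma}\) does not depend on \(T\), and \(\tau_m(r,T)=\norm{\omega_{\Gamma,m}}T^{d-\sigma}\) does not depend on \(r\). Hence \(\tau_{\sigma-n}(r_k,T_\ell)=\Phi_\Gamma(r_k,T_k)\) and \(\tau_m(r_k,T_\ell)=\Phi_\Gamma(r_\ell,T_\ell)\) exactly. Any interpolation \(\tau_m^{1-\alpha}\tau_{\sigma-n}^{\alpha}\) therefore contains no power of \(r_\ell/r_k\) or \(T_k/T_\ell\): the exponents you want to tune are identically zero.

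\emph{Why the arithmetic fallback fails.} For \(m=n=1\) and \(\sigma=1\), the only indices are \(p\in\{0,1\}=\{\sigma-n,m\}\), so no intermediate monomial exists.

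\emph{A counterexample to the pairwise bound.} Take \(m=n=1\), \(\psi(q)=1/q\), and an irrational \(\alpha\) with \(q_{s+1}/q_s\) huge. Let \(\Gamma=\Z(q_s\alpha-p_s,q_s)\), choose \(c\) with \(4^c\asymp q_sq_{s+1}\), and consider the pairs \((k,\ell)=(c-i,c+i)\) with \(i\ge2\) and \(2^i\) at most a small multiple of \(\sqrt{q_{s+1}/q_s}\). Then the maximizing indices are \(p'=0\) at scale \(k\) and \(p=1\) at scale \(\ell\). Since \(\Gamma\) realizes the first minimum at both scales,
\(M_k\asymp M_\ell\asymp\Phi_\Gamma(r_k,T_\ell)\asymp 2^i\sqrt{q_s/q_{s+1}}\).
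So the summand is \(\asymp M_k+M_\ell\), while \(\ell-k=2i\) can be made arbitrarily large.

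\textbf{The missing idea.} For these pairs, give up decay in \(\ell-k\) and use instead the geometric monotonicity of the endpoint quantities themselves. Write \(\Phi_j=\Phi_\Gamma(r_j,T_j)\).
\begin{enumerate}[label=\textup{(\roman*)}]
\item The identities above give \(\Phi_\Gamma(r_k,T_\ell)\ge\max\{\Phi_k,\Phi_\ell\}\).
\item \(\Phi_k=\norm{\omega_{\Gamma,\sigma-n}}r_k^{d-\sigma}\) decreases along left endpoints with ratio at most \(\varrho^{d-\sigma}\).
\item \(\Phi_\ell=\norm{\omega_{\Gamma,m}}2^{(d-\sigma)\ell}\) increases along right endpoints with ratio \(2^{d-\sigma}\).
\end{enumerate}
Split the degenerate pairs according to whether \(\Phi_\ell\le\Phi_k\) or \(\Phi_k<\Phi_\ell\).

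In the first case, fix \(k\). Using \(M_\ell\ll\Phi_\ell\), each summand is \(\ll M_k\Phi_\ell/\Phi_k\). A geometric sequence \(\Phi_\ell\) capped by \(\Phi_k\) sums to \(\ll\Phi_k\), so the total is \(\ll M_k\).

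In the second case, fix \(\ell\) and argue symmetrically with the geometrically decreasing \(\Phi_k<\Phi_\ell\); the total is \(\ll M_\ell\).

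This charges each endpoint \(O(1)\), uniformly in \(\Gamma\), without any pairwise decay. It is how the paper finishes the lemma.
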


\begin{proof}
For each \(j\), write \(\Phi_j=\Phi_\Gamma(r_j,T_j)\) and choose an
index \(p_j\) attaining the maximum in \eqref{eq:Phi} at
\((r_j,T_j)\), so that
\[
 \Phi_j=\norm{\omega_{\Gamma,p_j}}r_j^{m-p_j}T_j^{n-\sigma+p_j}.
\]
Write
\[
 a_j=n-\sigma+p_j,\qquad b_j=m-p_j.
\]
Then \(a_j,b_j\in\Z_{\ge0}\) and \(a_j+b_j=d-\sigma>0\), while
Lemma~\ref{lem:wedge-mass} gives \(M_j\ll\Phi_j\).

For \((k,\ell)\in\mathcal R\), let \(h=\ell-k\).
Evaluating the maximum at \(p_k\) and \(p_\ell\), respectively, gives
\begin{equation}\label{eq:Phi-lower-bounds}
\begin{aligned}
 \Phi_\Gamma(r_k,T_\ell)
 &\ge \norm{\omega_{\Gamma,p_k}}r_k^{b_k}T_\ell^{a_k}
 =\Phi_k2^{a_kh},\\
 \Phi_\Gamma(r_k,T_\ell)
 &\ge\Phi_\ell\left(\frac{r_k}{r_\ell}\right)^{b_\ell}
 \ge\Phi_\ell\varrho^{-b_\ell h},
\end{aligned}
\end{equation}
where the last inequality uses \eqref{eq:radius-decay}.
Consequently,
\begin{equation*} \frac{M_kM_\ell}{\Phi_\Gamma(r_k,T_\ell)}
 \ll\min\{M_\ell2^{-a_kh},\,
             M_k\varrho^{b_\ell h}\}.
\end{equation*}

For fixed \(\ell\), sum the first bound over \(k\) with \(a_k>0\).
Since then \(a_k\ge1\),
\begin{equation}\label{eq:first-geometric-arm}
 \sum_{\substack{k:(k,\ell)\in\mathcal R\\a_k>0}}
 M_\ell2^{-a_k(\ell-k)}
 \le M_\ell\sum_{h\ge1}2^{-h}
 \ll M_\ell.
\end{equation}
Among the remaining pairs, fix \(k\) and sum the second bound over
\(\ell\) with \(b_\ell>0\):
\begin{equation}\label{eq:second-geometric-arm}
 \sum_{\substack{\ell:(k,\ell)\in\mathcal R\\a_k=0,\ b_\ell>0}}
 M_k\varrho^{b_\ell(\ell-k)}
 \le M_k\sum_{h\ge1}\varrho^h
 \ll M_k.
\end{equation}
It remains to consider
\[
 \mathcal R_0:=\set{(k,\ell)\in\mathcal R:a_k=b_\ell=0}
\]
when this set is nonempty.  For \((k,\ell)\in\mathcal R_0\), the
equalities \(a_k=b_\ell=0\) give \(p_k=\sigma-n\) and \(p_\ell=m\).
Since \(p_-(\sigma)\le p_j\le p_+(\sigma)\),
\[
 0\le p_-(\sigma)\le\sigma-n,\qquad
 m\le p_+(\sigma)\le\sigma.
\]
Thus \(\sigma\ge\max\{m,n\}\), and in fact
\(p_k=p_-(\sigma)\) and \(p_\ell=p_+(\sigma)\).
As \(\omega_\Gamma\ne0\), the maximum in \eqref{eq:Phi} is positive.
Hence the maximizing components
\(\omega_{\Gamma,\sigma-n}\) and \(\omega_{\Gamma,m}\) are nonzero.
Consequently,
\[
 \Phi_k=\norm{\omega_{\Gamma,\sigma-n}}r_k^{d-\sigma},\qquad
 \Phi_\ell=\norm{\omega_{\Gamma,m}}T_\ell^{d-\sigma}.
\]
Since \(d-\sigma>0\), \eqref{eq:radius-decay} gives geometric decay of
\(\Phi_k\) along the left endpoints of \(\mathcal R_0\), with ratio at
most \(\varrho^{d-\sigma}\), while \(T_\ell=2^\ell\) gives geometric
growth of \(\Phi_\ell\) along its right endpoints, with ratio at least
\(2^{d-\sigma}\).
With \(a_k=b_\ell=0\), \eqref{eq:Phi-lower-bounds} gives
\[
 \Phi_\Gamma(r_k,T_\ell)\ge\max\{\Phi_k,\Phi_\ell\}.
\]
First fix \(k\) and consider the pairs with \(\Phi_\ell\le\Phi_k\).
Using \(\Phi_\Gamma(r_k,T_\ell)\ge\Phi_k\) and then
\(M_\ell\ll\Phi_\ell\), we obtain
\begin{equation}\label{eq:first-residual-arm}
 \sum_{\substack{\ell:(k,\ell)\in\mathcal R_0\\\Phi_\ell\le\Phi_k}}
 \frac{M_kM_\ell}{\Phi_\Gamma(r_k,T_\ell)}
 \ll \frac{M_k}{\Phi_k}
       \sum_{\substack{\ell:(k,\ell)\in\mathcal R_0\\\Phi_\ell\le\Phi_k}}
       \Phi_\ell
 \ll M_k.
\end{equation}
The last bound follows by summing in decreasing \(\ell\)-order:
the values \(\Phi_\ell\) grow geometrically and do not exceed \(\Phi_k\).

For pairs with \(\Phi_k<\Phi_\ell\), fix \(\ell\).
Using \(\Phi_\Gamma(r_k,T_\ell)\ge\Phi_\ell\) and then
\(M_k\ll\Phi_k\) gives
\begin{equation}\label{eq:second-residual-arm}
 \sum_{\substack{k:(k,\ell)\in\mathcal R_0\\\Phi_k<\Phi_\ell}}
 \frac{M_kM_\ell}{\Phi_\Gamma(r_k,T_\ell)}
 \ll \frac{M_\ell}{\Phi_\ell}
       \sum_{\substack{k:(k,\ell)\in\mathcal R_0\\\Phi_k<\Phi_\ell}}
       \Phi_k
 \ll M_\ell.
\end{equation}
For the last bound, we sum in increasing \(k\)-order, as the values
\(\Phi_k\) decrease geometrically and are less than \(\Phi_\ell\).

These four cases partition \(\mathcal R\).  Summing
\eqref{eq:first-geometric-arm}, \eqref{eq:second-geometric-arm},
\eqref{eq:first-residual-arm}, and \eqref{eq:second-residual-arm}
over their respective fixed indices in \(\partial\mathcal R\) gives
\[
 \sum_{(k,\ell)\in\mathcal R}
 \frac{M_kM_\ell}{\Phi_\Gamma(r_k,T_\ell)}
 \ll\sum_{j\in\partial\mathcal R}M_j.\qedhere
\]
\end{proof}

\subsection{Proof of Proposition~\ref{prop:vitali-QIA}}
\label{subsec:proof-vitali-QIA}

The remaining point is that each index occurs for only boundedly many
proper sublattices.  Fix \(N\ge1\).  For each proper primitive
sublattice \(\Gamma\subsetneq\LL_A\), define
\[
 \mathcal R_\Gamma
 :=\set{(k,\ell):1\le k<\ell\le N,\
                         \Gamma_{k,\ell}=\Gamma}.
\]

\begin{lem}\label{lem:incidence}
For every \(1\le j\le N\),
\[
 \#\set{\Gamma:j\in\partial\mathcal R_\Gamma}\le2d.
\]
\end{lem}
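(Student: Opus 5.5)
The plan is to use nestedness of the mixed boxes. Fix \(j\). An index \(j\) lies in \(\partial\mathcal R_\Gamma\) exactly when \(\Gamma=\Gamma_{j,\ell}\) for some \(\ell\in(j,N]\), or \(\Gamma=\Gamma_{k,j}\) for some \(k\in[1,j)\). It therefore suffices to show two things: the family \(\{\Gamma_{j,\ell}:j<\ell\le N\}\) contains at most \(d\) distinct sublattices, and so does the family \(\{\Gamma_{k,j}:1\le k<j\}\). Together these give the bound \(2d\).

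For the first family, fix \(k=j\) and let \(\ell\) increase. Since \(T_\ell=2^\ell\) increases, the boxes \(\mathcal B(r_j,T_\ell)\) increase with \(\ell\). Hence the sets \(\LL_A\cap\mathcal B(r_j,T_\ell)\) increase, and so do their real spans. Therefore the primitive sublattices
\[
\Gamma_{j,\ell}=\LL_A\cap\spanR\bigl(\LL_A\cap\mathcal B(r_j,T_\ell)\bigr)
\]
form a chain, nondecreasing in \(\ell\). Two members of this chain are equal if and only if they have equal rank, since both are primitive: \(\Gamma\subset\Gamma'\) with \(\Gamma\), \(\Gamma'\) primitive of the same rank forces \(\Gamma=\Gamma'\), because \(\Gamma=\LL_A\cap\spanR\Gamma=\LL_A\cap\spanR\Gamma'=\Gamma'\). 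The rank lies in \(\{0,\ldots,d\}\), and only proper sublattices are counted, so the ranks range over \(\{0,\ldots,d-1\}\). This gives at most \(d\) distinct proper sublattices. (Rank \(0\) gives the zero lattice, which is a legitimate proper primitive sublattice; \(0\) is always in the box, so the span is at least \(\{0\}\).)

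For the second family, fix \(\ell=j\) and let \(k\) range over \(k<j\). By the monotonicity of \(\psi\), \(r_k=\psi(2^k)\) is non-increasing in \(k\). So as \(k\) decreases, \(r_k\) increases, the boxes \(\mathcal B(r_k,T_j)\) increase, and the sublattices \(\Gamma_{k,j}\) form a chain, nonincreasing in \(k\). The same rank argument bounds the number of distinct proper members by \(d\).

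I do not expect a real obstacle here. The only points needing care are the following.
\begin{enumerate}
\item Checking that \(\Gamma_{k,\ell}\) depends on the box only through the span, so that inclusion of boxes gives inclusion of these primitive sublattices.
\item Noting that a strictly increasing chain of primitive sublattices has strictly increasing rank.
\item Confirming that the full-rank member \(\LL_A\), which may appear in either chain, is excluded from the count, since \(\mathcal R_\Gamma\) is defined only for proper \(\Gamma\).
\end{enumerate}
The possibly sharper bound \(d+d\) versus counting a shared member twice does not matter: \(2d\) is an upper bound either way.
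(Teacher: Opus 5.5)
Your proposal is correct and follows essentially the same route as the paper: split into the chain $\Gamma_{j,\ell}$ (increasing in $\ell$) and the chain $\Gamma_{k,j}$ (increasing as $k$ decreases), use nestedness of the boxes, and use the fact that nested primitive sublattices of equal rank coincide, so each chain contains at most $d$ distinct proper members. Your explicit check that equal rank forces equality via $\Gamma=\LL_A\cap\spanR\Gamma$ is the detail the paper compresses into ``by the definition of $\Gamma_{k,\ell}$.''
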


\begin{proof}
If \(j\in\partial\mathcal R_\Gamma\), then
\(\Gamma=\Gamma_{j,\ell}\) for some \(\ell>j\), or
\(\Gamma=\Gamma_{k,j}\) for some \(k<j\).
For the first family, we have
\[
 \mathcal B(r_j,T_{\ell_1})\subset \mathcal B(r_j,T_{\ell_2}),
 \qquad
 \Gamma_{j,\ell_1}\subset\Gamma_{j,\ell_2}
 \qquad(j<\ell_1<\ell_2).
\]
For the second, monotonicity of \(r_k\) gives
\[
 \mathcal B(r_{k_2},T_j)\subset \mathcal B(r_{k_1},T_j),
 \qquad
 \Gamma_{k_2,j}\subset\Gamma_{k_1,j}
 \qquad(k_1<k_2<j).
\]
In either chain, equal ranks force equal spans and hence equal
sublattices, by the definition of \(\Gamma_{k,\ell}\).
The proper sublattices have ranks in \(\{0,\ldots,d-1\}\), so at most
\(d\) distinct ones occur in each chain, giving at most \(2d\) in total.
\end{proof}

\begin{proof}[Proof of Proposition~\ref{prop:vitali-QIA}]
We prove \eqref{eq:mixed-kernel-target}, which implies the proposition
by \eqref{eq:second-moment-reduction}.

For a full-rank pair, \(\Gamma_{k,\ell}=\LL_A\), and
Lemma~\ref{lem:wedge-mass} gives \(M_{k,\ell}\asymp 1\).  Hence all
full-rank pairs together contribute
\[
 \sum_{\substack{1\le k<\ell\le N\\
                  \Gamma_{k,\ell}=\LL_A}}
 \frac{M_kM_\ell}{M_{k,\ell}}
 \ll\sum_{1\le k<\ell\le N}M_kM_\ell
 \le S_N^2.
\]

For each \(\Gamma\) with \(\mathcal R_\Gamma\ne\varnothing\),
Lemmata~\ref{lem:wedge-mass} and
\ref{lem:fixed-module} give
\begin{align*}
 \sum_{(k,\ell)\in\mathcal R_\Gamma}
 \frac{M_kM_\ell}{M_{k,\ell}}
 &\asymp\sum_{(k,\ell)\in\mathcal R_\Gamma}
 \frac{M_kM_\ell}{\Phi_\Gamma(r_k,T_\ell)}\\*
 &\ll\sum_{j\in\partial\mathcal R_\Gamma}M_j.
\end{align*}
Summing over these \(\Gamma\) and applying Lemma~\ref{lem:incidence} gives
\begin{align*}
 \sum_{\substack{1\le k<\ell\le N\\\Gamma_{k,\ell}\ne\LL_A}}
 \frac{M_kM_\ell}{M_{k,\ell}}
 &\ll\sum_\Gamma\sum_{j\in\partial\mathcal R_\Gamma}M_j\\*
 &=\sum_{j=1}^N M_j\,\#\set{\Gamma:j\in\partial\mathcal R_\Gamma}
 \le2dS_N.
\end{align*}
Together with the full-rank estimate, this proves
\eqref{eq:mixed-kernel-target}.
\end{proof}

\section{Concluding remarks and open problems}\label{sec:conclusion}

We close with three directions suggested by the proof: criteria beyond
dyadic regularity, exact Hausdorff dimension, and a weighted extension.

\subsection{The role of dyadic regularity}

The convergence argument in Section~\ref{subsec:convergence-case}
requires only that \(\psi\) be non-increasing and tend to zero.
For the divergence argument, the essential use of \Dcond{} occurs in
Lemma~\ref{lem:fixed-module}: the decay estimate
\eqref{eq:radius-decay} supplies the geometric tails in
\eqref{eq:second-geometric-arm} and \eqref{eq:second-residual-arm}.
These estimates control the contribution of each proper primitive
sublattice and yield the quasi-independence bound in
Proposition~\ref{prop:vitali-QIA}.  The same geometric summability is
used in Proposition~\ref{prop:scalar} to sum the dyadic scales within
a continued-fraction block.

Without \Dcond{}, the values \(\psi(2^k)\) may remain constant over
arbitrarily many consecutive scales, so these geometric tail estimates
are unavailable in general.

\begin{question}
For arbitrary \(A\) and positive non-increasing \(\psi\to0\), can one
obtain an explicit convergence--divergence criterion for
\(\mathcal L^m( W_A(\psi))\) from lattice counts over suitable
denominator blocks, generalizing the scalar Fuchs--Kim criterion?
\end{question}

\subsection{Hausdorff measure and dimension}
\label{subsec:hausdorff-gap}

For \(0<s<m\), the convergence and divergence conditions in
Corollary~\ref{cor:hausdorff} involve the two series
\[
 \sum_k\frac{2^{kn}\psi(2^k)^s}{N_A(\psi(2^k),2^k)}
 \qquad\text{and}\qquad
 \sum_k\frac{2^{kn}\psi(2^k)^s}{N_A(\psi(2^k)^{s/m},2^k)},
\]
respectively.  The first comes from covering at the original radius,
while the second comes from mass transference at an enlarged radius.
Their convergence and divergence conditions need not be complementary,
so the corollary does not give a zero--infinity law governed by a single
series.

In the one-dimensional case, Kim--Rams--Wang \cite[Theorem~1.1]{KRW18}
determine the exact Hausdorff dimension for every irrational rotation
and every positive non-increasing \(\psi\to0\), without \Dcond{}.
Their formula uses a series obtained by optimizing the covering cost
within each continued-fraction block.  This provides a model for an
exact dimension formula in higher dimensions.

\begin{question}
Suppose that \(A\Z^n\) is dense modulo \(\Z^m\).
Can one determine \(\dim_H W_A(\psi)\) for positive non-increasing
\(\psi\) satisfying \Dcond{} by optimizing covering radii and denominator
blocks?  In particular, what is the dimension for
\(\psi_v(q)=q^{-v}\), \(v>0\), when \(\sum_k\Delta_k(A)<\infty\)?
\end{question}

\subsection{The weighted setting}\label{sec:weighted}

Let
\[
 \mathbf r=(r_1,\ldots,r_m),\qquad
 \mathbf s=(s_1,\ldots,s_n)
\]
have positive coordinates normalized by
\[
 \sum_{i=1}^m r_i=m,\qquad \sum_{j=1}^n s_j=n,
\]
and define
\begin{align*}
 \norm{\bx}_{\mathbf r}
 &:=\max_{1\le i\le m}|x_i|^{1/r_i},
 &
 \norm{\bq}_{\mathbf s}
 &:=\max_{1\le j\le n}|q_j|^{1/s_j},\\
 \norm{\bx}_{\mathbf r,\Z^m}
 &:=\min_{\bp\in\Z^m}\norm{\bx-\bp}_{\mathbf r}.
\end{align*}
The weighted limsup set is
\[
  W_A^{\mathbf r,\mathbf s}(\psi)
 :=
 \left\{\bb\in\T^m:
 \begin{array}{l}
 \norm{A\bq-\bb}_{\mathbf r,\Z^m}
 <\psi(\norm{\bq}_{\mathbf s})\\
 \text{for infinitely many }\mathbf0\ne\bq\in\Z^n
 \end{array}
 \right\}.
\]
The corresponding clustering count and normalized mass are
\begin{align*}
 N_A^{\mathbf r,\mathbf s}(u,T)
 &:=
 \#\left\{(\bp,\bq)\in\Z^m\times\Z^n:
 \begin{array}{ll}
 |(A\bq-\bp)_i|\le u^{r_i}&(1\le i\le m),\\
 |q_j|\le T^{s_j}&(1\le j\le n)
 \end{array}\right\},\\
 M_A^{\mathbf r,\mathbf s}(u,T)
 &:=
 \frac{u^mT^n}{N_A^{\mathbf r,\mathbf s}(u,T)}.
\end{align*}
The normalization of the weights makes \(u^mT^n\) the volume scale and
recovers the present definitions when all weights equal \(1\).

Hussain and Ward proved a sufficient full-measure theorem in the weighted
nonsingular case \cite[Theorem~1]{HW24}.  We conjecture the following
weighted analogue of Theorem~\ref{thm:main} for an arbitrary fixed matrix.

\begin{conjecture}
Let \(A\in\Mat_{m,n}\), and let \(\psi:\R_+\to\R_+\) be
non-increasing and satisfy \Dcond{}.  For every pair of positive
weight vectors normalized as above,
\begin{equation}\label{eq:weighted-candidate-law}
 \mathcal L^m\bigl( W_A^{\mathbf r,\mathbf s}(\psi)\bigr)
 =
 \begin{cases}
 0&\text{if }\displaystyle
 \sum_k M_A^{\mathbf r,\mathbf s}(\psi(2^k),2^k)<\infty,\\[5pt]
 1&\text{if }\displaystyle
 \sum_k M_A^{\mathbf r,\mathbf s}(\psi(2^k),2^k)=\infty.
 \end{cases}
\end{equation}
\end{conjecture}

We expect that \eqref{eq:weighted-candidate-law} can be proved by
adapting the argument of Part~II: using rectangles with the prescribed
side lengths in the covering estimates and retaining the individual
coordinate weights in the exterior-product estimates and geometric sums.

\providecommand{\bysame}{\leavevmode\hbox to3em{\hrulefill}\thinspace}
\providecommand{\MR}{\relax\ifhmode\unskip\space\fi MR }
\providecommand{\MRhref}[2]{%
  \href{http://www.ams.org/mathscinet-getitem?mr=#1}{#2}
}
\providecommand{\href}[2]{#2}

\end{document}